\definecolor{MidnightBlack}{rgb}{0.1,0.1,.34}
\definecolor{MidnightBlue}{rgb}{0.1,0.1,0.43}
\definecolor{Black}{rgb}{0,0, 0}
\definecolor{Blue}{rgb}{0, 0 ,1}
\definecolor{Red}{rgb}{1, 0 ,0}
\definecolor{White}{rgb}{1, 1, 1}
\definecolor{grey}{rgb}{.6, .6, .6}
\definecolor{Mygreen}{rgb}{.0, .7, .0}
\definecolor{Yellow}{rgb}{.55,.55,0}
\definecolor{Mustard}{rgb}{1.0, 0.86, 0.35}
\definecolor{applegreen}{rgb}{0.55, 0.71, 0.0}
\definecolor{darkturquoise}{rgb}{0.0, 0.81, 0.82}
\definecolor{celestialblue}{rgb}{0.29, 0.59, 0.82}
\definecolor{green_yellow}{rgb}{0.68, 1.0, 0.18}
\definecolor{crimsonglory}{rgb}{0.75, 0.0, 0.2}
\definecolor{darkmagenta}{rgb}{0.30, 0.0, 0.30}
\definecolor{magenta}{rgb}{0.50, 0.0, 0.50}
\definecolor{internationalorange}{rgb}{1.0, 0.31, 0.0}
\definecolor{darkorange}{rgb}{1.0, 0.55, 0.0}
\definecolor{ao}{rgb}{0.0, 0.5, 0.0}
\definecolor{awesome}{rgb}{1.0, 0.13, 0.32}
\definecolor{darkcyan}{rgb}{0.0, 0.50, 0.50}
\definecolor{violet}{rgb}{0.93, 0.51, 0.93}
\definecolor{brown}{rgb}{0.65, 0.16, 0.16}
\definecolor{orange}{rgb}{1.0, 0.65, 0.0}
\definecolor{cornflowerblue}{rgb}{0.39, 0.58, 0.93}
\definecolor{purpleMadness}{rgb}{0.41, 0.28, 0.69}
\renewcommand{\todo}[2][]{\tikzexternaldisable\@todo[#1]{#2}\tikzexternalenable}
\newcommand{\remove}[1]{}
\newcounter{func}
\newcommand{\funref}[1]{\hyperref[#1]{f_{\ref*{#1}}}} 
\tikzset{black node/.style={draw, circle, fill = black, minimum size = 5pt, inner sep = 0pt}}
\tikzset{white node/.style={draw, circlternary_treese, fill = white, minimum size = 5pt, inner sep = 0pt}}
\tikzset{normal/.style = {draw=none, fill = none}}
\tikzset{lean/.style = {draw=none, rectangle, fill = none, minimum size = 0pt, inner sep = 0pt}}
\tikzset{diam/.style={draw, diamond, fill = black, minimum size = 7pt, inner sep = 0pt}}
\tikzset{
	position/.style args={#1:#2 from #3}{
		at=($(#3)+(#1:#2)$)
	}
}
\tikzset{
  v:main/.style = {draw, circle, scale=0.8, thick,fill=black,inner sep=0.7mm},
  v:ghost/.style = {inner sep=0pt,scale=1},
  v:marked/.style = {circle, scale=1.3, fill=DarkGoldenrod,opacity=0.4},
  >={latex},
  e:main/.style = {line width=1pt}
}
\newcommand{\Fcal}{\mathcal{F}}
\newcommand{\Gcal}{\mathcal{G}}
\newcommand{\Hcal}{\mathcal{H}}
\newcommand{\Ocal}{\mathcal{O}}
\newcommand{\Zcal}{\mathcal{Z}}
\newcommand{\Nbbb}{\mathbb{N}}
\newcommand{\eqdef}{\stackrel{{\scriptsize\rm def}}{=}}
\definecolor{Red}{rgb}{1, 0 ,0}
\definecolor{Blue}{rgb}{0, 0 ,1}
\newtheorem{theorem}{Theorem}[section]
\newaliascnt{question}{theorem}
\newaliascnt{lemma}{theorem}
\newtheorem{lemma}[lemma]{Lemma}
\newaliascnt{claim}{theorem}
\newtheorem{claim}[claim]{Claim}
\newaliascnt{invariant}{theorem}
\newaliascnt{proposition}{theorem}
\newtheorem{proposition}[proposition]{Proposition}
\newaliascnt{observation}{theorem}
\newtheorem{observation}[observation]{Observation}
\newaliascnt{corollary}{theorem}
\newtheorem{corollary}[corollary]{Corollary}
\newaliascnt{definition}{theorem}
\newtheorem{definition}[definition]{Definition}
\newaliascnt{conjecture}{theorem}
\newaliascnt{counterexample}{theorem}
\newcommand{\hh}{\end{document}}
\newcommand{\p}{{\sf p}}
\newcommand{\cw}{{\sf cw}\xspace}%
\newcommand{\obs}{{\sf obs}}
\newcommand{\excl}{{\sf excl}}
\newcommand{\w}{{\sf w}}
\newcommand{\core}{\textsf{core}}
\newcommand{\dom}{\textsf{dom}}
\newcommand{\cobs}{\mbox{\rm \textsf{cobs}}}
\newcommand{\pobs}{\mbox{\rm \textsf{pobs}}}
\newcommand{\up}[2]{\mathsf{Up}_{#1}(#2)}
\newcommand{\down}[2]{\mathsf{Down}_{#1}(#2)}
\newcommand{\B}{\mathbb{B}}
\newcommand{\gall}{\mathcal{G}_{{\text{\rm  \textsf{all}}}}}
\newcommand{\gplanar}{\mathcal{G}_{\text{\rm  \textsf{planar}}}}
\newcommand{\gforest}{\mathcal{G}_{\text{\rm  \textsf{forest}}}}
\newcommand{\gtforest}{\mathcal{G}_{\text{\rm  \textsf{subcubic forest}}}}
\newcommand{\gouterplanar}{\mathcal{G}_{\text{\rm \textsf{outerplanar}}}}
\newcommand{\gfapex}{\mathcal{G}_{\text{\rm \textsf{apex forest}}}}
\newcommand{\nton}{\mathbb{N}\to\mathbb{N}\xspace}
\newcommand{\tw}{{\sf tw}\xspace}
\newcommand{\bipw}{{\sf bi\mbox{-}pw}\xspace}
\newcommand{\pw}{{\sf pw}\xspace}
\newcommand{\cupall}{{\pmb{\bigcup}}}
\newenvironment{cproof}{\proof[Proof of claim]}{\endproof}
\newcommand*\samethanks[1][\value{footnote}]{\footnotemark[#1]}
\newcommand{\FPT}{\textsf{FPT}\xspace}
\newcommand{\numen}[1]{\ifthenelse{\not\equal{#1}{1}}{#1}{}}
\definecolor{vagelisColour}{RGB}{0, 65, 130}
\newcommand{\closure}[2]{{\downarrow}_{#1}{#2}}
\newcommand{\upclosure}[2]{{\uparrow}_{#1}{#2}}
\newcommand{\m}[2]{{\sf min}_{#1}(#2)}
\newcommand{\idl}[2]{{\sf Idl}_{#1}(#2)}
\newcommand{\UNB}{\mathbb{U}}
\newcommand{\REC}{{\sf REC}}
\newcommand{\yes}{{\sf yes}}
\begin{document}

\title{Graph Parameters, Universal Obstructions, and WQO\thanks{All authors are supported by the French-German Collaboration ANR/DFG Project UTMA (ANR-20-CE92-0027). The third author is also  supported  by the MEAE and the MESR, via the Franco-Norwegian project PHC Aurora project n. 51260WL (2024).}}

\author{ 
Christophe Paul\thanks{LIRMM, Univ Montpellier, CNRS, Montpellier, France.}
\and 
Evangelos Protopapas\samethanks[2]
\and
Dimitrios  M. Thilikos\samethanks[2]
}
\date{}

\maketitle
\thispagestyle{empty}

\begin{abstract} 
\noindent 
We establish a parametric framework for obtaining obstruction characterizations of graph parameters with respect to a quasi-ordering $\leqslant$ on graphs.
At the center of this framework lies the concept of a \textsl{$\leqslant$-parametric graph}: a non $\leqslant$-decreasing sequence $\mathscr{G} = \langle \mathscr{G}_{t} \rangle_{t \in \Nbbb}$ of graphs indexed by non-negative integers.
Parametric graphs allow us to define combinatorial objects that capture the approximate behaviour of graph parameters.
A finite set $\mathfrak{G}$ of $\leqslant$-parametric graphs is a \textsl{$\leqslant$-universal obstruction} for a parameter $\mathsf{p}$ if there exists a function $f \colon \Nbbb \to \Nbbb$ such that, for every $k \in \Nbbb$ and every graph $G$, 1) if $\p(G) \leq k$, then for every $\mathscr{G} \in \mathfrak{G},$ $\mathscr{G}_{f(k)} \not\leqslant G$, and 2) if for every $\mathscr{G} \in \mathfrak{G},$ $\mathscr{G}_{k} \not\leqslant G$, then $\p(G) \leq f(k).$
To solidify our point of view, we identify sufficient order-theoretic conditions that guarantee the existence of universal obstructions and in this case we examine algorithmic implications on the existence of fixed-parameter tractable algorithms.
Our parametric framework has further implications related to finite obstruction characterizations of properties of graph classes.
A \textsl{$\leqslant$-class property} is defined as any set of $\leqslant$-closed graph classes that is closed under set inclusion.
By combining our parametric framework with established results from order theory, we derive a precise order-theoretic characterization that ensures $\leqslant$-class properties can be described in terms of the exclusion of a finite set of $\leqslant$-parametric graphs.
\end{abstract}

\medskip
\noindent{\bf Keywords:} Graph Parameters; Parametric Graphs; Universal Obstructions; Well-Quasi-Ordering;

\newpage
\thispagestyle{empty}
\tableofcontents

\newpage

\section{Introduction}
\label{intr}

This study is motivated by the fact that many graph-theoretic properties admit finite characterizations that may lead to efficient recognition algorithms. 
A classic example is planarity: a graph is \emph{planar} if it can be embedded in the plane, i.e., it can be drawn in the plane in a way such that no two of its edges cross.
Certainly, there are infinitely many planar graphs, while the definition of graph planarity is ``purely topological''.
Nevertheless, it appears that the class of planar graphs admits a \textsl{finite} characterization due to the theorem of Kuratowski \cite{Kuratowski37surl}:
\begin{eqnarray}
\begin{minipage}{14cm}
\centering
\textsl{A graph is planar if and only if it excludes $K_{5}$ and $K_{3,3}$ as topological minors\footnote{A graph $H$ is a \emph{subdivision} of a graph $G$ if it can be obtained from $G$ by replacing edges with non-trivial paths. A graph $H$ is a topological minor of a graph $G$ if $G$ contains a subgraph that is isomorphic to some subdivision of $H$.}},
\end{minipage}\label{kuratowski_theorem}
\end{eqnarray}
where $K_{5}$ is the complete graph on five vertices, and $K_{3,3}$ is the complete bipartite graphs with three vertices in each part.

The finite characterization of \eqref{kuratowski_theorem} can be used to design polynomial-time planarity testing algorithms and provide an efficient certificate of non-planarity.
Whether it is possible to fully characterize some infinite graph class by the exclusion (under some particular relation on graphs) of a \textsl{finite} set of graphs is an important challenge in graph theory and has significant algorithmic implications.

\paragraph{Well-quasi-ordering and graph algorithms.}
Towards formalizing the above, let $\leqslant$ be a \textsl{quasi-ordering} relation on graphs and let $\mathcal{G}$ be a graph class.
We say that $\mathcal{G}$ is $\leqslant$-\emph{closed} if for every two graphs $H$ and $G$, $H \leqslant G$ and $G \in \mathcal{G}$ imply $H \in \mathcal{G}.$
The $\leqslant$-\emph{obstruction set} of $\mathcal{G}$, denoted by $\obs_{\leqslant}(\mathcal{G})$, is defined as the set of $\leqslant$-minimal graphs not in $\mathcal{G}$.

Now, consider a graph class $\mathcal{G}$ and a quasi-ordering $\leqslant$ on graphs that satisfy the following three conditions:
\begin{itemize}
\item[\textbf{A}.] $\mathcal{G}$ is $\leqslant$-closed,
\item[\textbf{B}.] $\leqslant$ is a well-quasi-ordering on all graphs, and
\item[\textbf{C}.] there exists a constant $c$ and an algorithm that, given a graph $G$ outputs whether $H \leqslant G$ in time $\Ocal(|G|^{c})$, for a fixed graph $H$.
\end{itemize}

The above conditions directly imply that there exists an algorithm able to check whether $G \in \mathcal{G}$ in time $O(|G|^{c}).$
This follows from the fact that, $G \in \mathcal{G}$ if and only if for every $H \in \obs_{\leqslant}(\mathcal{G})$ (because of \textbf{A}) and that the set $\obs_{\leqslant}(\mathcal{G})$ is finite (because of \textbf{B}).
It therefore suffices to apply the algorithm of \textbf{C} a finite number of times in order to check whether the input graph contains any of the graphs in the $\leqslant$-obstruction set of $\mathcal{G}$.

The most celebrated quasi-orderings on graphs that are known to be well-quasi-orderings on all graphs are the \textsl{minor} relation and the \textsl{immersion} relation.
Given two graphs $H$ and $G$, we say that $H$ is a \emph{minor} (resp. \emph{immersion}) of $G$ if a graph isomorphic to $H$ can be obtained from a subgraph of $G$ after contracting (resp. lifting\footnote{See \autoref{gt_conc} for the definition of a contraction and \autoref{immersion_subsection} for the definition of lifting.}) edges and we denote this by $H \leqslant_{\mathsf{m}} G$ (resp. $H \leqslant_{\mathsf{i}} G$).
The fact that $\leqslant_{\mathsf{m}}$ is a wqo on all graphs is known as the Robertson-Seymour theorem and was proved by Robertson and Seymour in the 20th paper of their Graph Minors series \cite{RobertsonS04GraphMinorsXX}.
Also the fact that $\leqslant_{\mathsf{i}}$ is a wqo was proved in the 23th paper of the same series \cite{RobertsonS10GraphminorsXXIII}.
Therefore, condition~\textbf{B} holds for both relations.
Also, algorithms for condition \textbf{C} for $\leqslant_{\mathsf{m}}$ (resp. $\leqslant_{\mathsf{i}}$) were given in \cite{RobertsonS95b,KawarabayashiKR12Thedisjoint} (resp. \cite{GroheKMW11Findingtopological}). 
This directly implies that every minor-closed (resp. immersion-closed) graph class admits a finite characterization and a polynomial recognition algorithm.

Interestingly, the above -- wqo based -- reasoning appears to be inherently non-constructive as the implied algorithm requires the ``knowledge'' of the obstruction set.
According to Friedman, Robertson, and Seymour~\cite{FriedmanRS87them}, the bounded\footnote{In the statement of the ``bounded version'' of the Graph Minor theorem, the graphs are restricted to have bounded treewidth (see \autoref{bicpw} for the definition of treewidth).} version of the Robertson-Seymour theorem is equivalent to the extended version of Kruskal's theorem~\cite{Kruskal60well} that is proof-theoretically stronger than $\Pi^1_1$-CA$_{0}$ (for the meta-mathematics of the Robertson-Seymour theorem, see~\cite{FriedmanRS87them} as well as the more recent work of Krombholz and Rathjen~\cite{krombholz2019upper}).
Hence, we cannot hope for proofs of the above wqo results that also give an indication as to how we can \textsl{construct} the corresponding obstruction set.
From the algorithmic point of view, Fellows and Langston~\cite{FellowsL88nonc} proved by a reduction from the \textsc{Halting} problem, that there is no algorithm that given a finite description of a minor-closed class outputs its obstruction set.
Analogous implications can be made for the immersion relation as well.

The above results, besides (but also because of) their non-constructive nature, strongly motivated the development of algorithmic graph theory. 
Indeed, ``knowing'' that an algorithm ``exists'' certainly makes it meaningful (and somehow easier) to try to design one. 
For this, a long line of research was dedicated to the identification of $\obs_{\leqslant}(\mathcal{G})$ for particular instantiations of $\leqslant$ and $\mathcal{G}.$ 
For a small sample of such results, see \cite{LeivaditisSSTTV20mino,Holst02onth,RobertsonST95sach,BodlaenderThil99,Thilikos00,ArnborgPC90forb,Archdeacon06akur,DinneenX02mino,KinnersleyL94,FioriniHJV17thee,RobertsonST95sach} for the minor relation, \cite{GiannopoulouPRT21LinearKernels,BelmonteHKPT13Characterizing} for the immersion relation, and \cite{AdlerFP14Obstructions,Bouchet94circl,KaminskiPT11Contracting} for other relations.
In a more general direction, several researchers considered additional conditions, mostly related to logic, that can
guarantee the computability of obstruction sets \cite{SauST21apices,FellowsL94onse,LagergrenA91mini,Lagergren91anup,Lagergren98,GuptaI97boun,AbrahamsonF93,AdlerGK08comp,CourcelleDF97,Bienstock95,FellowsL88nonc,ArnborgPS90tree,DowneyF95survey}. 

We should stress that a great part of the theory of parameterized algorithms has been motivated by the results mentioned above.
In many cases, to overcome (or bypass) the non-constructibility obstacle has been a challenge that motivated the introduction of new algorithmic techniques \cite{CyganFKLMPPS15para}.

\medskip
As we mentioned above, if $\leqslant$ is a wqo on all graphs, then every $\leqslant$-closed graph class admits a finite characterization.
The question addressed in this paper is whether (and to what extent) we can finitely characterize ``second-order'' properties, i.e., properties on graph classes rather than on graphs, in terms of a suitable concept of ``second-order'' obstructions.
Formally, we call every inclusion-closed set $\mathbb{CP}$ of $\leqslant$-closed classes a \emph{$\leqslant$-class property}.

Towards this, we introduce a parametric framework based on two central concepts, \textsl{graph parameters} and \textsl{$\leqslant$-parametric graphs}.
A \emph{graph parameter} is a function $\p$ mapping graphs to non-negative integers and a \emph{$\leqslant$-parametric graph} is a $\leqslant$-monotone sequence of graphs $\mathscr{G} \coloneqq \langle \mathscr{G}_{t} \rangle_{t \in \mathbb{N}},$ i.e, for every $t \in \mathbb{N},$ $\mathscr{G}_{t} \leqslant \mathscr{G}_{t+1}.$

\paragraph{Universal obstructions for graph parameters.}

A celebrated result of Robertson and Seymour \cite{RobertsonS86GMV} is the \textsl{Grid Theorem} which certifies the existence of a ``canonical'' type of minor one should expect to find in graphs of large treewidth: \textsl{there exists a function $f \colon \Nbbb \to \Nbbb$ such that for every graph $G$ and every integer $k$, if $G$ has treewidth at least $f(k)$ then $G$ contains the $(k \times k)$-grid\footnote{The $(t \times t)$-grid $\Gamma_{k}$ is defined as the Cartesian product of two paths on $t$ vertices.} $\Gamma_{k}$ as a minor.}
This, paired with the fact that the treewidth of the $(k \times k)$-grid $\Gamma_{k}$ is $k$ (see e.g. \cite{Bodlaender98}), implies that the treewidth of a graph $G$ is ``large'' if and only if $G$ contains a ``large'' grid as a minor.\footnote{We refer the reader to \autoref{bicpw} for a definition of treewidth and an in-depth discussion on the canonical obstruction to treewidth.}
In this sense, the sequence of grids can be viewed as a ``canonical'' obstruction to treewidth, providing the first insight into formalizing the concept of ``second-order'' obstructions.

The previous discussion leads us to the concept of $\leqslant$-parametric graphs.
Certainly the sequence $\Gamma \coloneqq \langle \Gamma_{t} \rangle_{t \in \Nbbb}$ is a minor-parametric graph since we can easily observe that any grid is a minor of a larger grid.
Moreover, every $\leqslant$-parametric graph $\mathscr{H}$ may act as a ``regular'' characterization of a $\leqslant$-closed class defined as the \emph{$\leqslant$-closure} of $\mathscr{H}$ which consists of every graph $H$ such that $H \leqslant \mathscr{H}_{t},$ for some non-negative integer $t$.
Indeed, in the case of planar graphs, every grid is a planar graph and it is known that every planar graph is a minor of a (polynomially) large enough grid.
Therefore, the class of planar graphs corresponds to the minor-closure of $\Gamma.$
We call a set $\mathfrak{G}$ of $\leqslant$-parametric graphs a \emph{$\leqslant$-parametric family} if the classes that correspond to the $\leqslant$-closures of each $\leqslant$-parametric graph in $\mathfrak{G}$ are pairwise incomparable with respect to inclusion.
Moreover, given a \textsl{$\leqslant$-monotone}\footnote{A graph parameter $\mathsf{p}$ is \emph{$\leqslant$-monotone} if for every two graphs $H$ and $G,$ $H \leqslant G$ implies $\mathsf{p}(H) \leq \mathsf{q}(G).$} graph parameter $\p$ we say that a $\leqslant$-parametric family $\mathfrak{G}$ is a \emph{$\leqslant$-universal obstruction} for $\p$ if there exists a function $f \colon \Nbbb \to \Nbbb$ such that for every graph $G$ for every $k \in \Nbbb,$ 1) if $\p(G) \leq k$, then for every $\mathscr{G} \in \mathfrak{G},$ $G$ excludes $\mathscr{G}_{f(k)}$ as a minor and 2) if for every $\mathscr{G} \in \mathfrak{G},$ $\mathscr{G}_{k} \not\leqslant G$, then $\p(G) \leq f(k)$ (see \autoref{def:univ_obs}).
It is well-known that treewidth is minor-monotone.
Therefore, a result of the previous discussion is that in this terminology, the minor-parametric family $\{ \Gamma \}$ is a minor-universal obstruction for treewidth.

The first essential question towards establishing a theory of universal obstructions for graph parameters is the following.
Does every $\leqslant$-monotone parameter admit a finite $\leqslant$-universal obstruction?
In \autoref{chakmrkfinfoerobes} we prove a sufficient order-theoretic condition for this (\autoref{omega2_finite_univ_obs}): ``\textsl{if the set of all graphs is \emph{$\omega^{2}$-well-quasi-ordered}\footnote{We refer the reader to \autoref{prelim_order_theory} for a definition of $\omega^{2}$-well-quasi-ordering.} by $\leqslant$, then every $\leqslant$-monotone parameter admits a finite $\leqslant$-universal obstruction}''.
We shall not define the concept of an $\omega^{2}$-well-quasi-ordering here however we remark that it may be seen as a ``second-order'' lifting of well-quasi-ordering.

\paragraph{Class properties and graph parameters.}

We say that a graph parameter $\p$ is \emph{bounded} in a graph class $\Gcal$ if there exists a non-negative integer $c$ such that for every graph $G \in \Gcal,$ $\p(G) \leq c.$

As a consequence of the grid theorem \cite{RobertsonS86GMV}, Robertson and Seymour established the first link between graph parameters and class properties by providing an equivalent description of the \textsl{Erd\H{o}s-P{\'o}sa property}\footnote{A minor-closed class $\mathcal{H}$ has the Erd\H{o}s-P{\'o}sa property if there exists a function $f \colon \Nbbb \to \Nbbb$ such that for every graph $G$, either $G$ contains $k$ pairwise vertex-disjoint copies of a minor-obstruction to $\Hcal$ as a minor or there exists a set $S \subseteq V(G)$ of size at most $f(k)$ such that $G - S \in \mathcal{H}$.} for minor-closed classes as the minor-class property that consists of all minor-closed classes of bounded treewidth.
This result provides a natural candidate of a ``second-order'' obstruction for the Erd\H{o}s-P{\'o}sa property which is none other that a universal obstruction for treewidth: ``\textsl{a minor-closed class $\Gcal$ has the Erd\H{o}s-P{\'o}sa property if and only if it it does not contain the $(k \times k)$-grid $\Gamma_{k},$ for some non-negative integer $k.$}''

This leads us define a notion of ``representativity'' of $\leqslant$-class properties in terms of $\leqslant$-parametric families: a $\leqslant$-class property $\mathbb{CP}$ is \emph{represented} via a $\leqslant$-parametric family $\mathfrak{G}$ if a $\leqslant$-closed class $\Gcal$ belongs to $\mathbb{CP}$ if and only if for every $\mathscr{G} \in \mathfrak{G}$, $\Gcal$ does not contain $\mathscr{G}_{t},$ for some non-negative integer $t$.
Let us argue why we can see $\leqslant$-parametric families as obstructing notions for $\leqslant$-class properties.
An alternative way to understand the previous definition is that 1) for every $\mathscr{G} \in \mathfrak{G}$ the $\leqslant$-closure of $\mathscr{G}$ does not belong to $\mathbb{CP}$ and 2) there does not exists a $\mathscr{G} \in \mathfrak{G}$ such that the $\leqslant$-closure of $\mathfrak{G}$ is a subclass of any $\leqslant$-closed class $\Gcal$ that belongs to $\mathbb{CP}.$
In this light, the set of $\leqslant$-closed classes defined as the $\leqslant$-closures of each $\leqslant$-parametric graph in $\mathfrak{G}$ acts as a finite ``class obstruction set'' of $\mathbb{CP}$ with respect to set inclusion.

The natural question to ask now is: can every $\leqslant$-class property be represented via $\leqslant$-parametric families?
In \autoref{chakmrkfinfoerobes}, as a consequence of our study of universal obstructions for graph parameters, combined with established results in order theory, we derive a necessary and sufficient condition to this question (\autoref{final_result}): ``Assuming condition \textbf{B}, the set of all graphs is $\omega^{2}$-well-quasi-ordered by $\leqslant$ if and only if every $\leqslant$-class property can be expressed as the union of a finite set of $\leqslant$-class properties, each representable via a finite $\leqslant$-parametric family''.
Hereafter we refer to this condition as: ``a $\leqslant$-classed property is expressed via a finite set of finite $\leqslant$-parametric families''.

\paragraph{Algorithmic consequences of the parametric viewpoint.}

In light of the previous results, and assuming that the $\omega^{2}$-well-quasi-ordering conjecture holds, the parametric viewpoint provides a framework in pursuit of finite obstruction characterizations of class properties.
Essentially, this approach hinges on the proper identification of a set of $\leqslant$-parametric families, consisting of $\leqslant$-parametric graphs that effectively obstruct membership in the given class property.
The next question is to what extent these characterizations lead to efficient algorithms for deciding membership in class properties.

\medskip
To properly formulate this problem we define $\leqslant$-closed classes in terms of the exclusion of a finite set of graphs.
Given a finite set $\Zcal$ of graphs, we define $\excl_{\leqslant}(\mathcal{Z})$ to be the class of \emph{$\Zcal$-$\leqslant$-free} graphs, i.e., the $\leqslant$-closed class that consists of the graphs excluding every graph in $\Zcal$ with respect to $\leqslant$.
We consider the problem \textsc{Membership in class property $\mathbb{CP}$} that asks, given a finite set $\Zcal$ of graphs, whether the class of $\Zcal$-$\leqslant$-free graphs belongs to $\mathbb{CP}.$
It turns out that, if $\mathbb{CP}$ is representable by a finite set $\mathbf{G}$ of finite $\leqslant$-parametric families, then \textsc{Membership in class property $\mathbb{CP}$} again reduces to a finite number of calls to the algorithm of condition $\textbf{C}.$
To see this, note that a $\leqslant$-closed class $\Gcal \in \mathbb{CP}$ if and only if there exists $\mathfrak{G} \in \mathbf{G}$ such that for every $\mathscr{G} \in \mathfrak{G},$ $\Gcal$ does not contain the $\leqslant$-closure of $\mathscr{G},$ which can be phrased in terms of $\leqslant$-containment, given the $\leqslant$-obstruction sets for both $\Gcal$ and the $\leqslant$-closure of $\mathscr{G}.$
These algorithmic consequences are the subject of \autoref{algorithms_section}.
It is an interesting open question to develop techniques that, given the knowledge of $\mathbf{G}$, give constructive algorithms for \textsc{Membership in class property $\mathbb{CP}$} that do not require the knowledge of the $\leqslant$-obstruction sets.

\paragraph{Parameterized algorithms.}

Consider now the problem $Π_{\p}$ that, given a graph $G$ and some non-negative integer $k,$ asks whether $\p(G)≤k.$ 
A general algorithmic problem is to derive a fixed parameter tractable algorithm (in short, an \FPT-algorithm) for $Π_{\p},$ that is an algorithm running in time $O(f(k)\cdot n^{c}),$ for some fixed constant $c.$
The design of an algorithm for $Π_{\p}$ for distinct instantiations of $\p$ is one of the most important running challenges in parameterized algorithm design \cite{Thilikos12GraphMinors,CyganFKLMPPS15para}.

Notice now that the problem $Π_{\p}$ is equivalent to the problem of deciding membership in $\Gcal_{\p,k},$ for each possible $k.$ 
This implies that, if $\p$ is $\leqslant$-monotone, $\leqslant$ satisfies condition \textbf{B}, which implies that $\obs_{\leqslant}(\Gcal_{\p,k})$ is finite for every choice of $k$, and condition \textbf{C}, then $Π_{\p}$ admits a (non-constructive)  \FPT-algorithm.
Clearly, to make this algorithm constructive we need to know $\obs_{\leqslant}(\Gcal_{\p,k}),$ for every $k\in\mathbb{N},$ or at least some bound -- as a computable function of $k$ -- on the size of its graphs.
Typically, such bounds are highly non-trivial to prove and, when they exist, they are huge (see \cite{SauST21apices,LagergrenA91mini,Lagergren98,AdlerGK08comp} for examples of such bounds) and imply \FPT-algorithms with huge parametric dependences.
Given the knowledge of a universal obstruction for $\p$ and given that the ``gap'' of its asymptotic equivalence with $\p$ is reasonable, this may instead lead to an \FPT-approximation algorithm with  better parametric dependence than the exact one.  
The potential of deriving  \FPT-approximation for graph parameters, given some suitable choice of a universal obstruction is examined in \autoref{algorithms_section}.

\paragraph{Examples.}

To exemplify the introduced notions we use as running examples throughout the paper the graph parameter \textsl{pathwidth} and a variant of it, namely the \textsl{biconnected pathwidth}\footnote{See \autoref{gt_conc} for the definitions.}.
Both of these parameters admit universal obstructions.
The choice of biconnected pathwidth is based on the fact that its universal obstructions contain more than one sequence.
The same phenomenon has been observed for many other graph parameters for which similar approximate characterizations are known.
In \autoref{examples_section} we present some indicative examples of graph parameters and the corresponding results using the unifying terminology and notation that we introduce in this paper. 
We present parameters that are minor-monotone and immersion-monotone.
For an extended survey on graph parameters and their obstructions, see \cite{paul2023universal}.

\section{Background concepts}\label{prelim}

In this section we shall present notions from order theory and graph theory that are necessary for the developments in the following sections.
Most of the required order theoretic terminology can be found in~\cite{SchmitzS12Algorithmic} (see also~\cite{Rival85Graphs, Harzheim05Ordered, Marcone01Fine}).
For any undefined terminology on graph theory we refer the reader to \cite{diestel2016graph}.

\subsection{Order theory}\label{prelim_order_theory}

\paragraph{Binary relations.}

A \emph{binary relation} $\leqslant$ on a given set $X$ is a subset of the set $X \times X.$
We write $x \leqslant y$ for any pair $\langle x, y \rangle$ that is in $\leqslant$.
A binary relation $\leqslant$ is called \emph{reflexive} if for every element $x$ in $X$ we have that $x \leqslant x,$ is called \emph{transitive} if for every three elements $x,$ $y,$ and $z$ in $X$ if $x \leqslant y$ and $y \leqslant z$ then $x \leqslant z$, is called \emph{antisymmetric} if for every two elements $x$ and $y$ in $X$ if $x \leqslant y$ and $y \leqslant x$ then $x = y,$ and is called \emph{symmetric} if for every two elements $a$ and $b$ in $X$ if $a \leqslant b$ then $b \leqslant a.$

\paragraph{Quasi-orderings, partial-orderings, and equivalence relations.}

A binary relation $\leqslant$ on a set $X$ is a \emph{quasi-ordering (qo)} if it is reflexive and transitive. 
A quasi-ordering is a \emph{partial-ordering (po)} if it is also antisymmetric and it is an \emph{equivalence relation} if it also symmetric.

Every quasi-ordering $\leqslant$ on a set $X$ naturally induces an equivalence relation on $X$ which we denote by $\equiv_{\leqslant}$: $x \equiv_{\leqslant} y$ if $x \leqslant y$ and $y \leqslant x.$
Moreover, to every quasi-ordering $\leqslant$ on a set $X$ corresponds a canonical partial-ordering on the equivalence classes of $\equiv_{\leqslant}$.

\paragraph{Well-founded quasi-orderings.}

Hereafter, given a quasi-ordering $\leqslant$ on a set $X,$ we use the notation $(X, \leqslant).$
A quasi-ordering $(X, \leqslant)$ is \emph{well-founded} if for every non-empty subset $A$ of $X$ there exists a $\leqslant$-\emph{minimal element} of $A,$ i.e., an element $x$ in $A$ such that for every element $y$ in $A,$ if $y \leqslant x$ then $x \leqslant y$.

A subset $A$ of $X$ is a $\leqslant$-\emph{antichain} of $X$ if it consists of pairwise $\leqslant$-\emph{incomparable} elements, i.e., for every two elements $a$ and $b$ in $A$ we have that $a \not\leqslant b$ and $b \not\leqslant a.$
Observe that in a well-founded quasi-ordering $(X, \leqslant)$ for every subset $A$ of $X$ there exists a $\leqslant$-antichain $A_{0}$ that is a subset of $A$ such that for every element $x$ in $A$ there exists an element $y$ in $A_{0}$ such that $y \leqslant x.$
We call any such $\leqslant$-antichain $A_{0}$ a $\leqslant$-\emph{minimization} of $A.$
Observe that if moreover $(X, \leqslant)$ is a partial-ordering then for every subset $A$ of $X$ its $\leqslant$-minimization is a uniquely defined set that consists of the $\leqslant$-minimal elements of $A.$
In this setting, we denote the $\leqslant$-minimization of $A$ by $\m{\leqslant}{A}.$

\paragraph{Well-quasi-orderings.}

A quasi-ordering $(X, \leqslant)$ is a \emph{well-quasi-ordering (wqo)} if it is well-founded and $X$ contains no infinite $\leqslant$-antichains.
A well-quasi-ordering is a \emph{well-partial-ordering (wpo)} if it is also antisymmetric.

\paragraph{Dilworth's theorem.}

Now, let $(X, \leqslant)$ be a quasi-ordering.
We call the size of the largest $\leqslant$-antichain of $X$ the \emph{width} of $(X, \leqslant).$
Moreover, we call any non-decreasing sequence $x_{1} \leqslant x_{2} \leqslant x_{3} \ldots$ of elements of $X$ a $\leqslant$-\emph{chain} of $X.$
A classic result in order theory by Dilworth relates infinite partial-orderings of fixed finite width with a partition of its universe into as many chains.

\begin{theorem}[Dilworth's Theorem]\label{dilworth} Let $(X, \leqslant)$ be a partial-ordering.
Then $(X, \leqslant)$ has finite width $w$ if and only if it can be partitioned into $w$ many $\leqslant$-chains.
\end{theorem}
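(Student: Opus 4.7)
The reverse direction is straightforward: if $\Omega$ decomposes as a disjoint union of $w$ $\leq$-chains $C_1,\ldots,C_w$, then any $\leq$-antichain can contain at most one element from each $C_i$ (elements in the same chain are comparable), so the width is at most $w$.

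For the forward direction, the plan is to lift the classical finite Dilworth theorem---every finite poset of width $w$ admits a partition into $w$ chains---to the infinite setting via a compactness argument. The starting observation is that for every finite $F \subseteq \Omega$ the induced poset $(F,\leq|_F)$ has width at most $w$, since any antichain in $F$ is also an antichain in $\Omega$. Applying the finite version yields a coloring $\chi_F : F \to \{1,\ldots,w\}$ whose monochromatic classes are $\leq$-chains.

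To glue these local colorings into a global one, I would equip the compact product space $X = \{1,\ldots,w\}^{\Omega}$ with the pointwise topology and, for every finite $F \subseteq \Omega$, set
\[
K_F = \{\chi \in X : \chi^{-1}(i) \cap F \text{ is a } \leq\text{-chain for every } i \in \{1,\ldots,w\}\}.
\]
Each $K_F$ is closed (its membership depends only on the finitely many coordinates in $F$) and non-empty (take any extension of $\chi_F$ to $\Omega$), and the family $\{K_F\}_F$ has the finite intersection property since $K_{F_1} \cap K_{F_2} \supseteq K_{F_1 \cup F_2}$. By Tychonoff's theorem, $\bigcap_F K_F \neq \emptyset$, and any $\chi$ in this intersection produces a partition of $\Omega$ into at most $w$ chains $\chi^{-1}(1),\ldots,\chi^{-1}(w)$. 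To ensure that exactly $w$ non-empty chains appear, I would invoke the assumption that the width equals $w$: picking an antichain $\{a_1,\ldots,a_w\}$ of that size forces its elements into pairwise distinct classes of $\chi$, so no class is empty.

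The main obstacle is the compactness step itself, which rests on the finite Dilworth theorem (standard, provable either by induction or as a consequence of K\"onig's theorem on bipartite matchings) combined with Tychonoff on a product of finite discrete spaces. In the countable case, the same conclusion can be obtained more concretely via K\"onig's lemma applied to the finitely-branching tree of valid partial colorings on finite prefixes of an enumeration of $\Omega$, yielding a purely combinatorial argument that sidesteps product-space compactness altogether.
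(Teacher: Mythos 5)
Your proof is correct, and it follows the standard textbook route of lifting the finite Dilworth theorem to the infinite case via Tychonoff-compactness of the product $\{1,\dots,w\}^{\Omega}$ (with the remark about K\"onig's lemma in the countable case being a fine alternative). For the record, the paper itself does not prove this theorem: it states it as a known background result and immediately uses it (in the proof of the lemma on rational sequences), so there is no in-paper proof to compare against. Two minor remarks on your write-up. First, in the reverse direction you only conclude width at most $w$, which is all that can be concluded from the existence of some partition into $w$ chains; the ``exactly $w$'' phrasing in the theorem is best read as ``the width equals the minimum number of chains in a chain cover,'' and you handle the matching equality in the forward direction by using a size-$w$ antichain to certify that all $w$ colour classes are occupied. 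Second, to close the loop you should spell out (as you implicitly do) that for any $\chi$ in $\bigcap_F K_F$ and any two points $x,y$ with $\chi(x)=\chi(y)$, applying membership in $K_{\{x,y\}}$ shows $x$ and $y$ are comparable, so each colour class is indeed a chain. With those points made explicit the argument is complete.
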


\paragraph{Closed sets.}

Next, let $(X, \leqslant)$ be a quasi-ordering.
Given a subset $A$ of $X$ we define the \emph{upward $\leqslant$-closure} (resp. \emph{downward $\leqslant$-closure}) of $A$ in $X$ as the set
$$\text{$\upclosure{\leqslant}{A} \coloneqq \{ x \in X \mid \exists y \in A : y \leqslant x \}$ (resp. $\closure{\leqslant}{A} \coloneqq \{ x \in X \mid \exists y \in A : x \leqslant y \}$).}$$

When $A = \upclosure{\leqslant}{A}$ (resp. $A = \closure{\leqslant}{A}$) we say that $A$ is \emph{upward} $\leqslant$-\emph{closed} (resp. \emph{downward} $\leqslant$-\emph{closed}) in $X$.
We denote by $\up{\leqslant}{X}$ (resp. $\down{\leqslant}{X}$) the set of all upward $\leqslant$-closed (resp. downward $\leqslant$-closed) subsets of $X$.
The following proposition gives an equivalent characterization of a well-quasi-ordering by lifting the required condition to the set of all downward-closed sets being well-founded by the standard inclusion relation on sets.

\begin{proposition}[\!\! \cite{SchmitzS12Algorithmic}]\label{wqo_wellfouned} Let $(X, \leqslant)$ be a quasi-ordering.
Then $(X, \leqslant)$ is a well-quasi-ordering if and only if $(\down{\leqslant}{X}, \subseteq)$ is well-founded.
\end{proposition}

\paragraph{Bases of upward closed sets.}

Let $(X, \leqslant)$ be a quasi-ordering.
Consider an upward $\leqslant$-closed subset $U \in \up{\leqslant}{X}$ of $X.$
A $\leqslant$-antichain $U_{0}$ such that $U = \upclosure{\leqslant}{U_{0}}$ is called a $\leqslant$-\emph{basis} of $U$ in $X$.
In the case that $(X, \leqslant)$ is a partial-ordering, for every upward $\leqslant$-closed subset $U$ of $X$ that admits a $\leqslant$-basis $U_{0}$ we have that $U_{0} = \m{\leqslant}{U}.$
If moreover $(X, \leqslant)$ is well-founded, then every upward $\leqslant$-closed subset $U$ of $X$ admits a $\leqslant$-basis $U_{0}$ and if moreover $(X, \leqslant)$ is well-quasi-ordered, $U_{0}$ is finite.

The finite representation of upward $\leqslant$-closed subsets via their $\leqslant$-bases is called the \emph{finite basis property} of a well-quasi-ordering
$(X, \leqslant).$
This property can also serve as a finite representation of downward $\leqslant$-closed subsets in terms of excluding a finite $\leqslant$-basis of its complement: every downward $\leqslant$-closed subset $D \in \down{\leqslant}{X}$ of $X$ is equal to $X \setminus \upclosure{\leqslant}{U_{0}},$ where $U_{0}$ is a $\leqslant$-basis of the upward $\leqslant$-closed subset $X \setminus D.$

\paragraph{Ideals.}

There is another way to finitely represent downward $\leqslant$-closed sets in a well-quasi-ordering $(X, \leqslant).$
A non-empty subset $A$ of $X$ is \emph{$\leqslant$-directed} if for every two elements $a$ and $b$ in $A$ there exists an element $z$ in $A$ such that $x \leqslant z$ and $y \leqslant z.$
A downward $\leqslant$-closed subset of $X$ that is $\leqslant$-directed is called a $\leqslant$-\emph{ideal} of $X.$
We write $\idl{\leqslant}{X}$ for the set of all $\leqslant$-ideals of $X.$

In well-quasi-ordering theory $\leqslant$-ideals are what we call \emph{irreducible subsets}: for every $\leqslant$-ideal $I,$ if $I \subseteq D_{1} \cup D_{2},$ where both $D_{1}$ and $D_{2}$ are downward $\leqslant$-closed subsets of $X,$ then either $I \subseteq D_{1}$ or $I \subseteq D_{2}.$
Exploiting the irreducibility property of $\leqslant$-ideals one can show that in a well-quasi-ordering every downward $\leqslant$-closed set uniquely corresponds to a finite set of $\leqslant$-ideals as follows.

\begin{proposition}[\!\! \cite{SchmitzS12Algorithmic}]\label{finite_closed_set_decomposition}
Let $(X, \leqslant)$ be a well-quasi-ordering.
Then every downward $\leqslant$-closed subset of $X$ is equal to the union of a unique finite inclusion-antichain of $\leqslant$-ideals of $X.$
\end{proposition}

In fact more is known.

\begin{proposition}[Lemma 5.3, \cite{KabilP92Une}]\label{ideal_decomposition_implies_wqo}
Let $(X, \leqslant)$ be a quasi-ordering.
Then, every downward $\leqslant$-closed subset of $X$ is equal to the union of a finite set of $\leqslant$-ideals of $X$ if and only if $(X, \leqslant)$ contains no infinite $\leqslant$-antichains.
\end{proposition}

As a result the following two propositions hold.

\begin{proposition}[\!\! \cite{SchmitzS12Algorithmic}]\label{countable_wqo}
Let $(X, \leqslant)$ be a countable well-quasi-ordering.
Then $\down{\leqslant}{X},$ $\up{\leqslant}{X}$ and $\idl{\leqslant}{X}$ are countable.
\end{proposition}

\begin{proposition}[\!\! \cite{SchmitzS12Algorithmic}]\label{idl_wqo_down_wqo}
Let $(X, \leqslant)$ be a well-quasi-ordering.
Then $(\idl{\leqslant}{X}, \subseteq)$ is a well-quasi-ordering if and only if $(\down{\leqslant}{X}, \subseteq)$ is a well-quasi-ordering.
\end{proposition}

\paragraph{``Lifting'' relations to powersets.}

Finally we present a general mechanism which allows to ``lift'' a binary relation on $X$ to its powerset, which we denote by $2^{X}.$

\begin{definition}[Smyth extension]\label{smyth}
Let $\leqslant$ be a binary relation on a set $X.$
The \emph{Smyth extension} of $\leqslant$ on $2^{X}$, denoted by $\leqslant^{*}$, is defined as follows.
For every two subsets $A$ and $B$ of $X$:
$$\text{$A \leqslant^{*} B$ if and only if for every $b \in B$ there exists $a \in A$ such that $a \leqslant b$}.$$
\end{definition}

One may easily observe that the Smyth extension of a quasi-ordering maintains being a quasi-ordering: given a quasi-ordering $(X, \leqslant),$ $(2^{X}, \leqslant^{*})$ is a quasi-ordering.
However, in the case that $(X, \leqslant)$ is a partial-ordering, $(2^{X}, \leqslant^{*})$ may fail to be a partial-ordering.

The following proposition in order theory (see e.g. \cite{Marcone01Fine}) shows how the Smyth extension of a quasi-ordering relation translates to ordering downward $\leqslant$-closed subsets by inclusion.

\begin{proposition}[\!\! \cite{Marcone01Fine}]\label{exclminorssmith} Let $(X, \leqslant)$ be a quasi-ordering.
Then for every two subsets $A$ and $B$ of $X$:
$$\text{$A \leqslant^{*} B$ if and only if $X \setminus \upclosure{\leqslant} A \subseteq X \setminus \upclosure{\leqslant} B.$}$$
\end{proposition}

\paragraph{$\omega^{2}$-well-quasi-orderings.}

Finally, one may wonder whether $(\down{\leqslant}{X}, \subseteq)$ is a well-quasi-ordering, given a well-quasi-ordering $(X, \leqslant).$
The answer to this question is negative as $(\down{\leqslant}{X}, \subseteq)$ may indeed fail to be a well-quasi-ordering.
The classic counterexample was given by Rado.

\begin{figure}[htbp]
\centering
\includegraphics[width=0.45\linewidth]{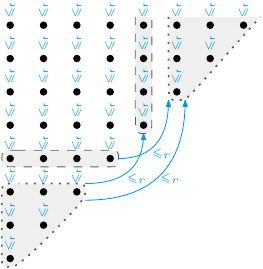}
\caption{\label{radofig}An illustration of the Rado structure.}
\end{figure}

The \emph{Rado structure} (see~\autoref{radofig}) is defined as the pair
$$(\{ (i, j) \in \mathbb{N} \times \mathbb{N} \mid i < j\}, \leqslant_{\mathsf{r}}),$$
where
$$\text{$(i, j) \leqslant_{\mathsf{r}} (i', j')$ if and only if ($i = i'$ and $j ≤ j'$) or ($j < i'$)}.$$

This leads to the definition of a ``second-order lifting'' of a well-quasi-ordering (see \cite{Jancar99ANote, marcone1994foundations}).

\begin{definition}[\!\! \cite{Jancar99ANote}]\label{omega2_wqo}
A quasi-ordering $(X, \leqslant)$ is an $\omega^{2}$-\emph{well-quasi-ordering} if it is a well-quasi-ordering and does not contain an isomorphic copy of the Rado structure.
\end{definition}

The following Theorem due to Jančar in \cite{Jancar99ANote} (see also the work of Marcone in \cite{Marcone01Fine}) gives a characterization of $\omega^{2}$-well-quasi-orderings $(X, \leqslant)$ in terms of the well-quasi-ordering status of subsets of $X$ ordered by the Smyth extension of $\leqslant$.

\begin{theorem}[\!\! \cite{Jancar99ANote}]\label{powerquasi}
Let $(X, \leqslant)$ be a quasi-ordering.
Then $(2^{X}, \leqslant^{*})$ is a well-quasi-ordering if and only if $(X, \leqslant)$ is an $\omega^{2}$-well-quasi-ordering.
\end{theorem}

Notice that \autoref{exclminorssmith} allows for comparing downward $\leqslant$-closed subsets in terms of the Smyth extension on subsets.
This combined with \autoref{powerquasi} gives the following corollary:

\begin{corollary}\label{subsetwqo}
Let $(X, \leqslant)$ be a quasi-ordering.
Then $(\down{\leqslant}{X}, \subseteq)$ is a well-quasi-ordering if and only if $(X, \leqslant)$ is an $\omega^{2}$-well-quasi-ordering.
\end{corollary}

Moreover \autoref{subsetwqo} implies the following variant of \autoref{idl_wqo_down_wqo}:

\begin{corollary}\label{idl_wqo_powset_wqoo}
Let $(X, \leqslant)$ be a well-quasi-ordering.
Then $(\idl{\leqslant}{X}, \subseteq)$ is a well-quasi-ordering if and only if $(X, \leqslant)$ is an $\omega^{2}$-well-quasi-ordering.
\end{corollary}

\subsection{Graph theory}\label{gt_conc}

\paragraph{Sets and integers.} We denote by $\mathbb{N}$ the set of non-negative integers, by $\Nbbb_{\geq n},$ $n > 1,$ to be the set $\Nbbb \setminus \{ m \in \Nbbb \mid m < n \},$ and by $\Nbbb^\mathsf{even}$ the set of even numbers in $\Nbbb.$
Given two integers $p, q,$ where $p \leq q,$ we denote by $[p, q]$ the set $\{p, \dots, q\}.$ For an integer $p \geq 1,$ we set $[p] = [1, p]$ and $\mathbb{N}_{\geq p} = \mathbb{N} \setminus [0, p - 1].$
For a set $S,$ we denote by $2^{S}$ the set of all subsets of $S$ and by $\binom{S}{2}$ the set of all subsets of $S$ of size $2.$
If $\mathcal{S}$ is a collection of objects where the operation $\cup$ is defined, then we denote $\cupall S = \bigcup_{X \in \mathcal{S}} X.$
Also, given a function $f \colon A \to B$ by slightly abusing the notation, we always consider its extension $f \colon 2^A\to B$ such that for every $X \subseteq A,$ $f(X) = \{f(a) \mid a \in X\}.$

\paragraph{Graphs.}

Unless explicitly stated otherwise, throughout this paper we consider only finite, undirected, and simple graphs.
Formally, a \emph{graph} $G$ is a pair $(V, E \subseteq {V \choose 2})$ where $V$ is the finite set of its \emph{vertices} and $E$ is the set of its \emph{edges}.
We use $V(G)$ (resp. $E(G)$) to denote the \emph{vertex set} (resp. the \emph{edge set}) of a graph $G$.
A graph $G$ is not allowed to have \emph{loops}, i.e, for every vertex $u$ of $G$, the edge $\{ u, u \} \notin E(G)$.
The \emph{order} of $G$, denoted by $|G|$ is defined as $|V(G)|$.

If $H$ is a \emph{subgraph} of $G,$ that is $V(H)\subseteq V(G)$ and $E(H)\subseteq E(G)$, we denote this by $H \subseteq G.$
Given a vertex $v \in V(G),$ we denote by $N_{G}(v)$ the set of vertices of $G$ that are adjacent to $v$ in $G.$
Given an edge $e = \{ u, v \} \in E(G)$, the \emph{contraction} of $e$ results in a graph obtained from $G \setminus \{ u, v \}$ by adding a new vertex $w$ adjacent to all vertices incident to $u$ or $v$ in $G$ that is not $u$ or $v$.
A graph $H$ is a \emph{minor} of a graph $G$ if $H$ can be obtained from a subgraph of $G$ after contracting edges.
We denote this relation by $H \leqslant_{\mathsf{m}} G$.

\medskip
Throughout this paper, whenever we use the symbol $\leqslant,$ we shall refer to a well-founded quasi-ordering relation on the set of all graphs, denoted by $\gall$.
For the sake of simplicity we work with the canonical partial-ordering corresponding to $\leqslant$, i.e. for each equivalence class of $\equiv_{\leqslant}$, we arbitrarily choose a representative graph and (re)define $\gall$ to be the set of all chosen representatives. 
Then, $(\gall, \leqslant)$ is a well-founded partial-ordering.

Additionally, as we shall see, our results on universal obstructions apply to any arbitrary well-founded quasi-ordering relation on $\gall.$
However, we shall we use the minor relation throughout the presentation of our results in an attempt to enhance the readability of this work.

\paragraph{Graph classes.}

Respecting the order theoretic terminology we introduced in the previous subsection, we define a $\leqslant$-\emph{closed} class to be any set $\mathcal{G}$ of graphs that belongs to $\down{\leqslant}{\gall}.$

Additionally, for every $\leqslant$-closed class $\mathcal{G}$ we define the $\leqslant$-\emph{obstruction set} of $\mathcal{G},$ denoted by $\obs_{\leqslant}(\mathcal{G})$ to be the set $\m{\leqslant}{\gall \setminus \mathcal{G}}.$
Note that $\gall \setminus \mathcal{G}$ belongs to $\up{\leqslant}{\gall}$ and that $\obs_{\leqslant}(\mathcal{G})$ is the (unique) $\leqslant$-basis of $\gall \setminus \mathcal{G}.$
Moreover, $\obs_{\leqslant}(\mathcal{G})$ is a $\leqslant$-antichain.

For a set of graphs $\mathcal{Z}$, we define the class of $\mathcal{Z}$-$\leqslant$-\emph{free graphs} as $\excl_{\leqslant}(\mathcal{H}) := \gall \setminus \upclosure{\leqslant}{\mathcal{H}}$.
Note that $\excl_{\leqslant}(\mathcal{Z}) = \{ G \in \gall \mid \forall Z \in \mathcal{Z} : Z \nleqslant G\}$ and that $\excl_{\leqslant}(\mathcal{H})$ is $\leqslant$-closed.
Also note that $\mathcal{G} = \excl_{\leqslant}(\obs_{\leqslant}(\mathcal{G}))$.

\medskip
The following proposition is a restatement of \autoref{exclminorssmith} in the terminology introduced in this paragraph.
It gives us a way to test whether a $\leqslant$-closed class class is a subclass of another in terms of the Smyth extension of $\leqslant$ on their $\leqslant$-obstruction sets.

\begin{proposition}\label{subobs}
Let $(\gall, \leqslant)$ be a quasi-ordering.
Then for every two sets of graphs $\mathcal{H}$ and $\mathcal{F}$, $\mathcal{H} \leqslant^{*} \mathcal{F}$ if and only if $\excl_{\leqslant}(\mathcal{H}) \subseteq \excl_{\leqslant}(\mathcal{F}).$
\end{proposition}

As already stated in the introduction, according to the Robertson-Seymour theorem \cite{RobertsonS04GraphMinorsXX}, $\gall$ is well-quasi-ordered by the minor relation.
One of the main consequences of this theorem is that it directly implies that for every $\mathcal{G} \in \mathsf{Down}_{\leqslant_{\mathsf{m}}}(\gall),$ $\obs_{\leqslant_{\mathsf{m}}}(\mathcal{G})$ is a finite set.

\section{The parametric framework}\label{asymp}\label{asympotic_view}

As we discussed in the introduction, our main motivation is the formalization of a theory of finite obstructions for ``second order'' properties of graph classes as well as the introduction of a universal framework for dealing with constructive proofs of obtaining such finite characterizations of individual properties of interest.

\subsection{Class properties}

With this target in mind we commence our story with the introduction of class properties.

\medskip
For the purposes of the definitions that follow, we furthermore assume that $(\gall, \leqslant)$ is a well-quasi-ordering.
As implied by \autoref{wqo_wellfouned}, the former statement is equivalent to $(\down{\leqslant}{\gall}, \subseteq)$ being well-founded.
We first define $\leqslant$-\textsl{class properties}.

\begin{definition}[Class property]
A $\leqslant$-\emph{class property} is any 
subset of $\down{\leqslant}{\gall}$ that is inclusion-closed.
\end{definition}

To illustrate the aforementioned definitions, let us introduce a relatively simple family of $\leqslant$-class properties as a running example.
For every $\leqslant$-closed class $\mathcal{G},$ we define the $\leqslant$-class property $\mathbb{P}(\Gcal)$ as follows.
\begin{align*}
\mathbb{P}(\Gcal) \ \coloneqq \ \{ \mathcal{H} \in \down{\leqslant}{\gall} \mid \mathcal{H} \subseteq \mathcal{G} \}.
\end{align*}

\paragraph{Class obstruction sets.}

Next, we define the $\leqslant$-\textsl{class obstruction set} of a $\leqslant$-class property.

\begin{definition}[Class obstruction set]
For every $\leqslant$-class property $\mathbb{CP},$ the $\leqslant$-\emph{class obstruction set} of $\mathbb{CP}$, denoted by $\cobs_{\leqslant}(\mathbb{CP}),$ is the set $\m{\subseteq}{\down{\leqslant}{\gall} \setminus \mathbb{CP}}.$
\end{definition}

Note that, for every $\leqslant$-class property $\mathbb{CP},$ the set $\down{\leqslant}{\gall} \setminus \mathbb{CP}$ is an upward inclusion-closed subset of $\down{\leqslant}{\gall}$ and that $\cobs_{\leqslant}(\mathbb{CP})$ is the (unique) inclusion-basis of $\down{\leqslant}{\gall} \setminus \mathbb{CP},$ which by the former assumption exists for every $\leqslant$-class property $\mathbb{CP}.$
Additionally, by definition, a $\leq$-closed class $\mathcal{G}$ belongs to $\mathbb{CP}$ if and only if no $\leqslant$-closed class in $\cobs_{\leqslant}(\mathbb{CP})$ is a subclass of $\mathcal{G},$ for every $\leqslant$-class property $\mathbb{CP}$.
Also, by minimality, $\cobs_{\leqslant}(\mathbb{CP})$ is an inclusion-antichain.

\medskip
One may quickly observe that for every $\leqslant$-closed class $\mathcal{G},$ the $\leqslant$-class obstruction set of $\mathbb{P}(\Gcal)$ consists of $|\obs_{\leqslant}(\Gcal)|$ many $\leqslant$-closed classes, each being the $\leqslant$-closure of some graph in $\obs_{\leqslant}(\Gcal).$
Formally:
\begin{align*}
\cobs_{\leqslant}(\mathbb{P}(\Gcal)) \ \coloneqq \ \{ \closure{\leqslant}{\{ Z \}} \mid Z \in \obs_{\leqslant}(\Gcal) \}.
\end{align*}

Moreover, by \autoref{countable_wqo}, we have the following observation on the cardinality of the $\leqslant$-class obstruction sets of $\leqslant$-class properties.

\begin{observation} 
Let $(\gall, \leqslant)$ be a well-quasi-ordering.
Then, the $\leqslant$-class obstruction set of every $\leqslant$-class property is countable.
\end{observation}

Furthermore, by definition of a well-quasi-ordering, we may observe that under the assumption that $(\down{\leqslant}{\gall}, \subseteq)$ is well-founded, $\down{\leqslant}{\gall}$ having no infinite $\subseteq$-antichains is equivalent to the $\leqslant$-class obstruction set of every $\leqslant$-class property being finite.
This observation combined with \autoref{wqo_wellfouned} and \autoref{subsetwqo} implies the following.

\begin{observation}\label{wqo_implies_omega2_equiv_class_obs_finite} Let $(\gall, \leqslant)$ be a well-quasi-ordering. Then, the following statements are equivalent:
\begin{enumerate}
    \item $(\gall, \leqslant)$ is an $\omega^{2}$-well-quasi-ordering;
    \item the $\leqslant$-class obstruction set of every $\leqslant$-class property is finite.
\end{enumerate}
\end{observation}

\paragraph{Directed class properties.}

In this paragraph we are interested in a particular subclass of class properties that will be instrumental towards the characterization of class properties in terms of graph parameters.
We define \textsl{directed} $\leqslant$-class properties as the $\subseteq$-ideals of $\down{\leqslant}{\gall}$.
For completeness:

\begin{definition}[Directed $\leqslant$-class property] A $\leqslant$-class property $\mathbb{CP}$ is \emph{directed} if for every two $\leqslant$-closed classes $\mathcal{G}$ and $\mathcal{H}$ in $\mathbb{CP},$ there exists a $\leqslant$-closed class $\mathcal{F}$ in $\mathbb{CP}$ that is a superclass of both $\mathcal{G}$ and $\mathcal{H}.$
\end{definition}

Over the next few sections it will become apparent that, for every $\leqslant$-closed class $\Gcal,$ the $\leqslant$-class property $\mathbb{P}(\Gcal)$ is directed.

\medskip
Directed $\leqslant$-class properties are particularly interesting as we can show that their $\leqslant$-class obstruction sets precisely correspond to inclusion-antichains of $\leqslant$-ideals of $\gall.$

\begin{theorem}\label{wqo_implies_directed_ideal}
Let $(\gall, \leqslant)$ be a well-quasi-ordering.
Then, a $\leqslant$-class property $\mathbb{CP}$ is directed if and only if $\cobs_{\leqslant}(\mathbb{CP})$ is an inclusion-antichain of $\leqslant$-ideals of $\gall.$
\end{theorem}
\begin{proof} We prove the forward direction by proving the contrapositive.
Assume that there exists $\mathcal{Z} \in \cobs_{\leqslant}(\mathbb{CP})$ that is not a $\leqslant$-ideal of $\gall.$
Then, by \autoref{finite_closed_set_decomposition}, there exists a unique finite set $\{ \mathcal{G}_{1}, \ldots, \mathcal{G}_{z} \},$ $z \in \Nbbb,$ of $\leqslant$-closed classes such that $\mathcal{Z} = \bigcup_{i \in [z]} \mathcal{G}_{i}.$
Moreover, since $\mathcal{Z}$ is not a $\leqslant$-ideal of $\gall,$ every $\mathcal{G}_{i}$ is a proper subclass of $\mathcal{Z}.$
Also, since $\cobs_{\leqslant}(\mathbb{CP})$ is an inclusion-antichain, we have that $\mathcal{Z}' \not\subseteq \mathcal{G}_{i},$ for every $\mathcal{Z}' \in \cobs_{\leqslant}(\mathbb{CP})$ and every $i \in [z].$
Now, since $\mathbb{CP}$ is directed there must exist a $\leqslant$-closed class $\mathcal{H} \in \mathbb{CP}$ such that $\bigcup_{i \in [z]} \mathcal{G}_{i} \subseteq \mathcal{H},$ which leads to a contradiction.

For the reverse direction, let $\mathcal{H}$ and $\mathcal{G}$ be two $\leqslant$-closed classes in $\mathbb{CP}.$
This implies that $\mathcal{Z} \not\subseteq \mathcal{H}$ and $\mathcal{Z} \not\subseteq \mathcal{G},$ for every $\leqslant$-ideal $\mathcal{Z}$ of $\gall$ in $\cobs_{\leqslant}(\mathbb{CP}).$
We claim that $\mathcal{Z} \not\subseteq \mathcal{H} \cup \mathcal{G},$ for every $\mathcal{Z} \in \cobs_{\leqslant}(\mathbb{CP}).$
Indeed, since every such $\mathcal{Z}$ is irreducible, if $\mathcal{Z} \subseteq \mathcal{H} \cup \mathcal{G},$ then either $\mathcal{Z} \subseteq \mathcal{H}$ or $\mathcal{Z} \subseteq \mathcal{G},$ which contradicts our assumptions.
\end{proof}

Finally, combining \autoref{idl_wqo_down_wqo}, \autoref{ideal_decomposition_implies_wqo}, and \autoref{wqo_implies_omega2_equiv_class_obs_finite} we obtain the following.

\begin{theorem}\label{master_template} Let $(\gall, \leqslant)$ be a well-quasi-ordering. Then, the following statements are equivalent:
\begin{enumerate}
\item $(\gall, \leqslant)$ is an $\omega^{2}$-well-quasi-ordering;
\item the $\leqslant$-class obstruction set of every $\leqslant$-class property is finite;
\item the $\leqslant$-class obstruction set of every directed $\leqslant$-class property is finite;
\item every $\leqslant$-class property is equal to the union of a finite inclusion-antichain of directed $\leqslant$-class properties.
\end{enumerate}
\end{theorem}
\begin{proof} The equivalency between \emph{1.} and \emph{2.} follows from \autoref{wqo_implies_omega2_equiv_class_obs_finite}.
The equivalency between \emph{2.} and \emph{3} follows from \autoref{idl_wqo_down_wqo}.
The equivalency between \emph{2.} and \emph{4.} follows from \autoref{ideal_decomposition_implies_wqo}.
\end{proof}

\subsection{Graph parameters}\label{subsec_parameters_parfund}

As we shall see in the coming sections, under certain order theoretic assumptions, graph parameters play a vital role in the quest for a unifying framework for characterizations of class properties.
The goal of this subsection is to introduce graph parameters and set the groundwork for the study of the approximate behaviour.

\medskip
For the purposes of formalizing the theory of universal obstructions for graph parameters, we consider a slight extension of the definition of a graph parameter we gave in the introduction, where we allow for the parameter to be \textsl{infinite} for certain inputs. In this setting:

\begin{definition}[Graph parameter]
A \emph{graph parameter} is a function mapping graphs to non-negative integers and infinity, i.e. $\p  \colon  \gall \to \Nbbb^{\infty}.$
For a graph parameter $\p,$ we define its \emph{domain}, denoted by $\dom(\p)$, to be the set $\{ G \in \gall \mid \p(G) \in \mathbb{N}\}.$
We say that a graph parameter $\p$ is $\leqslant$-\emph{monotone} if for every two graphs $H$ and $G,$ $H \leqslant G$ implies that $\p(H) \leq \p(G).$
For every $k \in \mathbb{N},$ we define the \emph{class of graphs of $\p$ at most $k$}, denoted by $\mathcal{G}_{\p,k}$, as the class $\{ G \in \gall \mid \p(G) \leq k \}.$
\end{definition}

Note that whenever $\p$ is $\leqslant$-monotone, $\dom(\p)$ is a $\leqslant$-closed class and so is $\mathcal{G}_{\p,k},$ for every $k \in \mathbb{N}.$

In order to compare graph parameters and discuss their (approximate) relative behavior, we introduce a quasi-ordering relation $\preceq$ on the set of all graph parameters.
Let $\p$ and $\p'$ be two graph parameters. 
We write $\p \preceq \p'$ if $\dom(\p') \subseteq \dom(\p)$ and there exists a function $f \colon \mathbb{N} \to \mathbb{N}$, called the \emph{gap function} of the relation, such that for every graph $G \in \dom(\p'),$ it holds that $\p(G) \leq f(\p'(G)).$
One can easily verify that $\preceq$ is a quasi-ordering on the set of graph parameters.
Moreover, we use $\sim$ to denote the equivalence relation $\equiv_{\preceq}$ induced by $\preceq$: we say that $\p$ and $\p'$ are \emph{equivalent} with gap function $f$, denoted by $\p \sim \p',$ if there exists a function $f \colon \mathbb{N} \to \mathbb{N}$ such that $\p \preceq \p'$ and $\p' \preceq \p$ both with gap function $f$.
One may notice that even if we allow the gap function $f$ above to differ for the two relations, the definition of equivalence remains the same.

The following lemma gives an equivalent way to approximately compare parameters in terms of their classes of graphs of parameter at most some non-negative integer.

\begin{lemma}\label{parcompobs}
Let $\p$ and $\p'$ be $\leqslant$-monotone parameters. 
Then, $\p \preceq \p'$ if and only if there exists a function $f \colon \Nbbb \to \Nbbb$ such that $\mathcal{G}_{\p', k} \subseteq \mathcal{G}_{\p, f(k)},$ for every $k \in \Nbbb.$
\end{lemma}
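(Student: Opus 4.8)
The plan is to prove \autoref{parcompobs} directly from the definitions of $\preceq$ and of the sets $\mathcal{G}_{\p,k}$, unwinding both sides to statements about membership of graphs.

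\textbf{Forward direction.} Suppose $\p \preceq \p'$, so $\dom(\p') \subseteq \dom(\p)$ and there is a function $f : \mathbb{N} \to \mathbb{N}$ such that $\p(G) \le f(\p'(G))$ for every $G \in \dom(\p')$. Fix $k \in \mathbb{N}$ and set $k' := f(k)$. I claim $\mathcal{G}_{\p',k} \subseteq \mathcal{G}_{\p,k'}$. Indeed, take any $G$ with $\p'(G) \le k$. Then $G \in \dom(\p') \subseteq \dom(\p)$, so $\p(G) \le f(\p'(G)) \le f(k) = k'$ (here I use that $f$ may be taken non-decreasing, or alternatively I replace $f$ by $k \mapsto \max_{j \le k} f(j)$, which is still a valid witness for $\p \preceq \p'$), hence $G \in \mathcal{G}_{\p,k'}$.

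\textbf{Reverse direction.} Suppose that for every $k$ there exists $k'$ with $\mathcal{G}_{\p',k} \subseteq \mathcal{G}_{\p,k'}$; fix a choice function $k \mapsto k'(k)$ realizing this. First, $\dom(\p') \subseteq \dom(\p)$: if $G \in \dom(\p')$ then $\p'(G) = k$ for some $k \in \mathbb{N}$, so $G \in \mathcal{G}_{\p',k} \subseteq \mathcal{G}_{\p,k'(k)}$, giving $\p(G) \le k'(k) < \infty$, i.e.\ $G \in \dom(\p)$. Next define $f : \mathbb{N} \to \mathbb{N}$ by $f(k) := k'(k)$. For any $G \in \dom(\p')$, with $k = \p'(G)$, membership $G \in \mathcal{G}_{\p',k}$ is immediate, so $G \in \mathcal{G}_{\p,f(k)}$ and thus $\p(G) \le f(k) = f(\p'(G))$. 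Hence $\p \preceq \p'$.

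\textbf{Main obstacle.} This is essentially a definitional unwinding, so there is no deep obstacle; the only subtlety to handle carefully is monotonicity of the witnessing function in the forward direction (one needs $\p'(G) \le k \Rightarrow f(\p'(G)) \le f(k)$, which requires either choosing $f$ non-decreasing or taking a running maximum), and the bookkeeping that $\dom(\p') \subseteq \dom(\p)$ is both an assumption packaged into $\preceq$ and something that must be re-derived from the family of inclusions in the reverse direction. Note also that $\le$-monotonicity of $\p,\p'$ is not actually needed for the equivalence itself; it is only relevant to ensure the sets $\mathcal{G}_{\p,k}$ lie in $\down(\gall)$, which plays no role in this proof.
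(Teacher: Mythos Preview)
Your proof is correct and follows the same approach as the paper's own proof: both directions are direct unwindings of the definitions, with $k' = f(k)$ in the forward direction and $f(k) = k'(k)$ in the reverse. In fact you are slightly more careful than the paper: the paper's forward direction jumps from $\p'(G)\le k$ directly to $\p(G)\le f(k)$ without commenting on why $f(\p'(G))\le f(k)$, whereas you explicitly note that one should replace $f$ by its running maximum to make this step valid; your closing remark that $\le$-monotonicity of $\p,\p'$ plays no role in the argument is also accurate.
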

\begin{proof}

Assume that $\p \preceq \p'.$ 
Then $\dom(\p') \subseteq \dom(\p)$ and there exists $f  \colon  \mathbb{N} \to \mathbb{N}$ such that for every $G \in \dom(\p'),$ $\p(G) ≤ f(\p'(G)).$
Fix $k \in \mathbb{N}$ and let $G \in \mathcal{G}_{\p', k}.$ 
Equivalently $\p'(G) \leq k$ and thus $\p(G) \leq f(k),$ which implies $G \in \mathcal{G}_{\p, f(k)}.$

For the reverse direction, assume that there exists a function $f \colon \Nbbb \to \Nbbb$ such that $\mathcal{G}_{\p', k} \subseteq \mathcal{G}_{\p, f(k)},$ for every $k \in \Nbbb.$
Clearly $\dom(\p') \subseteq \dom(\p).$ 
Let $G \in \dom(\p')$ such that $\p'(G) = k.$ 
Then $\p(G) \leq f(k).$ 
Equivalently, $\p(G) \leq f(\p'(G)).$
\end{proof}

As a running example of a minor-monotone parameter we choose that of pathwidth, defined by Robertson and Seymour in \cite{RobertsonS83GMI}, which plays an important role in their Graph Minors series.
Out of the many ways to define the pathwidth of a graph, we choose here a definition in terms of vertex orderings.
The \emph{pathwidth} of a graph $G,$ denoted by $\pw(G),$ is the minimum $k$ for which there exists a vertex ordering $\langle v_{1}, \ldots, v_{n} \rangle$ of the vertices of $G$ such that, for each $i \in [n],$ there are at most $k$ vertices in $\{v_{1}, \ldots, v_{i-1} \}$ that are adjacent to vertices in $\{v_{i}, \ldots, v_{n}\}$ (see also \cite{Kinnersley92thev, Moehring90grap, KirousisP85inte, Bienstock89grap, BienstockS91mono, RobertsonS83GMI} for alternative definitions).
We also consider the parameter \emph{biconnected pathwidth}, denoted by $\bipw(G)$, of a graph $G$ that is the maximum pathwidth of its blocks\footnote{A subgraph $B$ of a graph $G$ is a \emph{block} of $G$ if it is either a 2-connected component or an isolated vertex or, a bridge of $G$ (a \emph{bridge} of a graph $G$ is a subgraph  of $G$ on two vertices and one edge whose removal increases the number of connected components of $G$).}.
Biconnected pathwidth was defined in \cite{Thomas96} and was studied in \cite{dang2018minors, HuynhJMW20Seymour}.
One may easily notice that, the biconnected pathwidth of a graph is at most the pathwidth of a graph, i.e., $\bipw \preceq \pw.$

\subsection{Parametric graphs}\label{subsec_parametric_graphs}

Having introduces graph parameters in the previous sections, we move on to the second key notion of our parametric viewpoint.
In this subsection we delve into the realm of parametric graphs.

\medskip
A \emph{graph sequence} $\mathscr{H}$ is a countable sequence of graphs indexed by non-negative integers, denoted by $\mathscr{H} := \langle \mathscr{H}_{i} \rangle_{i \in \mathbb{N}}$. 
In some cases, instead of a sequence, we treat $\mathscr{H}$ as a set of graphs. 
When this is the case we say that $\{ \mathscr{H}_{i} \mid i \in \mathbb{N} \}$ is \emph{the set} of $\mathscr{H}$.
Note that in a sequence $\mathscr{H}$ the same graph is allowed to repeat for distinct indices, but the set of $\mathscr{H}$ is not a \textsl{multiset}, i.e. only a single copy per distinct graph is contained in the set of $\mathscr{H}$.
Moreover, while $\mathscr{H}$ is infinite, the set of $\mathscr{H}$ may be finite.
We define the $\leqslant$-\emph{width} of a graph sequence, denoted by $\w_{\leqslant}(\mathscr{H}),$ as the width of the partial-ordering $(\mathscr{H}, \leqslant)$.
For $k \in \mathbb{N},$ we write $\mathscr{H}_{\geq k}$ for the sequence $\langle \mathscr{H}_{i} \rangle_{i \geq k}$, i.e., for the sequence obtained from $\mathscr{H}$ after removing all graphs with index smaller that $k.$
We call a graph sequence $\leqslant$-\emph{rational} if $\w_{\leqslant}(\mathscr{H}) \in \mathbb{N},$ $\leqslant$-\emph{prime} if $\w_{\leqslant}(\mathscr{H}) = 1$, and $\leqslant$-\emph{increasing} if for $i \leq j,$ $\mathscr{H}_{i} \leqslant \mathscr{H}_{j}.$
With this we are now ready to define parametric graphs.

\begin{definition}[Parametric graph]\label{def:par_graph}
Any $\leqslant$-increasing graph sequence is a $\leqslant$-\emph{parametric graph}.
\end{definition}

The monotonicity of the definition of $\leqslant$-parametric graphs allows for a natural way to compare them.
We define a quasi-ordering relation $\lesssim$ on the set of all $\leqslant$-parametric graphs.
For two $\leqslant$-parametric graphs $\mathscr{H}$ and $\mathscr{F},$ we write $\mathscr{H} \lesssim \mathscr{F}$ if there exists a function $f \colon \mathbb{N} \to \mathbb{N}$ such that $\mathscr{H}_{k} \leqslant \mathscr{F}_{f(k)}$.
If we wish to specify the function $f$ in the definition above we shall denote this by $\mathscr{H} \lesssim_{f} \mathscr{F}.$
It is not hard to see that $\lesssim$ is a quasi-ordering on the set of $\leqslant$-parametric graphs.

Moreover, we say that $\mathscr{H}$ and $\mathscr{F}$ are \emph{equivalent}, denoted by $\mathscr{H} \approx \mathscr{F}$, if $\mathscr{H} \lesssim \mathscr{F}$ and $\mathscr{F} \lesssim \mathscr{H}$, i.e., if they are equivalent under the equivalence relation induced by $\lesssim$.
Once more, if we wish to specify the function $f$ that certifies both directions of the above equivalence, we shall denote this by $\mathscr{H} \approx_{f} \mathscr{F}.$
Note that forcing a single function that bounds both directions in the definition of $\approx_{f}$ gives an equivalent definition to $\approx$.

We continue our discussion with three examples of minor-parametric graphs:
\begin{itemize}
\item The sequence $\mathscr{T}=\langle \mathscr{T}_1,\mathscr{T}_2,\mathscr{T}_3,\mathscr{T}_4,\dots\rangle$ of complete ternary trees (see \autoref{tern_trr}), 
\item the sequence $\mathscr{T}^a=\langle \mathscr{T}^a_2,\mathscr{T}^a_3,\mathscr{T}^a_4,\mathscr{T}^a_5,\dots\rangle,$ where each $\mathscr{T}^a_k$ is obtained from $\mathscr{T}_k$ after adding a new vertex and connecting it to its leaves (see \autoref{teeapex}), and
\item  the sequence $\mathscr{T}^{a*}=\langle \mathscr{T}^{a*}_2,\mathscr{T}^{a*}_3,\mathscr{T}^{a*}_4,\mathscr{T}^{a*}_5,\dots\rangle$, where each $\mathscr{T}^{a*}_k$ is obtained from the dual of $\mathscr{T}_k^{a}$ after subdividing, for each double edge, one of its copies (see \autoref{fig_dual_Tk}).
\end{itemize}

\begin{figure}[htbp]
\centering
\includegraphics[width=0.9\linewidth]{ternary_trees}
\caption{\label{tern_trr}The minor-parametric $\mathscr{T}=\langle \mathscr{T}_1,\mathscr{T}_2,\mathscr{T}_3,\mathscr{T}_4,\ldots\rangle$ of complete ternary trees.}
\end{figure}

\begin{figure}[htbp]
\centering
\includegraphics[width=0.9\linewidth]{apex_complete_ternary_tree}
\caption{\label{fig_bpw}\label{teeapex}The minor-parametric graph $\mathscr{T}^a = \langle \mathscr{T}^a_2,\mathscr{T}^a_3,\mathscr{T}^a_4,\mathscr{T}^a_5,\ldots\rangle.$}
\end{figure}

\begin{figure}[htbp]
\centering
\includegraphics[width=0.9\linewidth]{outerplanar_parametric_obstruction}
\caption{\label{fig_dual_Tk}\label{teeapex2}The minor-parametric graph $\mathscr{T}^{a*}=\langle \mathscr{T}^{a*}_2,\mathscr{T}^{a*}_3,\mathscr{T}^{a*}_4,\mathscr{T}^{a*}_5,\ldots\rangle.$}
\end{figure}

\paragraph{Sequences and their parameters.}

To every graph sequence $\mathscr{H}$ we associate a graph parameter $\p_{\mathscr{H}}$ as follows.

\begin{definition}[Parameter associated to a sequence]\label{thif0lkf}
For every graph sequence $\mathscr{H},$ we define the graph parameter $\p_{\mathscr{H}}$, so that, for a graph $G$:
$$\p_{\mathscr{H}}(G) \coloneqq \inf\{ k \in \mathbb{N} \mid \m{\leqslant}{\mathscr{H}_{\geq k}} \nleqslant^{*} \{G\} \}.$$
Note that $\p_{\mathscr{H}}$ is $\leqslant$-monotone and that $\mathcal{G}_{\p_{\mathscr{H}}, k} = \excl_{\leqslant}(\m{\leqslant}{\mathscr{H}_{\geq k}}),$ for every $k \in \Nbbb.$
\end{definition}

Motivated by the definition above, we may classify the set of all parameters according to the existence of equivalent representative parameters that are associated to sequences.
Let $\p$ be a graph parameter. 
We say that $\p$ is $\leqslant$-\emph{real} if there exists a graph sequence $\mathscr{H}$ such that $\p \sim \p_{\mathscr{H}}$.
If additionally $\mathscr{H}$ is $\leqslant$-rational (resp. $\leqslant$-prime) then we say that $\p$ is \emph{$\leqslant$-rational} (resp. \emph{$\leqslant$-prime}).

Let $\mathscr{H}$ be a $\leqslant$-parametric graph.
We wish to stress that the monotonicity of $\mathscr{H}$ leads to a neater definition for the parameter associated to it.
Indeed, one may notice that for every $k \in \Nbbb,$ $\m{\leqslant}{\mathscr{H}_{\geq k}} = \{ \mathscr{H}_{k} \}.$
Therefore we make the following observation:

\begin{observation}\label{parameter_parametric_graph_simple}
Let $\mathscr{H}$ be a $\leqslant$-parametric graph.
Then, for every graph $G$:
$$\p_{\mathscr{H}}(G) = \inf\{ k \in \Nbbb \mid \mathscr{H}_{k} \not\leqslant G \}.$$
Moreover, $\mathcal{G}_{\p_{\mathscr{H}}, k} = \excl_{\leqslant}(\{\mathscr{H}_{k}\}),$ for every $k \in \Nbbb.$
\end{observation}

We should remark that, given a $\leqslant$-parametric graph $\mathscr{H},$ the set $\{ k \in \Nbbb \mid \mathscr{H}_{k} \not\leqslant G \}$ may be empty for some graph $G$ and therefore for this graph $\p_{\mathscr{H}}(G) = \infty.$
We denote by $\p_{\infty}$ the parameter where for every graph $G \in \gall,$ $\p_{\infty}(G) = \infty.$
It is straightforward to observe that $\p_{\infty}$ corresponds to a maximal element of the quasi-ordering $\preceq$ on the set of all parameters as trivially, for every graph parameter $\p,$ $\p \preceq \p_{\infty}.$
Therefore, $\p_{\infty}$ corresponds to one ``extreme'' of the quasi-ordering $\preceq$ on the set of all graph parameters.

In the context of the minor relation, as every graph contains the empty graph $K_{0}$ as a minor (in fact even as a subgraph), $\p_{\infty} = \p_{\mathscr{H}},$ where $\mathscr{H}$ is the minor-parametric graph whose every instance is the empty graph $K_{0}.$

\paragraph{Rational sequences and finite antichains of parametric graphs.}

In this paragraph we examine the relationship between rational graph sequences and parametric graphs with respect to the approximate behaviour of their corresponding graph parameters.

We first extend \autoref{thif0lkf} to any finite set of $\leqslant$-parametric graphs.

\begin{definition}(parameter associated to a finite set of parametric graphs)\label{ofjfhfdpia}
For every finite set of $\leqslant$-parametric graphs $\mathfrak{H},$ we define the parameter $\p_{\mathfrak{H}}$ so that, for a graph $G$,
$$\p_{\mathfrak{H}}(G) \coloneqq \begin{cases}
                                    \sup\{ \p_{\mathscr{H}}(G) \mid \mathscr{H} \in \mathfrak{H} \}, & \text{if $\mathfrak{H} \neq \emptyset$}\\
                                    0, & \text{otherwise}
                                \end{cases}$$
\end{definition}

Note that in the definition above, if $\mathfrak{H}$ is empty, $\p_{\mathfrak{H}}$ corresponds to the constant zero function.
We denote this graph parameter by $\p_{0}$.
Observe that $\p_{0}$ defines the other ``extreme'' of the quasi-ordering $\preceq$ on the set of all graph parameters, in the sense that for every parameter $\p,$ $\p_{0} \preceq \p.$

The following lemma gives an equivalent definition for \autoref{ofjfhfdpia}.

\begin{lemma}\label{prime_col_new}
Let $\mathfrak{H}$ be a non-empty finite set of $\leqslant$-parametric graphs.
Then for every graph $G,$
$$\p_{\mathfrak{H}}(G) = \inf\{ k \mid \forall \mathscr{H} \in \mathfrak{H} : \mathscr{H}_{k} \nleqslant G \}.$$
\end{lemma}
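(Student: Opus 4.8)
The statement to prove is that $\p_{\frak{H}}(G) = \min\{k \mid \forall \mathscr{H}\in\frak{H} : \mathscr{H}_k \nleq G\}$, where $\p_{\frak{H}}(G) = \max_{\mathscr{H}\in\frak{H}} \p_{\mathscr{H}}(G)$ and each $\p_{\mathscr{H}}$ is defined by \eqref{thif0lkf} as $\p_{\mathscr{H}}(G) = \min\{k \mid \m(\mathscr{H}_{\geq k}) \nleq^{*} \{G\}\}$. The plan is to unwind the definition of $\p_{\mathscr{H}}$ for a single monotone sequence first, and then take the maximum over $\frak{H}$.

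First I would prove the single-sequence identity: for a $\le$-monotone sequence $\mathscr{H}$,
\[\p_{\mathscr{H}}(G) = \min\{k \mid \mathscr{H}_k \nleq G\}.\]
The key observation is that, by $\le$-monotonicity of $\mathscr{H}$ (so $\mathscr{H}_i \le \mathscr{H}_j$ for $i\le j$), the set $\{\mathscr{H}_i : i \ge k\}$ has a unique $\le$-minimal element, namely $\mathscr{H}_k$ itself; hence $\m(\mathscr{H}_{\geq k}) = \{\mathscr{H}_k\}$ (up to equivalence, which is all that matters for $\le^{*}$). Then, unpacking the definition of $\le^{*}$ for singletons, $\{\mathscr{H}_k\} \nleq^{*} \{G\}$ holds exactly when it is \emph{not} the case that there exists $x \in \{\mathscr{H}_k\}$ with $x \le G$, i.e. exactly when $\mathscr{H}_k \nleq G$. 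Therefore $\m(\mathscr{H}_{\geq k}) \nleq^{*} \{G\} \iff \mathscr{H}_k \nleq G$, and taking the minimum over $k$ on both sides gives the single-sequence identity. One should be slightly careful that $\m$ picks one representative per equivalence class, but since $\mathscr{H}_k \equiv_\le \mathscr{H}_k$ this does not affect whether $x\le G$ for the chosen representative.

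Next I would combine: $\p_{\frak{H}}(G) = \max_{\mathscr{H}\in\frak{H}}\p_{\mathscr{H}}(G) = \max_{\mathscr{H}\in\frak{H}} \min\{k \mid \mathscr{H}_k \nleq G\}$. For each $\mathscr{H}$, the set $\{k \mid \mathscr{H}_k \nleq G\}$ is upward closed in $\mathbb{N}$ (again by monotonicity: if $\mathscr{H}_k \nleq G$ and $k \le k'$, then $\mathscr{H}_k \le \mathscr{H}_{k'}$, so $\mathscr{H}_{k'} \le G$ would force $\mathscr{H}_k \le G$, a contradiction). So $\min\{k \mid \mathscr{H}_k \nleq G\}$ is the threshold $k_{\mathscr{H}}$ such that $\mathscr{H}_k \le G$ for all $k < k_{\mathscr{H}}$ and $\mathscr{H}_k \nleq G$ for all $k \ge k_{\mathscr{H}}$. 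Taking the max over $\mathscr{H}\in\frak{H}$ yields the smallest $k$ such that for \emph{every} $\mathscr{H}\in\frak{H}$ we have $k \ge k_{\mathscr{H}}$, i.e. the smallest $k$ with $\mathscr{H}_k \nleq G$ for all $\mathscr{H}\in\frak{H}$, which is exactly $\min\{k \mid \forall \mathscr{H}\in\frak{H} : \mathscr{H}_k \nleq G\}$. A formal way to phrase this last step: $\max_i \min S_i = \min \bigcap_i S_i$ when each $S_i \subseteq \mathbb{N}$ is upward closed and nonempty.

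I do not expect a serious obstacle here; this is essentially a bookkeeping lemma. The one point needing a little care is the edge case where $\frak{H}$ is such that some $S_{\mathscr{H}} = \{k \mid \mathscr{H}_k \nleq G\}$ is empty (i.e. $\mathscr{H}_k \le G$ for all $k$) — then $\p_{\mathscr{H}}(G) = \infty$ by the convention $\min\emptyset = \infty$, and correspondingly the right-hand side is $\min\emptyset = \infty$ as well, so the identity still holds; I would note this explicitly. Another minor point is that $\frak{H}$ being a finite $\lesssim$-antichain (or at least that the max is well-defined) should be used so that $\max_{\mathscr{H}\in\frak{H}}$ makes sense; if $\frak{H}$ is allowed to be infinite one takes a supremum, but the upward-closedness argument goes through unchanged.
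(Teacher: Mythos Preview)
Your proposal is correct and follows essentially the same approach as the paper's proof: both arguments unwind the definition of $\p_{\mathscr{H}}$ via $\m(\mathscr{H}_{\geq k})=\{\mathscr{H}_k\}$ (using monotonicity/primality), reduce the singleton $\leq^{*}$ condition to $\mathscr{H}_k\nleq G$, and swap the $\max$ and $\min$ using upward-closedness of the sets $\{k\mid \mathscr{H}_k\nleq G\}$. The only difference is cosmetic ordering—the paper performs the max/min swap before simplifying $\m(\mathscr{H}_{\geq k})$, whereas you simplify first and then swap—and your explicit treatment of the $\min\emptyset=\infty$ edge case is a welcome addition.
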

\begin{proof} Let $G$ be a graph. We have that,
\begin{align*}
	\p_{\mathfrak{H}}(G) \ &= \ \sup\{ \p_{\mathscr{H}}(G) \mid \mathscr{H} \in \mathfrak{H} \}\\
	\text{(\autoref{parameter_parametric_graph_simple})} \ &= \ \sup_{\mathscr{H} \in \mathfrak{H}} \inf\{ k \in \mathbb{N} \mid \mathscr{H}_{k} \nleqslant G \}\\
	\text{($i \leq j \Rightarrow \mathscr{H}_{i} \leqslant \mathscr{H}_{j}$)} \ &= \ \inf\{ k \in \mathbb{N} \mid \forall \mathscr{H} \in \mathfrak{H} : \mathscr{H}_{k} \nleqslant G \}.
\end{align*}
\end{proof}

Note that $\p_{\mathfrak{H}}$ is $\leqslant$-monotone and that from \autoref{prime_col_new} we can observe that $\mathcal{G}_{\p_{\mathfrak{H}}, k} = \excl_{\leqslant}(\{ \mathscr{H}_{k} \mid \mathscr{H} \in \mathfrak{H} \}),$ for every $k \in \Nbbb.$

We continue with the main result of this subsection which shows that any rational sequence corresponds to a finite $\lesssim$-antichain of $\leqslant$-parametric graphs with respect to equivalence of their respective parameters.

\begin{theorem}\label{rationalperiodic}
For every rational sequence $\mathscr{H},$ there exists a finite $\lesssim$-antichain of $\leqslant$-parametric graphs $\mathfrak{H}$ such that $\p_{\mathscr{H}} \sim \p_{\mathfrak{H}}$. 
\end{theorem}
\begin{proof} Let $\mathscr{Η}$ be a rational sequence, $w := \w(\mathscr{H})$, and let $\mathfrak{B} = \{ \mathscr{B}^{(i)} \mid i \in [w] \}$ be a set of $\leqslant$-chains that partition the set of $\mathscr{Η}$, as implied by \autoref{dilworth}.
Observe that $\mathfrak{B}$ may contain (only) finite $\leqslant$-chains.
From $\mathfrak{B}$ we define a set $\mathfrak{C} = \{ \mathscr{C}^{(i)} \mid i \in [w] \}$ of $\leqslant$-chains as follows. 
For every $i \in [w]$, we distinguish the following cases:
\begin{itemize}
\item There exists $k \in \mathbb{N}$ such that $\mathscr{B}^{(i)}_{k}$ is a member of $\mathscr{H}$ an infinite number of times. In this case, let $k \in \mathbb{N}$ be the minimum with this property and define $\mathscr{C}^{(i)} := \langle \mathscr{C}^{(i)}_{j} \rangle_{j \in \mathbb{N}}$ as follows. For $j < k$, $ \mathscr{C}^{(i)}_{j} := \mathscr{B}^{(i)}_{j}$ and for $j \geq k$, $\mathscr{C}^{(i)}_{j} := \mathscr{B}^{(i)}_{k}$.
\item No such $k$ exists. In this case, let $\mathscr{C}^{(i)} := \mathscr{B}^{(i)}$.
\end{itemize}

Clearly, for every $i \in [w]$, $\mathscr{C}^{(i)} \subseteq \mathscr{B}^{(i)}$.
Partition $\mathfrak{C} = \mathfrak{F} \cup \mathfrak{I}$ into the set of finite (the set $\mathfrak{F}$) and the set of infinite (the set $\mathfrak{I}$) $\leqslant$-chains in $\mathfrak{C}$.
Observe that, since $\mathscr{H}$ is a countable sequence, by definition of $\mathfrak{C}$, $\mathfrak{I}$ is non-empty.
Moreover in the case that $\mathscr{H}$ is prime, $\mathfrak{F} = \emptyset$ and $\mathfrak{I}$ is a singleton.

Let $\mathfrak{G} = \{ \mathscr{G}^{(i)} \mid i \in [r] \}$ contain only a single representative from each equivalence class of $\approx$, among the $\lesssim$-minimal $\leqslant$-chains of $\mathfrak{I}$.
Then $\mathfrak{G}$ is a $\lesssim$-antichain of $\leqslant$-parametric graphs.
For convenience, we write $\mathfrak{G}_{\geq k} := \bigcup_{i \in [r]} \mathscr{G}^{(i)}_{\geq k}.$ Note that by definition of $\leqslant$-parametric graphs, $\m{\leqslant}{\mathfrak{G}_{\geq k}} = \{ \mathscr{G}^{(i)}_{k} \mid i \in [r] \}$.
Moreover, by definition of $\p_{\mathscr{H}}$ (resp. $\p_{\mathfrak{G}}$), it is easy to see that for every $k \in \mathbb{N}$, $\mathcal{G}_{\p_{\mathscr{H}}, k} = \excl(\m{\leqslant}{\mathscr{H}_{\geq k})}$ (resp. $\mathcal{G}_{\p_{\mathfrak{G}}, k} = \excl(\m{\leqslant}{\mathfrak{G}_{\geq k}}$)).

\begin{claim}\label{claim_rational_frak} $\p_{\mathscr{H}} \sim \p_{\mathfrak{G}}$.
\end{claim}
\begin{cproof}
Fix $a \in \mathbb{N}$.
Let $b \in \mathbb{N}$ be minimum such that $\{ \mathscr{H}_{l} \mid l \in [a] \} \subseteq \bigcup_{i \in [r]} \{ \mathscr{G}^{(i)}_{l} \mid l < b \}$.
Observe that by the choice of $b$, $\mathscr{H}_{\geq a}$ contains at least one graph in $\{ \mathscr{G}^{(i)}_{l} \mid l \in [b] \}$, for every $i \in [r]$.
Since the sequences in $\mathfrak{G}$ are $\leqslant$-increasing, it is implied that $\m{\leqslant}{\mathscr{H}_{\geq a}} \leqslant^{*} \m{\leqslant}{\mathfrak{G}_{\geq b}}$.
By \autoref{subobs}, $\mathcal{G}_{\p_{\mathscr{H}}, a} \subseteq \mathcal{G}_{\p_{\mathfrak{G}}, b}$. 
Hence, by \autoref{parcompobs}, $\p_{\mathfrak{G}} \preceq \p_{\mathscr{H}}$.

For the other direction, partition $\mathfrak{G}$ into the sets of sequences $\mathfrak{G}_{1} = \{ \mathscr{G}^{(i)} \mid i \in [r_{1}] \}$ and $\mathfrak{G}_{2} = \{ \mathscr{G}^{(i)} \mid i \in (r_{1}, r]\}$ of $\mathfrak{G}$ whose set has a $\leqslant$-maximum element (the set $\mathfrak{G}_{1}$) and those that do not (the set $\mathfrak{G}_{2}$).
In the same way, partition $\mathfrak{I} \setminus \mathfrak{G}$ into the sets of sequences $\mathfrak{I}_{1} = \{ \mathscr{I}^{(i)} \mid i \in [s_{1}] \}$ and $\mathfrak{I}_{2} = \{ \mathscr{I}^{(i)} \mid i \in (s_{1}, s]\}$ of $\mathfrak{I} \setminus \mathfrak{G}$ that have a $\leqslant$-maximum element (the set $\mathfrak{I}_{1}$) and those that do not (the set $\mathfrak{I}_{2}$).

Let $g \in \mathbb{N}$ be minimum such that the set $\{ \mathscr{G}^{(i)}_{g} \mid i \in [r_{1}] \}$ corresponds to the $\leqslant$-maximum elements of the sequences in $\mathfrak{G}_{1}$.
In the same way, let $q \in \mathbb{N}$ be minimum such that the set $\{ \mathscr{I}^{(i)}_{q} \mid i \in [s_{1}] \}$ corresponds to the $\leqslant$-maximum elements of the sequences in $\mathfrak{I}_{1}$.
Observe that, for every $k \geq g$, $\mathfrak{G}_{\geq k} = \{ \mathscr{G}^{(i)}_{g} \mid i \in [r_{1}] \} \cup \{ \mathscr{G}^{(i)}_{k} \mid i \in (r_{1}, r] \}$.
Additionally observe that, there exists a minimum $h \in \mathbb{N}$ such that
$$\mathscr{H}_{\geq h} \cap \Big( \big( \bigcup_{\mathscr{F} \in \mathfrak{F}} \mathscr F \big) \cup \big( \bigcup_{i \in [r_{1}]} \{ \mathscr{G}^{(i)}_{j} \mid j < g \} \big) \cup \big( \bigcup_{i \in [s_{1}]} \{ \mathscr{I}^{(i)}_{j} \mid j < q \} \big) \Big) = \emptyset.$$

By definition of $h$, it holds that for every $i \in [r_{1}]$ and every $k \geq h$, $\mathscr{G}^{(i)} \cap \mathscr{H}_{\geq k} = \{ \mathscr{G}^{(i)}_{g} \}$,
and that for every $i \in [s_{1}]$ and every $k \geq h$, $\mathscr{I}^{(i)} \cap \mathscr{H}_{\geq k} = \{ \mathscr{I}^{(i)}_{q} \}$.
Moreover, observe that any $\leqslant$-parametric graph that has a $\leqslant$-maximum element, is equivalent (w.r.t. $\approx$) to any other $\leqslant$-parametric graph, if and only if it has a $\leqslant$-maximum element as well and their $\leqslant$-maximum elements are the same graph.
This implies that the equivalence classes of $\approx$ containing a sequence from $\mathfrak{I}_{1}$ are singletons.
This, combined with the fact that $\mathfrak{B}$ partitions the set of $\mathscr{H}$, implies that none of the sequences in $\mathfrak{I}_{1}$ are $\lesssim$-minimal in $\mathfrak{C}$.
Then, for every $k \geq h$, $\m{\leqslant}{\mathscr{H}_{\geq k}} \cap \{ \mathscr{I}^{(i)}_{q} \mid i \in [s_{1}] \} = \emptyset$.

Fix $b \geq g$.
For $m \in (s_{1}, s],$ let $I^{(m)} \subseteq \mathbb{N}$ such that $n \in I^{(m)}$ if $n$ is minimum with the property that there exists $l \in [r]$ such that $\mathscr{G}^{(l)}_{b} \leqslant \mathscr{I}^{(m)}_{n}$.
Notice that, since every sequence in $\mathfrak{G}$ is $\lesssim$-minimal, for every $m \in (s_{1}, s]$, there exists $l \in [r]$ such that $\mathscr{G}^{(l)} \lesssim \mathscr{I}^{(m)}$. 
This implies that $I^{(m)} \neq \emptyset$.
Define $c^{(m)} := \max I^{(m)}$ and $c := \max \{ c^{(m)} \mid m \in (s_{1}, s] \}$.
Then, let $a \geq h$ be minimum such that 
$$\mathscr{H}_{\geq a} \cap \Big( \big( \bigcup_{m \in (r_{1}, r]} \{ \mathscr{G}^{(m)}_{n} \mid n \in [b] \} \big) \cup \big(\bigcup_{m \in (s_{1}, s]} \{ \mathscr{I}^{(m)}_{n} \mid n \in [c] \}\big) \Big) = \emptyset.$$ 

Our goal it to show that $\m{\leqslant}{\mathfrak{G}_{\geq b}} \leqslant^{*} \m{\leqslant}{\mathscr{H}_{\geq a}}$, i.e. for every graph $Z \in \m{\leqslant}{\mathscr{H}_{\geq a}}$ there exists $m \in [r]$ such that $\mathscr{G}^{(m)}_{b} \leqslant Z$.
Then, by \autoref{subobs}, this would imply that $\mathcal{G}_{\p_{\mathfrak{G}}, b} \subseteq \mathcal{G}_{\p_{\mathscr{H}}, a}$ and hence $\p_{\mathscr{H}} \preceq \p_{\mathfrak{G}}$ which concludes the proof.
Recall that $\mathfrak{G}_{\geq b} = \{ \mathscr{G}^{(m)}_{g} \mid m \in [r_{1}] \} \cup \{ \mathscr{G}^{(m)}_{b} \mid m \in [r_{1}, r] \}$.
Consider a graph $Z \in \m{\leqslant}{\mathscr{H}_{\geq a}}$. 
In the case that $Z = \mathscr{G}^{(m)}_{g}$, for some $m \in [r_{1}]$, we immediately conclude.
In the case that $Z = \mathscr{G}^{m}_{n}$, for some $m \in (r_{1}, r]$ and $n > b$, we conclude since $\mathscr{G}^{m}$ is $\leqslant$-increasing.
In the case that $Z = \mathscr{I}^{m}_{n}$, for some $m \in (s_{1}, s]$ and $n > c$, we conclude by definition of $c$.
Notice that, by definition of $a$ and the previous observations, these are the only cases to consider.
\end{cproof}

By the previous claim, $\mathfrak{G}$ is the desired set of sequences $\mathfrak{H}$. \end{proof}

Notice that \autoref{rationalperiodic}, applied to a $\leqslant$-prime sequence $\mathscr{H}$, guarantees the existence of a set $\mathfrak{H} = \{ \mathscr{F} \},$ where $\p_{\mathscr{H}} \sim \p_{\mathscr{F}}$ and $\mathscr{F}$ is a $\leqslant$-parametric graph.
Hence, through the lens of equivalence of their respectful parameters, $\leqslant$-prime sequences have representatives that are $\leqslant$-parametric graphs.

\subsection{From graph parameters to class properties and their class obstructions}\label{from_params_to_props}

In this subsection we associate graph parameters to a particular subclass of class properties in a ``natural'' way that respects their approximate behaviour.
Via this correspondance we establish the notion of a class obstruction for a graph parameter and discuss how this set captures the approximate behaviour of the parameter.

\paragraph{Classes where a parameter is bounded.}

Let $\p$ be a graph parameter. 
We say that $\p$ is \emph{bounded} in a graph class $\mathcal{G}$ if there exists $c \in \mathbb{N}$ such that for every $G \in \mathcal{G},$ $\p(G) \leq c.$
Via the previous definition, we may associate every $\leqslant$-monotone parameter to $\leqslant$-class property as follows.

\begin{definition}[Class property associated to monotone parameter]
For every $\leqslant$-monotone parameter, we define the set $\mathbb{B}_{\leqslant}(\p)$ of $\leq$-closed classes where $\p$ is bounded as follows:
\begin{align*}
\B_{\leqslant}(\p) \ \coloneqq \ \{ \mathcal{G} \in \down{\leqslant}{\gall} \mid \textrm{$\p$ is bounded in $\mathcal{G}$}\}.
\end{align*}
Note that, if $\p$ is bounded in a $\leqslant$-closed class $\mathcal{G},$ then $\p$ is also bounded in any subclass of $\mathcal{G}.$
Therefore $\mathbb{B}_{\leqslant}(\p)$ is a $\leqslant$-class property.
\end{definition}

Dually, for every $\leqslant$-monotone parameter, we may also define the set
\begin{align*}
\UNB_{\leqslant}(\p) \ &\coloneqq \ \{ \mathcal{G} \in \down{\leqslant}{\gall} \mid \text{$\p$ is \emph{unbounded} in $\mathcal{G}$} \}\\
&= \down{\leqslant}{\gall} \setminus \B_{\leqslant}(\p).
\end{align*}
Note that $\UNB_{\leqslant}(\p)$ is upward $\subseteq$-closed in $(\down{\leqslant}{\gall}, \subseteq).$

\paragraph{ The set $\B_{\leqslant}(\p)$ and the approximate behaviour of $\p$.}

Having introduced the $\leqslant$-class property $\mathbb{B}_{\leqslant}(\p),$ for every $\leqslant$-monotone parameter $\p,$ we may observe the following.

\begin{observation}\label{gpk_is_bounded}
Let $\p$ be a $\leqslant$-monotone parameter.
Then, $\mathcal{G}_{\p, k} \in \mathbb{B}_{\leqslant}(\p),$ for every $k \in \Nbbb.$
\end{observation}

The previous observation immediately leads to the following second observation:

\begin{observation}\label{obs_bounded_class}
Let $\p$ be a $\leqslant$-monotone parameter.
Then, $\mathcal{G} \in \B_{\leqslant}(\p)$ if and only if there exists $k \in \mathbb{N}$ such that $\mathcal{G} \subseteq \mathcal{G}_{\p, k}$.
\end{observation}

The following lemma show how comparing parameters approximately via the relation $\preceq$ reduces to checking for inclusion of their corresponding $\leqslant$-class properties.

\begin{lemma}\label{parsimbounded} Let $\p$ and $\p'$ be two $\leqslant$-monotone parameters.
Then, $\p \preceq \p'$ if and only if $\B_{\leqslant}(\p') \subseteq \B_{\leqslant}(\p).$
\end{lemma}
\begin{proof} For the proof of the forward direction, assume that $\p \preceq \p'.$
By \autoref{parcompobs}, there exists a function $f \colon \Nbbb \to \Nbbb$ such that $\mathcal{G}_{\p', k} \subseteq \mathcal{G}_{\p, f(k)},$ for every $k \in \Nbbb.$
Then, by an application of \autoref{obs_bounded_class} for $\p$ and $f(k)$ we have that $\mathcal{G}_{\p', k} \in \mathbb{B}_{\leqslant}(\p),$ for every $k \in \Nbbb.$
By a second application of \autoref{obs_bounded_class} for $\p',$ we conclude that $\mathbb{B}_{\leqslant}(\p') \subseteq \mathbb{B}_{\leqslant}(\p).$

For the reverse direction, assume that $\mathbb{B}_{\leqslant}(\p') \subseteq \mathbb{B}_{\leqslant}(\p).$
By an application of \autoref{gpk_is_bounded} to $\p',$ we have that $\mathcal{G}_{\p', k} \in \mathbb{B}_{\leqslant}(\p),$ for every $k \in \Nbbb.$
An application of \autoref{obs_bounded_class} to $\p$ and $\mathcal{G}_{\p', k} \in \mathbb{B}_{\leqslant}(\p),$ for every $k \in \Nbbb,$ implies the existence of a function $f \colon \Nbbb \to \Nbbb$ such that $\mathcal{G}_{\p', k} \subseteq \mathcal{G}_{\p, f(k)},$ for every $k \in \Nbbb.$
Then, by \autoref{parcompobs}, we conclude that $\p \preceq \p'.$
\end{proof}

A consequence of the lemma above and the fact that $(\down{\leqslant}{\gall}, \subseteq)$ is a partial-ordering is the following corollary.

\begin{corollary}\label{parequivsamebounded}
Let $\p$ and $\p'$ be two $\leqslant$-monotone parameters. Then, $\p \sim \p'$ if and only if $\B(\p) = \B(\p')$.
\end{corollary}

We should remark that the previous corollary implies that, $\leqslant$-monotone parameters in the same equivalence class of $\sim$ are precisely characterized by the same $\leqslant$-class property, in the sense that the $\leqslant$-graph classes where any of the parameters in the equivalence class is bounded are precisely the same.

The previous discussion is a further indication that the $\leqslant$-class properties we associate to $\leqslant$-monotone parameters precisely capture their approximate behaviour.

\paragraph{Class obstruction of a parameter.}

Motivated by the observations above, we define our first obstruction notion for a $\leqslant$-parameter to be the $\leqslant$-class obstruction set of its corresponding $\leqslant$-class property $\mathbb{B}_{\leqslant}(\p).$
Formally:

\begin{definition}[Class obstruction]\label{def:cobs}
For every $\leqslant$-monotone parameter $\p,$ we define the $\leqslant$-\emph{class obstruction} of $\p$ as the $\leqslant$-class obstruction set of $\mathbb{B}_{\leqslant}(\p),$ denoted by $\cobs_{\leqslant}(\p),$ when it exists.

Note that, if $(\gall, \leqslant)$ is a well-quasi-ordering, then the set $\cobs_{\leqslant}(\mathsf{p})$ exists, for every $\leqslant$-monotone parameter $\mathsf{p}.$
\end{definition}

We now know that every (equivalence class of a) $\leqslant$-monotone parameter $\p$ precisely corresponds to the $\leqslant$-class property $\mathbb{B}_{\leqslant}(\p).$
A natural question to ask is whether every $\leqslant$-class property corresponds to some (equivalence class of a) $\leqslant$-monotone parameter.
The answer to this question is negative.
We can observe that, for every $\leqslant$-monotone parameter $\p$, $\mathbb{B}_{\leqslant}(\p)$ belongs to a specific subclass of $\leqslant$-class properties.

\begin{observation}\label{bp_is_directed}
For every $\leqslant$-monotone parameter $\mathsf{p}$, $\mathbb{B}_{\leqslant}(\p)$ is a directed $\leqslant$-class property.
\end{observation}
\begin{proof}
It suffices to observe that $\mathbb{B}_{\leqslant}(\p)$ is closed under union.
Indeed, let $\mathcal{G}$ (resp. $\mathcal{H}$) be a $\leqslant$-closed class where $\p$ is bounded by $c \in \mathbb{N}$ (resp. $c' \in \Nbbb$), for every graph in $\mathcal{G}$ (resp. $\mathcal{H}$).
Then, $\p$ is also bounded in $\mathcal{G} \cup \mathcal{H}$ by $\max\{ c, c' \}.$
\end{proof}

Combining the previous observation with \autoref{wqo_implies_directed_ideal} shows, assuming well-quasi-ordering, the $\leqslant$-class obstruction of every $\leqslant$-monotone parameter is an inclusion-antichain of $\leqslant$-ideals of $\gall.$

\begin{corollary}\label{params_obs_ideals} Let $(\gall, \leqslant)$ be a well-quasi-ordering.
Then, for every $\leqslant$-monotone parameter $\mathsf{p},$ the $\leqslant$-class obstruction of $\mathsf{p}$ is an inclusion-antichain of $\leqslant$-ideals of $\gall.$
\end{corollary}

Later we will see that, under certain order theoretic assumptions, every directed $\leqslant$-class property also corresponds to some suitably defined $\leqslant$-monotone parameter.
Moreover, under the same condition, we will also see how every $\leqslant$-class property corresponds to some finite set of $\leqslant$-monotone parameters.

\subsection{Parametric families}

In this subsection we introduce parametric families and slowly work towards our definition of ``canonical representatives'' for graph parameters.

\medskip
We begin with the introduction of $\leqslant$-parametric families, a notion instrumental for our definition of $\leqslant$-universal obstructions in the coming subsections.

\begin{definition}\label{def:param_collection}
Any countable $\lesssim$-antichain of $\leqslant$-parametric graphs is a $\leqslant$-\emph{parametric family}.

Moreover, to every $\leqslant$-parametric family $\mathfrak{H}$ we associate the $\leqslant$-monotone parameter $\p_{\mathfrak{H}}$ as in \autoref{ofjfhfdpia}.
\end{definition}

As we have done for all previously introduced notions, we shall introduce a quasi-ordering relation $\lesssim^{*}$ on the set of all $\leqslant$-parametric families.
Let $\mathfrak{H}$ and $\mathfrak{F}$ be two $\leqslant$-parametric families.
We write $\mathfrak{H} \lesssim^{*} \mathfrak{F}$ if for every $\leqslant$-parametric graph $\mathscr{F} \in \mathfrak{F}$ there exists a $\leqslant$-parametric graph $\mathscr{H} \in \mathfrak{H}$ such that $\mathscr{H} \lesssim \mathfrak{H},$ i.e., $\lesssim^{*}$ corresponds to the Smyth extension of $\lesssim$ on the set of all $\leqslant$-parametric families.
We also introduce a weaker form of $\lesssim^{*}$.
We write $\mathfrak{H} \lesssim^{*}_{f} \mathfrak{F}$ if there exists a function $f \colon \mathbb{N} \to \mathbb{N}$ such that for every $\leqslant$-parametric graph $\mathscr{F} \in \mathfrak{F}$ there exists a $\leqslant$-parametric graph $\mathscr{H} \in \mathfrak{H}$ such that $\mathscr{H} \lesssim_{f} \mathfrak{H}.$

We also define two equivalence relations on the set of all $\leqslant$-parametric families.
The first one corresponds to the equivalence relation induced by $\lesssim^{*}$ and the second to the equivalence relation induced by $\lesssim^{*}_{f}.$
We write $\mathfrak{H} \equiv^{*} \mathfrak{F}$ if $\mathfrak{H} \lesssim^{*} \mathfrak{F}$ and $\mathfrak{F} \lesssim^{*} \mathfrak{H}$.
We write $\mathfrak{H} \equiv^{*}_{f} \mathfrak{F}$ if there exists a function $f \colon \mathbb{N} \to \mathbb{N}$ such that $\mathfrak{H} \lesssim^{*}_{f} \mathfrak{F}$ and $\mathfrak{F} \lesssim^{*}_{f} \mathfrak{H}$.

\paragraph{Comparing parameteric families.}

One may quickly observe that in the case that we compare two $\leqslant$-parametric families $\mathfrak{H}$ and $\mathfrak{F}$ where $\mathfrak{F}$ is \textsl{finite} then $\mathfrak{H} \lesssim^{*} \mathfrak{F}$ is equivalent to $\mathfrak{H} \lesssim^{*}_{f} \mathfrak{F}$ for some function $f \colon \Nbbb \to \Nbbb.$

\begin{lemma}\label{equiv_equal_equiv_finite}
Let $\mathfrak{H}$ and $\mathfrak{F}$ be two $\leqslant$-parametric families such that $\mathfrak{F}$ is finite.
Then, $\mathfrak{H} \lesssim^{*} \mathfrak{F}$ if and only if there exists a function $f \colon \Nbbb \to \Nbbb$ such that $\mathfrak{H} \lesssim^{*}_{f} \mathfrak{F}.$
\end{lemma}
\begin{proof} We only show the forward direction since the reverse trivially holds by definition.
Assume that $\mathfrak{H} \lesssim^{*} \mathfrak{F}$.
By definition, for every $\mathscr{F} \in \mathfrak{F}$ there exists $\mathscr{H} \in \mathfrak{H}$ such that $\mathscr{H} \lesssim_{f_{\mathscr{F}}} \mathscr{F},$ for some function $f_{\mathscr{F}} \colon \mathbb{N} \to \mathbb{N}.$
This implies that, for every $\mathscr{F} \in \mathfrak{F}$ there exists $\mathscr{H} \in \mathscr{H}$ such that for every $k \in \Nbbb,$ $\mathscr{H}_{k} \leqslant \mathscr{F}_{f_{\mathscr{F}}(k)}.$
Let $f \colon \mathbb{N} \to \mathbb{N}$ be the function such that $f(k) \coloneqq \max\{ f_{\mathscr{F}}(k) \mid \mathscr{F} \in \mathfrak{F} \},$ for every $k \in \Nbbb.$
Since $\mathfrak{F}$ is finite, $f$ is well-defined.
Moreover, since $\mathscr{F}$ is $\leqslant$-increasing, for every $\mathscr{F} \in \mathfrak{H},$ it is implied that, for every $\mathscr{F} \in \mathfrak{F}$ there exists $\mathscr{H} \in \mathfrak{H}$ such that for every $k \in \Nbbb,$ $\mathscr{H}_{k} \leqslant \mathscr{F}_{f(k)}.$
Therefore, $\mathfrak{H} \lesssim^{*}_{f} \mathfrak{F}$ and we conclude.
\end{proof}

We should remark that it is unclear whether $\lesssim^{*}$ and $\lesssim^{*}_{f}$ are equivalent when both $\leqslant$-parametric graphs are infinite.
The reason is that it is unclear whether there exists a function $f \colon \mathbb{N} \to \mathbb{N},$ defined as $f(k) \coloneqq \sup\{ f_{\mathscr{F}}(k) \mid \mathscr{F} \in \mathfrak{F} \},$ for every $k \in \Nbbb,$ precisely because the supremum $\sup\{ f_{\mathscr{F}}(k) \mid \mathscr{F} \in \mathfrak{F} \}$ may not exist.

\medskip
An immediate corollary of the previous lemma is the following.

\begin{corollary}
Let $\mathfrak{H}$ and $\mathfrak{F}$ be two finite $\leqslant$-parametric families.
Then, $\mathfrak{H} \equiv^{*} \mathfrak{F}$ if and only if there exists a function $f \colon \Nbbb \to \Nbbb$ such that $\mathfrak{H} \equiv^{*}_{f} \mathfrak{F}.$
\end{corollary}

The necessity of the restricted definition of $\equiv^{*}_{f}$ arises when attempting to compare the parameters corresponding to any two $\leqslant$-parametric families via $\preceq$ by comparing the $\leqslant$-parametric families via $\lesssim^{*}$ instead.

\begin{lemma}
Let $\mathfrak{H}$ and $\mathfrak{F}$ be two $\leqslant$-parametric families.
Then, $\p_{\mathfrak{H}} \preceq \p_{\mathfrak{F}}$ if and only if there exists a function $f \colon \Nbbb \to \Nbbb$ such that $\mathfrak{F} \lesssim^{*}_{f} \mathfrak{H}.$
\end{lemma}
\begin{proof}
We have that $\p_{\mathfrak{H}} \preceq \p_{\mathfrak{F}}$ if and only if, by \autoref{parcompobs}, there exists a function $f \colon \mathbb{N} \to \mathbb{N}$ such that $\mathcal{G}_{\p_{\mathfrak{F}}, k} \subseteq \mathcal{G}_{\p_{\mathfrak{H}}, f(k)}$ if and only if, by \autoref{ofjfhfdpia}, there exists a function $f \colon \mathbb{N} \to \mathbb{N}$ such that for every $k \in \Nbbb,$ $\excl_{\leqslant}(\{ \mathscr{F}_{k} \mid \mathscr{F} \in \mathfrak{F}\}) \subseteq \excl_{\leqslant}(\{ \mathscr{H}_{f(k)} \mid \mathscr{H} \in \mathfrak{H}\})$ if and only if, by \autoref{subobs}, for every $k \in \Nbbb,$  $\{ \mathscr{F}_{k} \mid \mathscr{F} \in \mathfrak{F}\} \leqslant^{*} \{ \mathscr{H}_{f(k)} \mid \mathscr{H} \in \mathfrak{H} \}$ if and only if, by definition of $\leqslant^{*},$ for every $k \in \Nbbb,$ for every $\mathscr{H} \in \mathfrak{H}$ there exists $\mathscr{F} \in \mathfrak{F}$ such that $\mathscr{F}_{k} \leqslant \mathscr{H}_{f(k)}$ if and only if $\mathfrak{F} \lesssim^{*}_{f} \mathfrak{H}.$
\end{proof}

As an immediate corollary we obtain that:

\begin{corollary}\label{univobsprops}
Let $\mathfrak{H}$ and $\mathfrak{F}$ be two $\leqslant$-parametric families.
Then, $\p_{\mathfrak{H}} \sim \p_{\mathfrak{F}}$ if and only if there exists a function $f \colon \Nbbb \to \Nbbb$ such that $\mathfrak{H} \equiv^{*}_{f} \mathfrak{F}.$
\end{corollary}

As implied by \autoref{equiv_equal_equiv_finite}, we may obtain the following variants of the two previous statements.

\begin{corollary}\label{univobsprops_finite}
Let $\mathfrak{H}$ and $\mathfrak{F}$ be two $\leqslant$-parametric families such that $\mathfrak{H}$ is finite.
Then, $\p_{\mathfrak{H}} \preceq \p_{\mathfrak{F}}$ if and only if $\mathfrak{F} \lesssim^{*} \mathfrak{H}.$
\end{corollary}

\begin{corollary}\label{finite_param_families_equiv_params}
Let $\mathfrak{H}$ and $\mathfrak{F}$ be two finite $\leqslant$-parametric families.
Then, $\p_{\mathfrak{H}} \sim \p_{\mathfrak{F}}$ if and only if $\mathfrak{H} \equiv^{*} \mathfrak{F}.$
\end{corollary}

To conclude the discussion on comparing $\leqslant$-parametric families, the following lemma shows that between equivalent parametric families there is a bijection, mapping the parametric graphs of one to equivalent parametric graphs of the other.

\begin{lemma}\label{univobsbijection}
Let $\mathfrak{H}$ and $\mathfrak{F}$ be two $\leqslant$-parametric families.
Then, $\mathfrak{H} \equiv^{*} \mathfrak{F}$ if and only if there exists a bijection $\rho  \colon  \mathfrak{H} \to \mathfrak{F},$ such that for every $\mathscr{H} \in \mathfrak{H},$ $\mathscr{H} \approx \rho(\mathscr{H}).$
\end{lemma}
\begin{proof}
We only show the forward direction as the reverse is immediate.
Assume that $\mathfrak{H} \equiv^{*} \mathfrak{F}.$ 
We define a bijection $\rho \colon \mathfrak{H} \to \mathfrak{F}$ with the desired properties as follows.
Let $\mathscr{H} \in \mathfrak{H}.$
Since $\mathfrak{F} \lesssim^{*} \mathfrak{H},$ there exists $\mathscr{F} \in \mathfrak{F}$ such that $\mathscr{F} \lesssim \mathscr{H}.$
Now, for this $\mathscr{F},$ since $\mathfrak{H} \lesssim^{*} \mathfrak{F},$ there exists $\mathscr{H}' \in \mathfrak{H}$ such that $\mathscr{H}' \lesssim \mathscr{F}.$
However, it must be that $\mathscr{H}' \approx \mathscr{F} \approx \mathscr{H},$ since $\mathfrak{H}$ is an $\lesssim$-antichain.
Then, it suffices to define $f(\mathscr{H}) \coloneqq \mathscr{F}.$
\end{proof}

We can prove the corresponding statement for $\equiv^{*}_{f}$ as well.
The identical proof is ommited.

\begin{lemma}\label{univobsbijection_func}
Let $\mathfrak{H}$ and $\mathfrak{F}$ be two $\leqslant$-parametric families.
Then, there exists a function $f \colon \Nbbb \to \Nbbb$ such that, $\mathfrak{H} \equiv^{*}_{f} \mathfrak{F}$ if and only if there exists a bijection $\rho  \colon  \mathfrak{H} \to \mathfrak{F},$ such that for every $\mathscr{H} \in \mathfrak{H},$ $\mathscr{H} \approx_{f} \rho(\mathscr{H}).$
\end{lemma}

\paragraph{Parametric families and the class obstructions of their parameters.}

We first present a lemma that precisely determines the $\leqslant$-closed classes where $\p_{\mathscr{H}}$ is bounded, for any graph sequence $\mathscr{H}$.

\begin{lemma}\label{simplebounded}
Let $\mathscr{H}$ be a graph sequence. Then
\begin{enumerate}
\item $\B(\p_{\mathscr{H}}) = \closure{\subseteq}{\{ \excl(\m{\leqslant}{\mathscr{H}_{\geq k})} \mid k \in \mathbb{N} \}}$ and
\item  $\B(\p_{\mathscr{H}}) = \{ \mathcal{G} \in \down{\leqslant}{\gall} \mid \exists k \in \mathbb{N} : \mathcal{G} \subseteq \excl(\m{\leqslant}{\mathscr{H}_{\geq k})} \}.$
\end{enumerate}
\end{lemma}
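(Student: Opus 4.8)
The plan is to unwind the definition of $\p_{\mathscr{H}}$ and translate it into a statement about $\subseteq$-closure inside $\down(\gall)$. First I would recall that $\B(\p_{\mathscr{H}})$, by definition, is the set of all $\mathcal{G} \in \down(\gall)$ on which $\p_{\mathscr{H}}$ is bounded, i.e.\ such that there is some $c \in \mathbb{N}$ with $\p_{\mathscr{H}}(G) \le c$ for all $G \in \mathcal{G}$. I would then observe (as is noted in the proof of \autoref{rationalperiodic}) that for every $k \in \mathbb{N}$ we have $\mathcal{G}_{\p_{\mathscr{H}}, k} = \excl(\m(\mathscr{H}_{\ge k}))$: indeed $\p_{\mathscr{H}}(G) \le k$ iff $k+1$ is not the minimum index with $\m(\mathscr{H}_{\ge k+1}) \nleq^{*} \{G\}$, which after chasing \eqref{thif0lkf} and monotonicity of $\m(\mathscr{H}_{\ge \cdot})$ under $\le^{*}$ is equivalent to $\m(\mathscr{H}_{\ge k}) \le^{*} \{G\}$ failing to... — more precisely, $\p_{\mathscr{H}}(G) \le k \iff G \in \excl(\m(\mathscr{H}_{\ge k}))$, which I would state cleanly using \autoref{exclminorssmith}/\autoref{subobs}.

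Granting that identity, the second bullet is essentially immediate: $\p_{\mathscr{H}}$ is bounded on $\mathcal{G}$ by some constant $c$ exactly when $\mathcal{G} \subseteq \mathcal{G}_{\p_{\mathscr{H}}, c} = \excl(\m(\mathscr{H}_{\ge c}))$, so
\[
\B(\p_{\mathscr{H}}) = \{ \mathcal{G} \in \down(\gall) \mid \exists k \in \mathbb{N} : \mathcal{G} \subseteq \excl(\m(\mathscr{H}_{\ge k})) \}.
\]
For the first bullet I would then just note that the right-hand side above is by definition the downward $\subseteq$-closure, within $\down(\gall)$, of the family $\{ \excl(\m(\mathscr{H}_{\ge k})) \mid k \in \mathbb{N} \}$ — each $\excl(\m(\mathscr{H}_{\ge k}))$ is itself a member of $\down(\gall)$ by the remark following the definition of $\excl_{\le}$, so $\closure{}{\{ \excl(\m(\mathscr{H}_{\ge k})) \mid k \in \mathbb{N} \}}$ taken in the poset $(\down(\gall), \subseteq)$ is precisely that set of $\mathcal{G}$'s. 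The two bullets are thus the same statement written two ways, and the only real content is the identity $\mathcal{G}_{\p_{\mathscr{H}}, k} = \excl(\m(\mathscr{H}_{\ge k}))$.

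The step I expect to require the most care is exactly that identity, i.e.\ correctly matching the ``$\min\{k \mid \m(\mathscr{H}_{\ge k}) \nleq^{*} \{G\}\}$'' in \eqref{thif0lkf} against ``$\le k$'' versus ``$< k$'' versus ``$\le k+1$'' — one has to use that $k \mapsto \m(\mathscr{H}_{\ge k})$ is non-decreasing in the $\le^{*}$ order (so the set of $k$ with $\m(\mathscr{H}_{\ge k}) \le^{*} \{G\}$ is a down-set of $\mathbb{N}$), and that $\excl(\m(\mathscr{H}_{\ge k})) = \{G \mid \m(\mathscr{H}_{\ge k}) \nleq^{*} \{G\}\}$ unwinds to $\{G \mid \forall H \in \m(\mathscr{H}_{\ge k}): H \nleq G\}$ by the definition of $\le^{*}$ applied to the singleton $\{G\}$. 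Once the off-by-one is pinned down this is routine; I would present it as a one-line computation rather than belabor it, and then conclude both bullets in two short sentences as above.
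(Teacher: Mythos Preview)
Your proposal is correct and follows essentially the same route as the paper: both reduce the claim to the identity $\mathcal{G}_{\p_{\mathscr{H}},k}=\excl(\m(\mathscr{H}_{\ge k}))$, then observe that boundedness on $\mathcal{G}$ is equivalent to $\mathcal{G}\subseteq\mathcal{G}_{\p_{\mathscr{H}},k}$ for some $k$, and finally note that the two bullets are the same by definition of downward $\subseteq$-closure in $(\down(\gall),\subseteq)$. The paper simply asserts the key identity ``by definition of $\p_{\mathscr{H}}$'' without unpacking the off-by-one you worry about, but your more careful unwinding (using monotonicity of $k\mapsto\excl(\m(\mathscr{H}_{\ge k}))$) is a welcome elaboration and does not change the structure of the argument.
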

\begin{proof}

We may quickly observe that the fact that $\closure{\subseteq}{\{ \excl(\m{\leqslant}{\mathscr{H}_{\geq k})} \mid k \in \mathbb{N} \}} = \{ \mathcal{G} \in \down{\leqslant}{\gall} \mid \exists k \in \mathbb{N} : \mathcal{G} \subseteq \excl(\m{\leqslant}{\mathscr{H}_{\geq k})} \}$ follows directly from the definition of downward $\subseteq$-closure for the set $\{ \excl(\m{\leqslant}{\mathscr{H}_{\geq k})} \mid k \in \mathbb{N} \}$ in $\down{\leqslant}{\gall}$.
Now, by \autoref{obs_bounded_class}, $\mathcal{G} \in \B(\p_{\mathscr{H}})$ if and only if $\mathcal{G} \subseteq \mathcal{G}_{\p_{\mathscr{H}}, k}$ for some $k \in \mathbb{N}.$
Then, our claim follows, since by \autoref{thif0lkf}, $\mathcal{G}_{\p_{\mathscr{H}}, k} = \excl(\m{\leqslant}{\mathscr{H}_{\geq k})}.$
\end{proof}

We continue with a second lemma on how the comparison of two $\leqslant$-parameteric graphs via the quasi-ordering relation $\lesssim$ we have defined on the set of $\leqslant$-parametric graphs, translates to the comparison of their corresponding parameters via $\preceq,$ and to the comparison of their respective $\leqslant$-closures in terms of inclusion.

\begin{lemma}\label{obsprime} Let $\mathscr{H}$ and $\mathscr{F}$ be two $\leqslant$-parametric graphs.
Then, the following statements are equivalent:
\begin{enumerate}
\item $\p_{\mathscr{H}} \succeq \p_{\mathscr{F}};$
\item $\closure{\leqslant}{\mathscr{H}} \subseteq \closure{\leqslant}{\mathscr{F}};$
\item $\mathscr{H} \lesssim \mathscr{F}.$
\end{enumerate}
\end{lemma}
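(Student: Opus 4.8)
The plan is to prove the three statements equivalent by establishing the cycle $(3)\Rightarrow(2)\Rightarrow(1)\Rightarrow(3)$, using the machinery already set up for prime sequences. Recall that, since $\mathscr{H}$ and $\mathscr{F}$ are prime (hence, by the remark following \autoref{rationalperiodic}, we may take them monotone), we have $\m(\mathscr{H}_{\geq k}) = \{\mathscr{H}_k\}$ and $\m(\mathscr{F}_{\geq k}) = \{\mathscr{F}_k\}$, and by \eqref{thif0lkf} together with the computation in \autoref{simplebounded}, $\mathcal{G}_{\p_{\mathscr{H}},k} = \excl(\{\mathscr{H}_k\})$ and similarly for $\mathscr{F}$.

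For $(3)\Rightarrow(2)$: assume $\mathscr{H} \lesssim \mathscr{F}$, so there is $f:\mathbb{N}\to\mathbb{N}$ with $\mathscr{H}_k \leq \mathscr{F}_{f(k)}$ for all $k$. Take any $G \in \closure{}{\mathscr{H}}$, i.e. $G \leq \mathscr{H}_k$ for some $k$; then $G \leq \mathscr{H}_k \leq \mathscr{F}_{f(k)}$ by transitivity, so $G \in \closure{}{\mathscr{F}}$. For $(2)\Rightarrow(1)$: assume $\closure{}{\mathscr{H}} \subseteq \closure{}{\mathscr{F}}$. I want $\p_{\mathscr{H}} \succeq \p_{\mathscr{F}}$, i.e. $\p_{\mathscr{F}} \preceq \p_{\mathscr{H}}$, which by \autoref{parcompobs} amounts to showing that for every $k$ there is $k'$ with $\mathcal{G}_{\p_{\mathscr{H}},k} \subseteq \mathcal{G}_{\p_{\mathscr{F}},k'}$. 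Since $\mathscr{H}_k \in \closure{}{\mathscr{H}} \subseteq \closure{}{\mathscr{F}}$, there is some index $k'$ with $\mathscr{H}_k \leq \mathscr{F}_{k'}$; then $\{\mathscr{F}_{k'}\} \leq^{*} \{\mathscr{H}_k\}$ and \autoref{subobs} gives $\excl(\{\mathscr{H}_k\}) \subseteq \excl(\{\mathscr{F}_{k'}\})$, that is $\mathcal{G}_{\p_{\mathscr{H}},k} \subseteq \mathcal{G}_{\p_{\mathscr{F}},k'}$, as desired.

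For $(1)\Rightarrow(3)$: assume $\p_{\mathscr{H}} \succeq \p_{\mathscr{F}}$, i.e. $\p_{\mathscr{F}} \preceq \p_{\mathscr{H}}$. By \autoref{parcompobs}, for every $k$ there is $g(k)$ with $\mathcal{G}_{\p_{\mathscr{H}},k} \subseteq \mathcal{G}_{\p_{\mathscr{F}},g(k)}$, i.e. $\excl(\{\mathscr{H}_k\}) \subseteq \excl(\{\mathscr{F}_{g(k)}\})$. By \autoref{subobs} this is equivalent to $\{\mathscr{F}_{g(k)}\} \leq^{*} \{\mathscr{H}_k\}$, which by the definition of $\leq^{*}$ means exactly $\mathscr{F}_{g(k)} \leq \mathscr{H}_k$. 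Now I need a function witnessing $\mathscr{H} \lesssim \mathscr{F}$, i.e. mapping each $k$ to an index $f(k)$ with $\mathscr{H}_k \leq \mathscr{F}_{f(k)}$; the inequality I just derived points the wrong way. The fix is to exploit monotonicity of $\mathscr{F}$ and the fact that $\p_{\mathscr{F}} \preceq \p_{\mathscr{H}}$ also means $\mathcal{G}_{\p_{\mathscr{F}},\cdot}$ is "cofinal below" $\mathcal{G}_{\p_{\mathscr{H}},\cdot}$: because $\mathscr{F}$ is monotone, $\excl(\{\mathscr{F}_m\}) \subseteq \excl(\{\mathscr{F}_{m+1}\})$, so $\bigcup_m \mathcal{G}_{\p_{\mathscr{F}},m}$ is an increasing union, and $\mathscr{H}_k$, viewed through $\p_{\mathscr{H}}$, must eventually be excluded — more precisely, $\mathscr{H}_k \notin \excl(\{\mathscr{H}_{k+1}\})$ is false in general, so instead I argue: $\mathscr{H}_k \in \mathcal{G}_{\p_{\mathscr{H}},k}$ would force $\mathscr{H}_k \in \mathcal{G}_{\p_{\mathscr{F}},g(k)} = \excl(\{\mathscr{F}_{g(k)}\})$ only if $\mathscr{H}_k \leq \mathscr{H}_k$ places it there, which it does not since $\mathscr{H}_k \leq \mathscr{H}_k$ means $\mathscr{H}_k \notin \excl(\{\mathscr{H}_k\})$. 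So take $k+1$: $\mathscr{H}_k \leq \mathscr{H}_{k+1}$ means $\mathscr{H}_k \notin \mathcal{G}_{\p_{\mathscr{H}},k+1}$? No — $\mathscr{H}_k \in \excl(\{\mathscr{H}_{k+1}\})$ iff $\mathscr{H}_{k+1} \nleq \mathscr{H}_k$, which holds when the sequence is strictly increasing but not in general. The clean route, and the one I expect to be the crux, is: for each $k$, since $\mathscr{H}_k$ has finite $\p_{\mathscr{H}}$-value (it lies in $\dom(\p_{\mathscr{H}})$, as $\p_{\mathscr{H}}(\mathscr{H}_k) \leq k+1$ say), $\p_{\mathscr{F}} \preceq \p_{\mathscr{H}}$ gives $\p_{\mathscr{F}}(\mathscr{H}_k) \leq f(\p_{\mathscr{H}}(\mathscr{H}_k)) =: f(k)$, and by \autoref{prime_col_new} (single-sequence case) $\p_{\mathscr{F}}(\mathscr{H}_k) \leq f(k)$ means $\mathscr{F}_{f(k)} \nleq \mathscr{H}_k$ — again the wrong direction. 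The actual correct statement must be symmetric to $(2)\Rightarrow(1)$: I will instead prove $(1)\Rightarrow(2)$ directly (showing $\closure{}{\mathscr{H}} \subseteq \closure{}{\mathscr{F}}$ from $\p_{\mathscr{H}} \succeq \p_{\mathscr{F}}$), and separately $(2)\Rightarrow(3)$, so that the genuine cycle is $(3)\Rightarrow(2)\Rightarrow(1)\Rightarrow(2)$ together with $(2)\Leftrightarrow(3)$.

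So the revised, cleaner plan: establish $(2)\Leftrightarrow(3)$ first — the forward direction is the $(3)\Rightarrow(2)$ argument above, and the reverse $(2)\Rightarrow(3)$ uses that for each $k$, $\mathscr{H}_k \in \closure{}{\mathscr{F}}$ yields an index $f(k)$ with $\mathscr{H}_k \leq \mathscr{F}_{f(k)}$, which is precisely $\mathscr{H}\lesssim\mathscr{F}$. Then establish $(1)\Leftrightarrow(2)$: one direction is the $(2)\Rightarrow(1)$ argument above; for the other, from $\p_{\mathscr{F}}\preceq\p_{\mathscr{H}}$ and \autoref{parcompobs} I get for each $k$ an index $k'$ with $\excl(\{\mathscr{H}_k\})=\mathcal{G}_{\p_{\mathscr{H}},k}\subseteq\mathcal{G}_{\p_{\mathscr{F}},k'}=\excl(\{\mathscr{F}_{k'}\})$, hence by \autoref{subobs} $\mathscr{F}_{k'}\leq\mathscr{H}_k$, i.e. $\mathscr{F}\lesssim\mathscr{H}$ — and then I must show this last fact forces $\closure{}{\mathscr{H}}\subseteq\closure{}{\mathscr{F}}$, which it does not on its own; the missing ingredient is that $\p_{\mathscr{H}}\succeq\p_{\mathscr{F}}$ gives $\mathscr{F}\lesssim\mathscr{H}$ while $\p_{\mathscr{H}}\preceq\p_{\mathscr{H}}$ is trivial, so the asymmetry means I should revisit whether $(1)$ as stated ($\p_{\mathscr{H}}\succeq\p_{\mathscr{F}}$) corresponds to $\closure{}{\mathscr{H}}\subseteq\closure{}{\mathscr{F}}$ or the reverse inclusion; tracing the definitions, larger $\p$ means smaller $\mathcal{G}_{\p,k}$ means smaller $\excl$ means the sequence "grows faster" means larger downward closure, so $\p_{\mathscr{H}}\succeq\p_{\mathscr{F}}$ should indeed give $\closure{}{\mathscr{F}}\subseteq\closure{}{\mathscr{H}}$, i.e. statement (2) as written may have the inclusion reversed relative to my reading — so the main obstacle is getting the directions of all three equivalences mutually consistent, after which each individual implication is a one-line application of \autoref{subobs}, \autoref{parcompobs}, and transitivity of $\leq$. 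I will fix the direction by carefully instantiating $G=\mathscr{H}_k$ and $G=\mathscr{F}_k$ as the witnessing graphs and reading off which closure contains which.
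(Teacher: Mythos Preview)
Your original plan --- the cycle $(3)\Rightarrow(2)\Rightarrow(1)\Rightarrow(3)$ --- is correct and is essentially what the paper does (the paper runs the cycle $1\Rightarrow 2\Rightarrow 3\Rightarrow 1$, but the individual implications are the same). The confusion that derails you in $(1)\Rightarrow(3)$ is a pure sign error in applying \autoref{subobs}. That observation reads: $\excl(\mathcal{H}) \subseteq \excl(\mathcal{G})$ if and only if $\mathcal{H} \leq^{*} \mathcal{G}$. Hence from $\excl(\{\mathscr{H}_k\}) \subseteq \excl(\{\mathscr{F}_{g(k)}\})$ you obtain $\{\mathscr{H}_k\} \leq^{*} \{\mathscr{F}_{g(k)}\}$, which unpacks (by the definition of $\leq^{*}$) to $\mathscr{H}_k \leq \mathscr{F}_{g(k)}$ --- precisely the inequality witnessing $\mathscr{H} \lesssim \mathscr{F}$. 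You instead wrote $\{\mathscr{F}_{g(k)}\} \leq^{*} \{\mathscr{H}_k\}$ and concluded $\mathscr{F}_{g(k)} \leq \mathscr{H}_k$; that swap is what sent you into the long detour about cofinality, strict monotonicity, and whether the inclusion in (2) is written the right way round. It is.

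The same direction slip is already latent in your $(2)\Rightarrow(1)$ argument: from $\mathscr{H}_k \leq \mathscr{F}_{k'}$ the correct translation is $\{\mathscr{H}_k\} \leq^{*} \{\mathscr{F}_{k'}\}$, not $\{\mathscr{F}_{k'}\} \leq^{*} \{\mathscr{H}_k\}$; you reached the right conclusion there only because you simultaneously reversed the direction of the \autoref{subobs} implication, so the two errors cancelled. Once you keep straight that $a \leq^{*} b$ means ``every element of $b$ lies above some element of $a$'' and that in \autoref{subobs} the left-hand set of the $\excl$-inclusion is the left-hand set of the $\leq^{*}$-relation, each of the three implications is a one-line computation and no further ``fixing of directions'' is needed.
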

\begin{proof}

We first show how \emph{1.} implies \emph{2.}.
Suppose that $G$ is a graph of $\closure{\leqslant}{\mathscr{H}}$. 
Then, there exists $k\in\mathbb{N}$ such that $G \leqslant \mathscr{H}_k$. 
It follows that $\{G\}\leqslant^* \{\mathscr{H}_k\}$. 
In turn, by \autoref{subobs}, this implies that $\excl_{\leqslant}(\{G\})\subseteq \excl_{\leqslant}(\{\mathscr{H}_k\})$. 
Since $\mathscr{H}$ is $\leqslant$-increasing, we have that $\excl_{\leqslant}(\{\mathscr{H}_k\}) = \excl_{\leqslant}(\m{\leqslant}{\mathscr{H}_{\geq k})}$. 
By \autoref{simplebounded}, we obtain that $\excl_{\leqslant}(\{G\}) \in \B_{\leqslant}(\p_{\mathscr{H}})$. 
Since $\p_{\mathscr{H}} \succeq \p_{\mathscr{F}}$, by \autoref{parsimbounded}, we have $\B_{\leqslant}(\p_{\mathscr{H}}) \subseteq \B_{\leqslant}(\p_{\mathscr{F}})$ and thereby $\excl_{\leqslant}(\{G\}) \in \B_{\leqslant}(\p_{\mathscr{F}})$.
By \autoref{simplebounded}, there exists $k' \in \mathbb{N}$ such that $\excl_{\leqslant}(\{G\}) \subseteq \excl_{\leqslant}(\m{\leqslant}{\mathscr{F}_{\geq k'})}$.
Again, since $\mathscr{F}$ is $\leqslant$-increasing, we have $\m{\leqslant}{\mathscr{F}_{\geq k'}} = \{\mathscr{F}_{k'}\}$. 
Thereby, by \autoref{subobs}, $\excl_{\leqslant}(\{G\})\subseteq \excl_{\leqslant}(\{\mathscr{F}_{k'}\})$ implies $\{G\}\leqslant^*\{\mathscr{F}_{k'}\}$. 
It follows that there exists $k'\in\mathbb{N}$ such that $G\leqslant \mathscr{F}_{k'}$, in other words, $G \in \closure{\leqslant}{\mathscr{F}}$.

Now, we show how \emph{2.} implies \emph{3.}
For every $i \in \mathbb{N},$ $\mathscr{H}_{i} \in \closure{\leqslant}{\mathscr{H}}.$ 
Since $\mathscr{H}_{i} \in \closure{\leqslant}{\mathscr{F}},$ by definition there exists $j \in \mathbb{N}$ such that $\mathscr{H}_{i} \leqslant \mathscr{F}_{j}.$

To conclude, we show how \emph{3.} implies \emph{1.}
Let $\mathcal{G} \in \B_{\leqslant}(\p_{\mathscr{H}}).$ 
By \autoref{simplebounded}, there exists $i \in \mathbb{N}$ such that $\mathcal{G} \subseteq \excl_{\leqslant}(\{ \mathscr{H}_{i}\}).$ 
By assumption, there exists $j \in \mathbb{N}$ such that $\mathscr{H}_{i} \leqslant \mathscr{F}_{j},$ equivalently $\{ \mathscr{H}_{i} \} \leqslant^{*} \{ \mathscr{F}_{j} \}.$ 
By \autoref{subobs}, we have $\excl_{\leqslant}(\{ \mathscr{H}_{i} \}) \subseteq \excl_{\leqslant}(\{ \mathscr{F}_{j} \}),$ which implies that $\mathcal{G} \subseteq \excl_{\leqslant}(\{ \mathscr{F}_{j} \})$. 
This implies that $\mathcal{G} \in \B_{\leqslant}(\p_{\mathscr{F}}),$ therefore  $\p_{\mathscr{H}} \succeq \p_{\mathscr{F}}$ follows from \autoref{parsimbounded}.
\end{proof}

The following two lemmas precisely characterize the class obstruction of parameters associated to parametric graphs and, in turn, to finite parametric families as the set of $\leqslant$-closed classes that correspond to the $\leqslant$-closures of the involved $\leqslant$-parametric graphs.

\begin{lemma}\label{primeunbbasis} Let $\mathscr{H}$ be a $\leqslant$-parametric graph.
Then,
$$\cobs_{\leqslant}(\p_{\mathscr{H}}) = \{ \closure{\leqslant}{\mathscr{H}} \}.$$
\end{lemma}
\begin{proof}
By definition of $\p_{\mathscr{H}}$ and the fact that $\mathscr{H}$ is a $\leqslant$-parametric graph, we have that $\closure{\leqslant}{\mathscr{H}} \in \UNB_{\leqslant}(\p_{\mathscr{H}})$.
Now, consider any $\mathcal{G} \in \UNB_{\leqslant}(\p_{\mathscr{H}})$.
Then for every $k \in \mathbb{N}$, there exists a graph $G \in \mathcal{G}$ such that $\p_{\mathscr{H}}(G) > k.$
This implies that $\mathscr{H}_{k} \leqslant G.$
Since $\mathcal{G} \in \down{\leqslant}{\gall}$, $\mathscr{H}_k\in\mathcal{G}$ and thereby $ \closure{\leqslant}{\mathscr{H}} \subseteq \mathcal{G}$. 
Hence $\UNB_{\leqslant}(\p_{\mathscr{H}}) = \upclosure{\subseteq}{(\closure{\leqslant}{\mathscr{H}})}.$
\end{proof}

\begin{lemma}\label{primcollunbbasis} Let $\mathfrak{H}$ be a finite $\leqslant$-parametric family.
Then,
$$\cobs_{\leqslant}(\p_{\mathfrak{H}}) = \bigcup_{\mathscr{H} \in \mathfrak{H}} \{\{ \closure{\leqslant}{\mathscr{H}} \}\}.$$
\end{lemma}
\begin{proof}
One may observe that $\B_{\leqslant}(\p_{\mathfrak{H}}) = \bigcap_{\mathscr{H} \in \mathfrak{H}} \B_{\leqslant}(\p_{\mathscr{H}})$.
Let $\mathcal{G} \in \B_{\leqslant}(\p_{\mathfrak{H}}).$
By definition of $\p_{\mathfrak{H}},$ there exists $k \in \Nbbb$ such that $\mathcal{G} \subseteq \excl_{\leqslant}(\{ \mathscr{H}_{k} \mid \mathscr{H} \in \mathfrak{H} \})$.
This in turn implies that, $\mathcal{G} \subseteq \excl_{\leqslant}({\{ \mathscr{H}_{k} \}}),$ for every $\mathscr{H} \in \mathfrak{H}.$
Then, by definition of $\p_{\mathscr{H}},$ $\mathcal{G} \in \mathbb{B}_{\leqslant}(\p_{\mathscr{H}}),$ for every $\mathscr{H} \in \mathfrak{H}$ and therefore $\mathcal{G} \in \bigcap_{\mathscr{H} \in \mathfrak{H}} \B(\p_{\mathscr{H}}).$
For the reverse, let $\mathcal{G} \in \bigcap_{\mathscr{H} \in \mathfrak{H}} \B_{\leqslant}(\p_{\mathscr{H}}).$
Then, for every $\mathscr{H} \in \mathfrak{H},$ by definition of $\p_{\mathscr{H}},$ there exists $k_{\mathscr{H}} \in \Nbbb$ such that $\mathcal{G} \subseteq \excl_{\leqslant}(\{ \mathscr{H}_{k_{\mathscr{H}}} \}).$
Since $\mathfrak{H}$ is finite, we may define $k \coloneqq \max\{ k_{\mathscr{H}} \mid \mathscr{H} \in \mathfrak{H} \}$.
Then, since every $\mathscr{H} \in \mathfrak{H}$ is $\leqslant$-increasing, we have that $\mathcal{G} \subseteq \excl_{\leqslant}(\{ \mathscr{H}_{k} \mid \mathscr{H} \in \mathfrak{H} \}).$
This in turn shows that, by definition of $\p_{\mathfrak{H}},$ $\mathcal{G} \in \B_{\leqslant}(\p_{\mathfrak{H}}).$

Then, we have that $\UNB_{\leqslant}(\p_{\mathfrak{H}}) = \bigcup_{\mathscr{H} \in \mathfrak{H}} \UNB_{\leqslant}(\p_{\mathscr{H}}).$
Since $\mathfrak{H}$ is a $\lesssim$-antichain, by  \autoref{obsprime}, we have that $\bigcup_{\mathscr{H} \in \mathfrak{H}} \{\{ \closure{\leqslant}{\mathscr{H}} \}\}$ is an $\subseteq$-antichain. 
Hence by \autoref{primeunbbasis}, it is the inclusion-basis of $\UNB_{\leqslant}(\p_{\mathfrak{H}}).$
\end{proof}

The situation with infinite $\leqslant$-parametric families is more tricky.
We will see more on infinite $\leqslant$-parametric families later.

\paragraph{Two examples of minor-parametric families.}

Recall the definition of the complete ternary trees $\mathscr{T}=\langle \mathscr{T}_1,\mathscr{T}_2,\mathscr{T}_3,\mathscr{T}_4,\dots\rangle$.
The singleton $\mathfrak{T}=\{\mathscr{T}\}$, is trivially a minor-parametric family.
Moreover, it is easy to see that every forest is a minor of a large enough complete ternary tree.

\begin{observation}\label{prop_tree_omnivore}
For every forest $F$, there exists a constant $c_{F} \in \Nbbb$, such that $F$ is a minor of the 
complete ternary tree $\mathscr{T}_{c_{F}}$.
\end{observation}

Since every complete ternary tree is itself a forest, i.e., an acyclic graph, \autoref{prop_tree_omnivore} implies that the class $\gforest$ of all forests is exactly the set of all minors of complete ternary trees, i.e. $\closure{\leqslant_{\mathsf{m}}}{\mathscr{T}} = \gforest$.
Hence, \autoref{primcollunbbasis} implies that the $\leqslant$-class obstruction $\p_{\mathfrak{T}}$ is exactly the set containing the class of forests, i.e. $\{ \gforest \}$.

An \emph{apex forest} is a graph containing a vertex whose removal yields a forest. We use $\gfapex$ to denote the class of all apex forests.
An \emph{outerplanar} graph is a graph that has a plane embedding where all its vertices are incident to the same face.
We use $\gouterplanar$ to denote the class of outerplanar graphs.
 
\begin{observation}\label{prop_outer_omnivore} For every apex forest $D$ (resp. outerplanar  graph $Q$), there exists a constant $c_{D} \in \Nbbb$ (resp. $c_{Q} \in \Nbbb$), such that $D$ (resp. $Q$) is a minor of $\mathscr{T}^{a}_{c_{D}}$ (resp. $\mathscr{T}^{a*}_{c_{Q}}$).
\end{observation}

Notice that all graphs in $\mathscr{T}^{a}$ are apex forests, while all graphs in $\mathscr{T}^{a*}$ are outerplanar. 
Moreover, notice that $\mathfrak{B} = \{\mathscr{T}^a,\mathscr{T}^{a*}\}$ is a minor-parametric family.
As in the case of forests, combining this fact with \autoref{prop_outer_omnivore}, we have that the $\leqslant$-class obstruction of $\p_{\mathfrak{B}}$ is exactly the set containing the class of outerplanar graphs and the class of apex forests, i.e., the set $\{\gfapex, \gouterplanar\}$.

\subsection{Universal obstructions}
\label{univ_on4td}

We are now in the position to describe the main obstruction notion for graph parameters of our parametric viewpoint that will allow us to express graph parameters in a ``canonical obstructing'' form, via the exclusion of a set of parametric graphs.
We furthermore examine sufficient order theoretic conditions that ensure the existence of universal obstructions.
We also introduce the notion of \textsl{parametric obstruction} for a graph parameter, which is a variant of its class obstruction.

\medskip
We first introduce the definition of a $\leqslant$-universal obstruction for a graph parameter.

\begin{definition}[Universal obstruction]\label{def:univ_obs}
A $\leqslant$-parametric family $\mathfrak{H}$ is a $\leqslant$-universal obstruction for a graph parameter $\p$ if $\p$ and $\p_{\mathfrak{H}}$ are equivalent.
\end{definition}

Building on the results of the previous subsection, we show that, for any $\leqslant$-monotone parameter that admits a finite $\leqslant$-universal obstruction, its $\leqslant$-class obstruction exists and is closely related to the $\leqslant$-parametric graphs contained in its $\leqslant$-universal obstruction.

\begin{theorem}\label{uobs_to_cobs} Let $\p$ be a $\leqslant$-monotone parameter.
If $\p$ has a finite $\leqslant$-universal obstruction $\mathfrak{H}$ then
$$\cobs_{\leqslant}(\p) = \bigcup_{\mathscr{H} \in \mathfrak{H}} \{\{ \closure{\leqslant}{\mathscr{H}} \}\}.$$
\end{theorem}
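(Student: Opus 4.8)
The statement asserts that if $\p$ has a universal obstruction $\frak{H}$, then the class obstruction $\cobs(\p)$ equals $\bigcup_{\mathscr{H} \in \frak{H}} \{ \closure{}{\mathscr{H}} \}$. By definition, a universal obstruction $\frak{H}$ satisfies $\p_{\frak{H}} \sim \p$, and $\cobs(\p)$ is defined as the basis of $\UNB(\p)$ when it exists. So the proof reduces to two things: (i) showing that $\UNB(\p)$ has a basis, and (ii) identifying it with $\bigcup_{\mathscr{H} \in \frak{H}} \{ \closure{}{\mathscr{H}} \}$.

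\textbf{Key steps.} First I would invoke the consequence of \autoref{parsimbounded} noted right after it: since $\p \sim \p_{\frak{H}}$, we have $\B(\p) = \B(\p_{\frak{H}})$, and dually $\UNB(\p) = \UNB(\p_{\frak{H}})$ (both are complements in $\down(\gall)$). Second, I would apply \autoref{primcollunbbasis}, which states precisely that for a prime collection $\frak{H}$, the set $\bigcup_{\mathscr{H} \in \frak{H}} \{ \closure{}{\mathscr{H}} \}$ is the basis of $\UNB(\p_{\frak{H}})$. Combining these, $\bigcup_{\mathscr{H} \in \frak{H}} \{ \closure{}{\mathscr{H}} \}$ is the basis of $\UNB(\p)$. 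Third, since a basis, when it exists, is unique (as observed in the text before the definition of class obstruction — it is the unique $\subseteq$-antichain $\mathbb{M}$ with $\UNB(\p) = \upclosure{}{\mathbb{M}}$), this basis is exactly $\cobs(\p)$ by definition. This completes the argument.

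\textbf{Main obstacle.} There is essentially no hard step here — the theorem is a clean corollary assembled from \autoref{parsimbounded} and \autoref{primcollunbbasis}. The only point requiring a little care is making explicit that $\UNB(\p) = \UNB(\p_{\frak{H}})$ follows from $\p \sim \p_{\frak{H}}$; this is immediate from $\B(\p) = \B(\p_{\frak{H}})$ and the fact that $\UNB_{\leq}(\p) = \down(\gall) \setminus \B_{\leq}(\p)$. One should also confirm that $\frak{H}$ being a $\leq$-prime collection (required by the definition of universal obstruction) is exactly the hypothesis needed to invoke \autoref{primcollunbbasis}, which it is. So the proof is a two-line chain of equalities: $\cobs(\p) = \text{basis of } \UNB(\p) = \text{basis of } \UNB(\p_{\frak{H}}) = \bigcup_{\mathscr{H} \in \frak{H}} \{ \closure{}{\mathscr{H}} \}$.
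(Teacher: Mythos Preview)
Your proposal is correct and follows essentially the same approach as the paper's proof: invoke $\p \sim \p_{\frak{H}}$ by definition, use \autoref{parsimbounded} to get $\UNB(\p) = \UNB(\p_{\frak{H}})$, and then apply \autoref{primcollunbbasis}. The paper's proof is literally these three lines; you have simply been more explicit about the intermediate steps (passing through $\B(\p)=\B(\p_{\frak{H}})$ and the uniqueness of the basis), which is fine.
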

\begin{proof}
By definition of $\leqslant$-universal obstructions, $\p \sim \p_{\mathfrak{H}}.$
By \autoref{parequivsamebounded}, $\UNB_{\leqslant}(\p) = \UNB_{\leqslant}(\p_{\mathfrak{H}}).$
Then, by \autoref{primcollunbbasis}, our claim holds.
\end{proof}

The next result shows that the class of $\leqslant$-rational parameters corresponds precisely to the $\leqslant$-monotone parameters that have finite $\leqslant$-universal obstructions.

\begin{theorem}\label{finiteuobs_rational}
A graph parameter $\p$ is $\leqslant$-rational if and only if it has a finite $\leqslant$-universal obstruction.
\end{theorem}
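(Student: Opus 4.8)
\textbf{Proof plan for \autoref{finiteuobs_rational}.}

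The plan is to prove both directions by moving through the two-step correspondence already assembled in the paper: $\leq$-rational parameters correspond (up to $\sim$) to rational sequences via $\p_{\mathscr H}$, and rational sequences correspond (up to $\sim$) to finite $\lesssim$-antichains of \emph{monotone} sequences via \autoref{rationalperiodic}; finally, finite $\lesssim$-antichains of monotone sequences are, after passing to $\lesssim$-minimal representatives and using the remark that monotone primes may be assumed (the comment following \autoref{rationalperiodic}), exactly finite prime collections, whose associated parameter is $\p_{\frak H}$.

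For the forward direction, suppose $\p$ is $\leq$-rational. By definition there is a $\leq$-rational sequence $\mathscr H$ with $\p\sim\p_{\mathscr H}$. Apply \autoref{rationalperiodic} to obtain a finite $\lesssim$-antichain of monotone sequences $\frak H=\{\mathscr H^{(1)},\dots,\mathscr H^{(r)}\}$ with $\p_{\mathscr H}\sim\p_{\frak H}$, hence $\p\sim\p_{\frak H}$ by transitivity of $\sim$. Each $\mathscr H^{(i)}$ is monotone and, being a single $\leq$-chain, has $\leq$-width $1$, so it is prime; since $\frak H$ is a $\lesssim$-antichain of primes it is a finite prime collection. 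Thus $\frak H$ is a finite $\leq$-universal obstruction of $\p$ by the definition of universal obstruction.

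For the reverse direction, suppose $\p$ has a finite universal obstruction $\frak H=\{\mathscr H^{(1)},\dots,\mathscr H^{(r)}\}$, so $\p\sim\p_{\frak H}$ with each $\mathscr H^{(i)}$ prime (hence, by the post-\autoref{rationalperiodic} remark, monotone). It suffices to exhibit a single $\leq$-rational sequence $\mathscr F$ with $\p_{\mathscr F}\sim\p_{\frak H}$. Build $\mathscr F$ by interleaving: set $\mathscr F_{k}$ to range over all graphs $\mathscr H^{(i)}_{j}$ in a diagonal enumeration of $[r]\times\mathbb N$, i.e. list $\mathscr H^{(1)}_{0},\dots,\mathscr H^{(r)}_{0},\mathscr H^{(1)}_{1},\dots,\mathscr H^{(r)}_{1},\dots$ Then $(\mathscr F,\leq)$ is covered by $r$ many $\leq$-chains (the images of the $r$ sequences), so by \autoref{dilworth} its width is at most $r$, i.e.\ $\mathscr F$ is rational. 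It then remains to check $\p_{\mathscr F}\sim\p_{\frak H}$: using the characterization $\mathcal G_{\p_{\frak H},k}=\excl(\{\mathscr H^{(i)}_{k}\mid i\in[r]\})$ from \autoref{prime_col_new} and $\mathcal G_{\p_{\mathscr F},k}=\excl(\m(\mathscr F_{\geq k}))$, and the fact that $\m(\mathscr F_{\geq k})$ is a subset of $\{\mathscr H^{(i)}_{j}\mid i\in[r],\ j\geq \lfloor k/r\rfloor\}$ which by $\leq$-monotonicity $\leq^{*}$-dominates $\{\mathscr H^{(i)}_{\lceil k/r\rceil}\mid i\in[r]\}$ and is $\leq^{*}$-dominated by $\{\mathscr H^{(i)}_{0}\mid i\in[r]\}$ shifted appropriately, one gets, via \autoref{subobs} and \autoref{parcompobs}, inclusions $\mathcal G_{\p_{\frak H},k'}\subseteq\mathcal G_{\p_{\mathscr F},k}\subseteq\mathcal G_{\p_{\frak H},k''}$ for suitable linear-in-$k$ choices of $k',k''$, giving $\p_{\mathscr F}\sim\p_{\frak H}\sim\p$. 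Hence $\p$ is $\leq$-rational.

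The main obstacle is the bookkeeping in the reverse direction: one must verify that after the diagonal interleaving the minimal antichain $\m(\mathscr F_{\geq k})$ is still ``sandwiched'' between two shifts of $\{\mathscr H^{(i)}_{\cdot}\mid i\in[r]\}$, so that the index translation between $\p_{\mathscr F}$ and $\p_{\frak H}$ is a genuine function $\mathbb N\to\mathbb N$ on both sides. This is where $\leq$-monotonicity of each $\mathscr H^{(i)}$ is essential (it guarantees $\m(\mathscr H^{(i)}_{\geq j})=\{\mathscr H^{(i)}_{j}\}$ and the required $\leq^{*}$ comparisons), and where one should be careful that distinct sequences in $\frak H$ do not accidentally compare under $\leq$ — but they cannot, since $\frak H$ is a $\lesssim$-antichain, which is exactly what keeps the width of $\mathscr F$ from collapsing below the count needed for the equivalence. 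Everything else is a direct application of \autoref{rationalperiodic}, \autoref{dilworth}, and the translation Lemmas \autoref{parcompobs}–\autoref{prime_col_new}.
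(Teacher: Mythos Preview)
Your proof is correct and follows essentially the same route as the paper: the forward direction is identical (apply \autoref{rationalperiodic}), and the reverse direction uses the same interleaving construction $\mathscr{H}^{(1)}_{0},\dots,\mathscr{H}^{(r)}_{0},\mathscr{H}^{(1)}_{1},\dots$; the paper simply says ``a similar proof as in \autoref{rationalperiodic}'' where you spell out the sandwich argument via \autoref{subobs} and \autoref{parcompobs}.

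One remark: your final paragraph contains a false claim that is, fortunately, unnecessary. Being a $\lesssim$-antichain does \emph{not} prevent individual elements $\mathscr{H}^{(i)}_{j}$ and $\mathscr{H}^{(i')}_{j'}$ with $i\neq i'$ from being $\leq$-comparable; it only prevents one entire sequence from embedding cofinally into another. So the width of $\mathscr{F}$ may well be strictly less than $r$. This does not matter: rationality only requires \emph{finite} width, and your sandwich argument for $\p_{\mathscr{F}}\sim\p_{\frak H}$ goes through regardless of cross-comparisons, since every element of $\m(\mathscr{F}_{\geq k})$ is still some $\mathscr{H}^{(i)}_{j}$ with $j\geq\lfloor k/r\rfloor$, and every $\mathscr{H}^{(i)}_{\lceil k/r\rceil}$ is still $\geq$ some element of $\m(\mathscr{F}_{\geq k})$. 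Just delete that sentence.
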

\begin{proof}
Let $\mathscr{H}$ be a $\leqslant$-rational sequence such that $\p \sim \p_{\mathscr{H}}.$ 
By \autoref{rationalperiodic}, there exists a finite $\leqslant$-parametric family $\mathfrak{H}$ such that $\p_{\mathscr{H}} \sim \p_{\mathfrak{H}}.$ 
Then by definition $\mathfrak{H}$ is a $\leqslant$-universal obstruction for $\p.$ 

For the reverse direction, let $\mathfrak{H} = \{ \mathscr{H}^{(i)} \mid i \in [r]\}$ be a $\leqslant$-universal obstruction for $\p$.
We define a graph sequence $\mathscr{H} = \langle \mathscr{H}_{k} \rangle_{k \in \mathbb{N}}$ such that for every $k \in \mathbb{N}$, $\mathscr{H}_{k} = \mathscr{H}^{(i)}_{j}$, where $k = i \cdot r + j$, for some choice of $i \in \mathbb{N}$, $j \in [r]$.
Clearly $\mathscr{H}$ is a $\leqslant$-rational sequence.
Define the $\leqslant$-parametric family $\mathfrak{H}'$ to be the set obtained starting from $\mathfrak{H}$ after removing from each $\leqslant$-parametric graph in $\mathfrak{H}$ all duplicates of elements that appear only a finite number of times in the $\leqslant$-parametric graph, as well as all elements that appear in any other $\leqslant$-parametric graph of $\mathfrak{H}$.
Since $\mathfrak{H}$ is a finite $\lesssim$-antichain, it is not hard to see that $\mathfrak{H} \equiv^{*} \mathfrak{H}'$.
Then, by \autoref{univobsprops}, $\p_{\mathfrak{H}} \sim \p_{\mathfrak{H}'}$.
Observe that $\mathfrak{H'}$ satisfies all necessary properties in order to apply \autoref{claim_rational_frak} of \autoref{rationalperiodic} for $\mathscr{H}$ and $\mathfrak{G}$ being $\mathfrak{H}'$, which shows that $\p_{\mathscr{H}} \sim \p_{\mathfrak{H}}$.
We conclude that $\p$ is $\leqslant$-rational.
\end{proof}

\paragraph{Two examples of minor-universal obstructions.}

Bienstock, Robertson, Seymour, and Thomas proved in \cite{Bienstock89grap} that  for every forest $F$, every graph with no minor isomorphic to $F$ has pathwidth at most $|V(F)|-2$.
Thereofore, every graph excluding $\mathscr{T}_k$ as a minor has pathwidth $O(k)$.
Moroever, it is easy to verify that $\pw(\mathscr{T}_k)=\Omega(\log(k))$ (see  e.g., \cite{EllisST94,BarriereFST03sear}).
This implies that the singleton $\mathfrak{T}=\{\mathscr{T}\}$ is a minor-universal obstruction for $\pw$ (recall \autoref{def:univ_obs}).
Additionally, \autoref{uobs_to_cobs} implies that the minor-class obstruction of $\pw$ is the set that contains the class of forests, i.e. $\cobs_{\leqslant_{\mathsf{m}}}(\pw) = \{ \gforest \}$.



Similar implications can be argued for biconnected pathwidth.
It was proven in \cite{dang2018minors, HuynhJMW20Seymour} that there exists a function $f \colon \Nbbb\to\Nbbb$ such that every graph with biconnected pathwidth at least $f(k)$ contains as a minor either an apex forest or an outerplanar graph on $k$ vertices.
Moreover, all graphs in $\mathscr{T}^{a}$ and in $\mathscr{T}^{a*}$ are biconnected, therefore their pathwidh and their biconnected pathwidth are identical.
Also, it is easy to observe that both $\mathscr{T}^{a}_{k}$  and $\mathscr{T}^{a*}$ contains the tree $\mathscr{T}_{\Omega(k)}$ as a minor. 
This implies that $\bipw(\mathscr{T}^{a})=\Omega(\log(k))$ and  $\bipw(\mathscr{T}^{a*})=\Omega(\log(k))$.
We conclude that $\mathfrak{B}=\{\mathscr{T}^{a},\mathscr{T}^{a*}\}$ is a minor-universal obstruction for $\bipw$ (recall \autoref{def:univ_obs}).
Finally, \autoref{uobs_to_cobs} implies that the minor-class obstruction of $\bipw$ is the set that contains the class of  apex forests and the class of outerplanar graphs, i.e. $\cobs_{\leqslant_{\mathsf{m}}}(\bipw) = \{ \gfapex,\gouterplanar \}$.



\subsection{Parametric obstructions}

In the previous subsections we introduced the notions of $\leqslant$-class obstructions and $\leqslant$-universal obstructions as a means of approximately characterizing graph parameters by an equivalent representative parameter in a ``canonical'' form.
In this manner a $\leqslant$-monotone graph parameter that admits a finite $\leqslant$-universal obstruction $\mathfrak{H}$, also admits a finite representation in the sense that the both its $\leqslant$-class obstruction and the set $\mathfrak{H}$ are finite and very closely related.
However, we should stress that these two notions do not yet offer a truly finite characterization as each $\leqslant$-parametric graph in $\mathfrak{H}$ is an infinite object and the same holds for each $\leqslant$-closed class in its $\leqslant$-class obstruction.

In this subsection, to circumvent the issue above, we introduce a variant of the $\leqslant$-class obstruction which we shall call the $\leqslant$-parametric obstruction, that under certain order theoretic assumptions will serve as a fully finite obstruction characterization for graph parameters.

\medskip
We proceed with the definition of the $\leqslant$-parametric obstruction of a $\leqslant$-monotone parameter.

\begin{definition}[Parametric obstruction]\label{def:pobs}
The $\leqslant$-\emph{parametric obstruction}
of a $\leqslant$-monotone parameter $\p$ is the set of $\leqslant$-obstruction sets $\{ \obs_{\leqslant}(\mathcal{G}) \mid \mathcal{G} \in \cobs_{\leqslant}(\p) \}$, which we denote by $\pobs_{\leqslant}(\p),$ when $\cobs_{\leqslant}(\p)$ exists.

Note that, if $(\gall, \leqslant)$ is a well-quasi-ordering, then the set $\pobs_{\leqslant}(\mathsf{p})$ exists, for every $\leqslant$-monotone parameter $\mathsf{p},$ and moreover every $\leqslant$-obstruction set in $\pobs_{\leqslant}(\mathsf{p})$ is finite.
\end{definition}

Having introduced all of our obstruction notions for graph parameters, we are now in the position to state a theorem that shows how graph parameters may be equivalently compared through all the different notions we have seen so far.

\begin{theorem}\label{relationequiv} Let $\p$ and $\p'$ be two $\leqslant$-monotone parameters. If $\p$ and $\p'$ both admit $\leqslant$-universal obstructions and their $\leqslant$-class obstructions exist, then the following statements are equivalent:
\begin{enumerate}
\item $\p \succeq \p'$;
\item $\cobs_{\leqslant}(\p) \subseteq^{*} \cobs_{\leqslant}(\p')$;
\item $\pobs_{\leqslant}(\p) \leqslant^{**} \pobs_{\leqslant}(\p')$;\footnote{$\leqslant^{**}$ denotes the Smyth extension of $\leqslant^{*}$.}
\item for every $\leqslant$-universal obstruction $\mathfrak{H}$ of $\p$ and every $\leqslant$-universal obstruction $\mathfrak{F}$ of $\p'$, there exists a function $f \colon \mathbb{N} \to \mathbb{N}$ such that $\mathfrak{H} \lesssim^{*}_{f} \mathfrak{F}$;
\item for every $\leqslant$-closed class $\mathcal{G},$ if $\p$ is bounded in $\mathcal{G}$ then $\p'$ is bounded in $\mathcal{G}.$
\end{enumerate}
\end{theorem}
\begin{proof}
By \autoref{parsimbounded}, $\p \succeq \p'$ if and only if  $\B_{\leqslant}(\p) \subseteq \B_{\leqslant}(\p').$
By \autoref{def:cobs} and \autoref{exclminorssmith}, $\B_{\leqslant}(\p) \subseteq \B_{\leqslant}(\p')$ if and only if $\cobs(\p) \subseteq^{*} \cobs(\p').$
Then, by \autoref{def:pobs} and \autoref{subobs}, $\cobs(\p) \subseteq^{*} \cobs(\p')$ if and only if $\pobs(\p) \leqslant^{**} \pobs(\p').$
Also, by \autoref{def:univ_obs} and \autoref{univobsprops_finite}, $\p \succeq \p'$ if and only if there exists a function $f \colon \mathbb{N} \to \mathbb{N}$ such that $\mathfrak{H} \lesssim^{*}_{f} \mathfrak{F},$ for any $\leqslant$-universal obstruction $\mathfrak{H}$ of $\p$ (resp. $\mathfrak{F}$ of $\p'$).
\end{proof}

We remark that $(\gall, \leqslant)$ is assumed to be a partial-ordering.
Moreover, $(\down{\leqslant}{\gall}, \subseteq)$ is also a partial-ordering.
These two facts, combined with the fact that the $\leqslant$-class obstructions and the $\leqslant$-parametric obstructions are inclusion-antichains and $\leqslant^{*}$-antichains respectively, and \autoref{relationequiv}, imply the following corollary.
 
\begin{corollary}\label{relationequal} Let $\p$ and $\p'$ be two $\leqslant$-monotone parameters. If $\p$ and $\p'$ both admit $\leqslant$-universal obstructions and their $\leqslant$-class obstructions exist, then the following statements are equivalent:
\begin{enumerate}
\item $\p \sim \p'$;
\item $\cobs_{\leqslant}(\p) = \cobs_{\leqslant}(\p')$;
\item $\pobs_{\leqslant}(\p) = \pobs_{\leqslant}(\p')$;
\item for every $\leqslant$-universal obstruction $\mathfrak{H}$ of $\p$ and every $\leqslant$-universal obstruction $\mathfrak{F}$ of $\p'$, there exists a function $f \colon \mathbb{N} \to \mathbb{N}$ such that $\mathfrak{H} \equiv^{*}_{f} \mathfrak{F}$;
\item for every $\leqslant$-closed class $\mathcal{G},$ $\p$ is bounded in $\mathcal{G}$ if and only if $\p'$ is bounded in $\mathcal{G}.$
\end{enumerate}
\end{corollary}

We stress that $\leqslant$-class obstructions and $\leqslant$-parametric obstructions, when they exist, are unique.
Certainly, this is not the case with universal obstructions as the equivalence classes of $\equiv^{*}_{f}$ may contain (infinitely) many $\leqslant$-parametric families.

\paragraph{Two examples of minor-parametric obstructions.}

Finally, having introduced $\leqslant$-parametric obstructions, by the discussion at the end of \autoref{univ_on4td}, we can conclude that the minor-parametric obstruction of pathwidth is the set containing the set containing the triangle $K_{3}$, i.e. $\pobs_{\leqslant_{\mathsf{m}}}(\pw) = \big\{  \{ K_{3}\}  \big\}$.
Therefore, the singleton containing the singleton containing the triangle can be seen as a finite characterization of the parameter of pathwidth.

For the case of biconnected pathwidth, it is well known that $\obs_{\leqslant_{\mathsf{m}}}(\gouterplanar) = \{K_{4},K_{2,3}\}$.
Moreover, according to \cite{DinneenCF0fForbidden}, $\obs_{\leqslant_{\mathsf{m}}}(\gfapex) = \{K_{4},S_{3},2\cdot K_{3}\}$, depicted in \autoref{two_obs_bpw}.
Therefore, $\pobs_{\leqslant_{\mathsf{m}}}(\bipw) = \big\{\{K_{4},S_{3},2\cdot K_{3}\}, \{K_{4},K_{2,3}\}  \big\}$.

\begin{figure}[htbp]
\centering
\includegraphics[width=0.55\linewidth]{biconnected_pathwidth_pobs}
\caption{\label{biconnected_pathwidth_pobs}The graphs in the two minor-obstruction sets in $\pobs_{\leqslant_{\mathsf{m}}}(\bipw).$}
\label{two_obs_bpw}
\end{figure}

\subsection{Omnivores}\label{subsec_omnivores}

In this subsection we introduce the notion of a $\leqslant$-omnivore of a $\leqslant$-closed class in an attempt to understand which $\leqslant$-closed classes admit a regular description in terms of the $\leqslant$-closure of some $\leqslant$-parametric graph.
We provide an exact order theoretic characterization for when $\leqslant$-closed classes admit $\leqslant$-omnivores.
This results will have a tremendous impact in our quest for characterizing class properties in terms of graph parameters in a ``canonical'' way.

\medskip
We proceed with the definition of a $\leqslant$-omnivore of a $\leqslant$-closed class.

\begin{definition}[Omnivore]\label{def:omnivore}
A $\leqslant$-parametric graph $\mathscr{H}$ is a $\leqslant$-\emph{omnivore} of a $\leqslant$-closed graph class $\mathcal{G}$ if $\closure{\leqslant}{\mathscr{H}} = \mathcal{G}.$
\end{definition}

It turns out that the $\leqslant$-closed classes that admit $\leqslant$-omnivores are precisely the $\leqslant$-ideals of $\gall.$

\begin{lemma}\label{omnivore}
A $\leqslant$-closed class $\mathcal{G}$ has a $\leqslant$-omnivore if and only if $\mathcal{G}$ is a $\leqslant$-ideal of $\gall.$
\end{lemma}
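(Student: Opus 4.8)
The statement to prove is \autoref{omnivore}: a $\leq$-closed class $\mathcal{G}$ has an omnivore if and only if $\mathcal{G}$ is an ideal of $\gall$. Recall that an ideal is a downward-$\leq$-closed, up-$\leq$-directed, non-empty subset of $\gall$, and an omnivore of $\mathcal{G}$ is a prime sequence $\mathscr{H}$ (hence $\leq$-monotone, by the remark following \autoref{rationalperiodic}) with $\mathcal{G} = \closure{}{\mathscr{H}}$. So this is essentially the standard fact that ideals of a countable quasi-order are exactly the downsets generated by a single increasing chain. The plan is to prove the two directions separately.

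For the forward direction, suppose $\mathscr{H}$ is an omnivore of $\mathcal{G}$, so $\mathcal{G} = \closure{}{\mathscr{H}}$. Since $\closure{}{\mathscr{H}}$ is a downward-closure it is automatically downward-$\leq$-closed, and it is non-empty since $\mathscr{H}_0 \in \mathcal{G}$. For up-directedness: take $G, G' \in \mathcal{G}$; by definition of $\closure{}{\mathscr{H}}$ there are $i, j$ with $G \leq \mathscr{H}_i$ and $G' \leq \mathscr{H}_j$; letting $m = \max\{i,j\}$ and using that $\mathscr{H}$ is monotone (so $\mathscr{H}_i \leq \mathscr{H}_m$ and $\mathscr{H}_j \leq \mathscr{H}_m$), we get $G \leq \mathscr{H}_m$ and $G' \leq \mathscr{H}_m$ with $\mathscr{H}_m \in \mathcal{G}$; transitivity finishes it. Hence $\mathcal{G}$ is an ideal.

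For the reverse direction, suppose $\mathcal{G}$ is an ideal of $\gall$. Since $\gall$ is countable, $\mathcal{G}$ is countable, so enumerate $\mathcal{G} = \{G_1, G_2, \dots\}$ (if $\mathcal{G}$ is finite the same construction works, eventually repeating a top element, or one pads the sequence). Build $\mathscr{H}$ recursively: set $\mathscr{H}_0$ to be any element of $\mathcal{G}$ (non-empty), and given $\mathscr{H}_{k-1}$, use up-directedness to pick $\mathscr{H}_k \in \mathcal{G}$ with $\mathscr{H}_{k-1} \leq \mathscr{H}_k$ and $G_k \leq \mathscr{H}_k$. This $\mathscr{H}$ is $\leq$-monotone by construction, hence (being a chain) $\leq$-prime, i.e. $\w_\leq(\mathscr{H}) = 1$. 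It remains to check $\mathcal{G} = \closure{}{\mathscr{H}}$: the inclusion $\closure{}{\mathscr{H}} \subseteq \mathcal{G}$ holds because every $\mathscr{H}_k \in \mathcal{G}$ and $\mathcal{G}$ is downward-$\leq$-closed; the inclusion $\mathcal{G} \subseteq \closure{}{\mathscr{H}}$ holds because any $G \in \mathcal{G}$ equals some $G_k$ in the enumeration, and $G_k \leq \mathscr{H}_k$, so $G \in \closure{}{\mathscr{H}}$. Thus $\mathscr{H}$ is an omnivore of $\mathcal{G}$.

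The only subtle point — the ``main obstacle'', though it is mild — is the bookkeeping in the reverse direction: one must simultaneously arrange that the sequence is increasing (to guarantee primality/monotonicity) and that it is cofinal in $\mathcal{G}$ (to guarantee $\mathcal{G} \subseteq \closure{}{\mathscr{H}}$), which is exactly what the interleaving of the recursion step with the enumeration of $\mathcal{G}$ achieves, using up-directedness at each stage; one should also note explicitly that countability of $\mathcal{G}$ comes from $\gall$ being countable and $\mathcal{G} \subseteq \gall$, and handle (or remark away) the degenerate case where $\mathcal{G}$ is finite.
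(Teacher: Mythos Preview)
Your proof is correct and follows essentially the same approach as the paper. The forward direction is identical; in the reverse direction the paper builds the omnivore via the size-stratification $\mathcal{G}_r=\{G\in\mathcal{G}\mid |V(G)|\le r\}$ (choosing $\mathscr{H}_k\in\mathcal{G}$ above $\mathcal{G}_k\cup\{\mathscr{H}_{k-1}\}$, so that $G\le\mathscr{H}_{|V(G)|}$) rather than an arbitrary enumeration of $\mathcal{G}$, but this is a cosmetic difference for the present lemma --- the paper's particular construction only matters later, in \autoref{effective_existence}, where the explicit size-based description is reused to argue computability of the omnivore.
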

\begin{proof}
Let $\mathscr{H}$ be a $\leqslant$-omnivore of $\mathcal{G}.$ 
We show that $\mathcal{G}$ is $\leqslant$-directed. 
For every $G \in \mathcal{G}$ we have that there exists $k \in \mathbb{N}$ such that $G \leqslant \mathscr{H}_{k}.$ 
For $G, G' \in \mathcal{G},$ let $k, k' \in \mathbb{N}$ be such that $G \leqslant \mathscr{H}_{k}$ and $G' \leqslant \mathscr{H}_{k'}.$ 
Let $m := \max\{k, k'\}.$ 
By definition of $\mathscr{H},$ $G \leqslant \mathscr{H}_{m}$ and $G' \leqslant \mathscr{H}_{m}.$ 
As $\mathscr{H}_{m} \in \mathcal{G},$ it follows that $\mathcal{G}$ is $\leqslant$-directed, therefore it is a $\leqslant$-ideal of $\gall.$

For the reverse direction, we assume that $\mathcal{G}$ is $\leqslant$-directed and we show how to define a $\leqslant$-omnivore $\mathscr{H}$ of $\mathcal{G}.$ 
Let $\mathcal{G}_{r} \coloneqq \{ G \in \mathcal{G} \mid |V(G)| ≤ r \}$, for every $r \in \mathbb{N}.$ 
We define $\mathscr{H}_{1}$ as a graph in $\mathcal{G}$ such that for every graph $G \in \mathcal{G}_{1}$, $G \leqslant \mathscr{H}_{1}$.
For $k > 1,$ we define $\mathscr{H}_{k}$ as a graph in $\mathcal{G}$ such that for every graph $G \in \mathcal{G}_{k}$, $G \leqslant \mathscr{H}_{k}$ and $\mathscr{H}_{k-1} \leqslant \mathscr{H}_{k}$.
$\mathscr{H}_{k}$ is well-defined, since $\mathcal{G}_{k} \cup \{ \mathscr{H}_{k-1}\}$ is finite and $\mathcal{G}$ is $\leqslant$-directed. 
By definition $\mathscr{H}_{k-1} \leqslant \mathscr{H}_{k}.$ 
Also, for any graph $G \in \mathcal{G}$ we have that $G \leqslant \mathscr{H}_{|V(G)|}.$ 
Therefore $\mathscr{H} = \langle \mathscr{H}_{k} \rangle_{k \in \mathbb{N}}$ is a $\leqslant$-omnivore of $\mathcal{G}.$
\end{proof}

\paragraph{Two examples of minor-omnivores.}

Recall that the graph class of forests is exactly the set of minors of complete ternary trees (\autoref{prop_tree_omnivore}).
In the terminology introduced in this subsection this translates to the fact that the parametric graph $\mathscr{T}$ is a minor-omnivore of the class of forests $\gforest$. 
Similarly, \autoref{prop_outer_omnivore} implies that the parametric graph $\mathscr{T}^{a}$ (resp. $\mathscr{T}^{a*}$) is a minor-omnivore of the class of apex forests $\gfapex$ (resp. the class of outerplanar graphs $\gouterplanar$).

\medskip
Beforehand, with \autoref{bp_is_directed}, we saw that, every $\leqslant$-monotone parameter corresponds to a directed $\leqslant$-class property.
With $\leqslant$-omnivores we now have a powerful tool at hand that will allow us, under suitable order theoretic assumptions, to prove the reverse \autoref{bp_is_directed}.

Starting from the $\leqslant$-class obstruction set of a directed $\leqslant$-class property we shall construct a $\leqslant$-parametric family, whose corresponding $\leqslant$-monotone parameter, precisely captures this property.

\paragraph{Parametric families and antichains of ideals.}

In fact we can prove that finite $\leqslant$-parametric families and finite inclusion-antichains of $\leqslant$-ideals are closely related.

Given a $\leqslant$-parametric family $\mathfrak{H}$, we use $[\mathfrak{H}]_{f}$ to denote the equivalence class of $\equiv^{*}_{f}$ that contains $\mathfrak{H}.$
Let $\mathbf{F}_{\leqslant}^{\mathsf{fin}}$ denote the set of all equivalence classes of $\equiv^{*}_{f},$ restricted only to finite $\leqslant$-parametric families, namely,
$$\mathbf{F}_{\leqslant}^{\mathsf{fin}} \coloneqq \{ [\mathfrak{H}]_{f} \mid \text{$\mathfrak{H}$ is a finite $\leqslant$-parametric family}\},$$
and
$\mathbf{I}_{\leqslant}^{\mathsf{fin}}$ denote the set of all finite inclusion-antichains of $\leqslant$-ideals of $\gall,$ namely,
$$\mathbf{I}_{\leqslant}^{\mathsf{fin}} \coloneqq \{ \mathbb{I} \in 2^{\idl{\leqslant}{\gall}} \mid \text{$
\mathbb{I}$ is a finite inclusion-antichain} \}.$$

The next theorem shows that the two sets $\mathbf{F}_{\leqslant}^{\mathsf{fin}}$ and $\mathbf{I}_{\leqslant}^{\mathsf{fin}}$ are in bijection in the following sense:

\begin{theorem}\label{bijection_coll_ideals}
There exists a unique bijection $\rho_{\ref{bijection_coll_ideals}}  \colon  \mathbf{F}_{\leqslant}^{\mathsf{fin}} \to \mathbf{I}_{\leqslant}^{\mathsf{fin}}$ such that, for every $[\mathfrak{G}]_{f} \in \mathbf{F}_{\leqslant}^{\mathsf{fin}}$, for every $\mathfrak{H} \in [\mathfrak{G}]_{f},$ $\cobs(\p_{\mathfrak{H}}) = \rho_{\ref{bijection_coll_ideals}}([\mathfrak{G}]_{f})$.
\end{theorem}
\begin{proof}
We define the desired bijection $\rho  \colon  \mathbf{F}_{\leqslant}^{\mathsf{fin}} \to \mathbf{I}_{\leqslant}^{\mathsf{fin}}$ as follows.
Consider a finite $\leqslant$-parametric family $\mathfrak{H}$.
\autoref{primcollunbbasis} tells us that $\cobs_{\leqslant}(\p_{\mathfrak{H}}) = \bigcup_{\mathscr{H} \in \mathfrak{H}} \{\{ \closure{\leqslant}{\mathscr{H}} \}\}$.
Observe that by definition of $\leqslant$-omnivores each $\mathscr{H} \in \mathfrak{H}$ is a $\leqslant$-omnivore of $\closure{\leqslant}{\mathscr{H}}$ and as such, \autoref{omnivore} implies that $\cobs_{\leqslant}(\p_{\mathfrak{H}})$ is a finite inclusion-antichain of $\leqslant$-ideals of $\gall$, i.e. $\cobs_{\leqslant}(\p_{\mathfrak{H}}) \in \mathbf{I}_{\leqslant}^{\mathsf{fin}}$.
We define $\rho([\mathfrak{H}]_{f}) \coloneqq \cobs_{\leqslant}(\p_{\mathfrak{H}})$.
The fact that $\rho$ is injective follows from \autoref{relationequal}, since for any two non-equivalent (with respect to $\equiv^{*}_{f}$) $\leqslant$-parametric families their corresponding parameter must have non-equal $\leqslant$-class obstructions.

To show that $\rho$ is also surjective consider $\mathbb{I} \in \mathbf{I}_{\leqslant}^{\mathsf{fin}}$.
\autoref{omnivore} implies the existence of a set $\mathfrak{F}$ of $\leqslant$-parametric graphs, each one being a $\leqslant$-omnivore of a different $\leqslant$-ideal of $\gall$ in $\mathbb{I}$.
Moreover, since $\mathbb{I}$ is a $\subseteq$-antichain, \autoref{obsprime} implies that $\mathfrak{F}$ is a $\lesssim$-antichain and thus a $\leqslant$-parametric family.
Observe that \autoref{primcollunbbasis} implies $\cobs_{\leqslant}(\p_{\mathfrak{F}}) = \mathbb{I}$.
Then, by definition of $\rho$, $\rho([\mathfrak{F}]_{f}) = \mathbb{I}$.

To conclude, uniqueness is implied by the fact that the $\leqslant$-class obstruction of any $\leqslant$-monotone parameter is unique.
\end{proof}

The previous theorem combined with \autoref{finiteuobs_rational} implies the same relationship between equivalence classes of $\leqslant$-rational parameters and finite inclusion-antichains of $\leqslant$-ideals of $\gall.$

For a graph parameter $\p$ we use $[\p]_{\sim}$ in order to denote the equivalence class of $\p$ with respect to $\sim,$ namely $[\p]_{\sim} = \{ \p' \mid \p \sim \p' \}.$ 
We write $\mathbf{P}_{\leqslant}^{\mathsf{rational}}$ for the set of equivalence classes of $\leqslant$-rational parameters, namely 
$$\mathbf{P}_{\leqslant}^{\mathsf{rational}} := \{ [\p]_{\sim} \mid \textrm{$\p$ is a $\leqslant$-rational parameter}\}.$$

The following variant of \autoref{bijection_coll_ideals} follows.

\begin{theorem}\label{bijection_rational_ideals} There exists a unique bijection $\rho_{\ref{bijection_rational_ideals}} \colon \mathbf{P}_{\leqslant}^{\mathsf{rational}} \to \mathbf{I}_{\leqslant}^{\mathsf{fin}}$ such that, for every $[\p]_{\sim} \in \mathbf{P}_{\leqslant}^{\mathsf{rational}}$, for every $\p \in [\p]_{\sim}$, $\cobs_{\leqslant}(\p) = \rho_{\ref{bijection_rational_ideals}}([\p]_{\sim})$. 
\end{theorem}
\begin{proof} We define the desired bijection $\rho \colon \mathbf{P}_{\leqslant}^{\mathsf{rational}} \to \mathbf{I}_{\leqslant}^{\mathsf{fin}}$ as follows.
Consider an equivalence class $[\p]_{\sim} \in \mathbf{P}_{\leqslant}^{\mathsf{rational}}$ and any $\leqslant$-rational parameter $\p \in [\p]_{\sim}$.
By \autoref{finiteuobs_rational} there exists a finite $\leqslant$-parametric family $\mathfrak{H}$ that is a $\leqslant$-universal obstruction for $\p$ and therefore for every parameter in $[\p]_{\sim}$.
We define $\rho([\p]_{\sim}) \coloneqq \rho_{\ref{bijection_coll_ideals}}([\mathfrak{H}]_{f})$.
The fact that $\rho$ is an injection follows from \autoref{relationequal}, i.e., from the fact that for two non-equivalent parameters we have non-equivalent $\leqslant$-universal obstructions.
To conclude that $\rho$ is a surjection as desired, recall that every finite $\leqslant$-parametric family $\p_{\mathfrak{H}}$ is trivially $\leqslant$-rational and that $\rho_{\ref{bijection_coll_ideals}}$ is a surjection.
\end{proof}

As a consequence of \autoref{bijection_coll_ideals} and \autoref{bijection_rational_ideals} we can prove that the implication of \autoref{params_obs_ideals} holds without the well-quasi-ordering assumption, when restricting to the class of $\leqslant$-rational parameters.

\begin{corollary}\label{corollary_uobs_ideal}
Let $\mathfrak{H}$ be a finite $\leqslant$-parametric family.
Then $\cobs_{\leqslant}(\p_{\mathfrak{H}})$ is a finite inclusion-antichain of $\leqslant$-ideals of $\gall.$
\end{corollary}

\begin{corollary}\label{cor:par_uobs_cobs_ideals}
Let $\p$ be a $\leqslant$-monotone parameter.
If $\p$ has a finite $\leqslant$-universal obstruction then $\cobs_{\leqslant}(\p)$ is a finite inclusion-antichain of $\leqslant$-ideals of $\gall.$
\end{corollary}

\paragraph{Directed properties with finite class obstructions are representable.}

As a consequence of \autoref{bijection_coll_ideals} and \autoref{bijection_rational_ideals} we may also obtain a partial result to the question we asked at the end of \autoref{from_params_to_props}.

Towards this, we define when a $\leqslant$-class property is \textsl{representable} by a set of $\leqslant$-parametric families.

\begin{definition}[Representable class property]
A $\leqslant$-class property $\mathbb{CP}$ is \emph{representable} by a set of $\leqslant$-parametric families $\mathbf{H}$ if $\mathbb{CP} = \bigcup_{\mathfrak{H} \in \mathbf{H}} \mathbb{B}_{\leqslant}(\p_{\mathfrak{H}}).$

If moreover $\mathbf{H}$ is a singleton $\{ \mathfrak{H} \}$, we say that $\mathbb{CP}$ is representable by the $\leqslant$-parametric family $\mathfrak{H}.$
\end{definition}

Under the well-quasi-ordering assumption, \autoref{bijection_coll_ideals} and \autoref{wqo_implies_directed_ideal} imply that $\leqslant$-class properties with a finite $\leqslant$-class obstruction set are representable by a single fingle $\leqslant$-parametric family if and only if they are directed.

\begin{theorem}\label{finite_directed_properties_representable}
Let $(\gall, \leqslant)$ be a well-quasi-ordering and $\mathbb{CP}$ be a $\leqslant$-class property with a finite $\leqslant$-class obstruction set.
Then, $\mathbb{CP}$ is representable by a finite $\leqslant$-parametric family if and only $\mathbb{CP}$ is directed.
\end{theorem}
\begin{proof}
We may observe that, by \autoref{wqo_implies_directed_ideal} and our assumption, we have that $\mathbb{CP}$ is directed if and only if $\cobs_{\leqslant}(\mathbb{CP})$ is a finite inclusion-antichain of $\leqslant$-ideals of $\gall.$
Then, our result follows directly by \autoref{bijection_coll_ideals}.
\end{proof}

At this point, given a $\leqslant$-closed class $\mathcal{G},$ we may observe that the $\leqslant$-class property $\mathbb{P}(\Gcal)$ is representable by the finite $\leqslant$-parametric family $\mathfrak{H}(\Gcal)$ that consists of $|\obs_{\leqslant}(\Gcal)|$ many $\leqslant$-parametric graphs defined as follows.
\begin{align*}
\mathfrak{H}(\Gcal) \ \coloneqq \ \{ \langle \mathscr{H}_{t} \rangle_{t \in \Nbbb} \mid Z \in \obs_{\leqslant}(\Gcal)\text{ and }\mathscr{H}_{t} = Z\text{, for every }t\in\Nbbb\}.
\end{align*}

As a result, by calling upon \autoref{finite_directed_properties_representable}, we observe that $\mathbb{P}(\Gcal)$ is directed, for every $\leqslant$-closed class $\Gcal.$

\medskip
Later, in \autoref{chakmrkfinfoerobes} we shall see that, under an even stronger order theoretic assumption, we may generalize \autoref{finite_directed_properties_representable} to capture {\sl any} directed $\leqslant$-class property. 

\section{Algorithmic consequences}\label{algorithms_section}

In this section, we explore the algorithmic consequences of having finite $\leqslant$-class obstructions sets for deciding the membership problem in a given $\leqslant$-class property.
Moreover, we complement these algorithmic results by examining the algorithmic consequences of having finite $\leqslant$-universal obstructions for approximating graph parameters.

\subsection{Parameterized problems and  algorithms} 

For algorithmic purposes we previously discussed, we introduce a suitable notion of ``efficient'' $\leqslant$-containment testing.
Towards this, we need a short introduction to parameterized algorithms and complexity.

\medskip
Let $Σ$ be a finite alphabet.
A \emph{parameterized problem} is a pair $\mathcal{P} = (Π,κ)$ where $Π$ is a subset of $Σ^*$ (encoding the \yes-instances of the problem) and $κ \colon Σ^*\to\mathbb{N}$ is a function mapping instances of $Π$ to non-negative integers.
A \emph{parameterized algorithm} (in short \FPT-algorithm) for $\mathcal{P}=(Π,κ)$ is an algorithm that decides if an instance $x\in Σ^*$ belongs in $Π$ or not, in time $f(κ(x))\cdot |x|^{O(1)}$ for some function $f \colon \mathbb{N}\to\mathbb{N}.$
If $f$ is a computable function, then we have an \emph{effective \FPT-algorithm}.
We consider parameterized problems on graphs, i.e., $x$ encodes some collection of graphs.
We say that $\mathcal{P}=(Π,κ)$ is \emph{(effectively) Fixed Parameter Tractable}, in short is (\emph{effectively}) \FPT, if it admits an (effective) \FPT-algorithm.

Not all parameterized problems are \FPT.
The design of \FPT-algorithms for parameterized problems or the proof that no such algorithm exists (subject to certain computational complexity assumptions), is the object of the field Parameterized Computation (see~\cite{CyganFKLMPPS15para, FlumG06Parameterized} for related textbooks).

\medskip
Let $\p$ be a graph parameter. 
We define $\mathcal{P}_{\p}=(Π_\p,κ_\p)$ as the parameterized problem where $x\in Σ^*$ encodes a pair $(G,k)$ where $G$ is a graph, $k\in\mathbb{N},$  $κ_\p(G,k)=k,$ and finally, $Π_\p=\{(G,k)\mid \p(G)≤k\}.$
We say that $\p$ is (effectively) \emph{\FPT-decidable} if $\mathcal{P}_{\p}$ is (effectively) \FPT.

Let $g \colon \mathbb{N}\to\mathbb{N}.$
We say that $\p$ is (effectively) \emph{\FPT-approximable with gap $g$} if there is an algorithm that either outputs that $(G,g(k))\in Π_{\p}$ or outputs that $(G,k)\not\in Π_{\p},$ in time $f(\p(G))\cdot |G|^{O(1)},$ for some (computable) function $f \colon \mathbb{N}\to\mathbb{N}.$

\paragraph{\FPT-decidable relations.}
Let $\leqslant$ be a quasi-ordering relation on graphs.
Consider the parameterized problem $\Pi_{\leqslant}$ where each $x\in Σ^*$ encodes two graphs $H$ and $G,$ i.e, $x=(H,G),$ $κ(H,G)=|H|,$ and $(H,G)\in \Pi_{\leqslant}$ if $H \leqslant G.$
We say that $\leqslant$ is \FPT-decidable if $Π_{\leqslant}$ is \FPT.

\paragraph{Equivalence with gap function.}
We next refine the definition of equivalence of two graph parameters.
Let $\mathbb{N}^{\mathbb{N}}$ denote the set of all functions mapping non-negative integers to non-negative integers.
Let $\p$ and $\p'$ be two equivalent graph parameters with gap function $f$.
We say that $\p$ and $\p'$ are equivalent with $F$ \emph{gap} if $f \in F$, where $F \subseteq \mathbb{N}^{\mathbb{N}}.$
We denote this as $\p \sim_{F} \p'.$

\subsection{Testing membership in class properties}

In this subsection we consider the problem of deciding membership in a given $\leqslant$-class property.
\medskip

Certainly, for the purpose of designing algorithms that decide membership in class properties, we need to consider a finite description of the input $\leqslant$-closed class.
For this, given a finite set of graphs $\mathcal{Z},$ we define the class of \emph{$\mathcal{Z}$-$\leqslant$-free} graphs, denoted by $\excl_{\leqslant}(\mathcal{Z})$, as the $\leqslant$-closed class that consists of the graphs that exclude all graphs in $\mathcal{Z}$ with respect to $\leqslant$.

With this notion at hand, we may now formally define the membership problem in a $\leqslant$-class property $\mathbb{CP}$.
\begin{eqnarray}
\begin{minipage}{11.2cm}
\noindent \textsc{Membership in $\leqslant$-class property $\mathbb{CP}$}
\begin{description}
\vspace{-0.7em}
  \item[Input:] A finite set $\mathcal{Z}$ of graphs.
\vspace{-0.7em}
  \item[Question:] Does the class of $\mathcal{Z}$-$\leqslant$-free graphs belong to $\mathbb{CP}$?
\end{description}
\end{minipage}\label{class_property_membership}
\end{eqnarray}

In what follows, we show that we can always decide \textsc{Membership in $\leqslant$-class property $\mathbb{CP}$} in polynomial time, given a $\leqslant$-class property whose $\leqslant$-class obstruction set is finite and assuming that $\leqslant$ is \FPT-decidable.

To achieve this, we need an algorithmic way to determine when one $\leqslant$-closed class is a subclass of another.
This follows immediately from \autoref{subobs}, assuming that $\leqslant$ is \FPT-decidable.

\begin{observation}\label{testing_subclass} Assume that $\leqslant$ is \FPT-decidable.
Then for every two finite sets of graphs $\Hcal$ and $\Fcal$ we can decide whether $\excl_{\leqslant}(\Hcal) \subseteq \excl_{\leqslant}(\Fcal)$ in polynomial time.
\end{observation}
\begin{proof} By definition of $\leqslant^{*}$ and \autoref{subobs} it suffices to test whether there exists a graph in $\Fcal$ that excludes every graph in $\Hcal$ with respect to $\leqslant$, which by the assumption that $\leqslant$ is \FPT-decidable can be done in polynomial time.
\end{proof}

With this tool at hand we can show that \textsc{Membership in $\leqslant$-class property $\mathbb{CP}$} is decidable in polynomial time, assuming that $\cobs_{\leqslant}(\mathbb{CP})$ is finite and that $\leqslant$ is \FPT-decidable.

\begin{theorem}\label{membership_is_decidable} Assume that $\leqslant$ is \FPT-decidable.
Then, for every $\leqslant$-class property $\mathbb{CP}$ for which $\cobs_{\leqslant}(\mathbb{CP})$ is finite, \textsc{Membership in $\leqslant$-class property $\mathbb{CP}$} is decidable in polynomial time.
\end{theorem}
\begin{proof} Given a finite set $\Zcal$ of graphs, using \autoref{testing_subclass}, it suffices to check whether $\Hcal \not\subseteq \excl_{\leqslant}(\Zcal),$ for every $\Hcal \in \cobs_{\leqslant}(\mathbb{CP}).$
Since $\cobs_{\leqslant}(\mathbb{CP})$ is finite, we conclude with a finite number of calls to the $\leqslant$-containment algorithm implied by the assumption that $\leqslant$ is \FPT-decidable.
\end{proof}

Let us note that the theorem above, polynomial time means that there exists an algorithm with running time of the form $f(\mathsf{a}(\cobs_{\leqslant}(\mathbb{CP}))) \cdot |\mathsf{b}(\mathcal{Z})|^{O(1)},$ where $f \colon \Nbbb \to \Nbbb$ is the function implied by the \FPT algorithm that decides $\Pi_{\leqslant}$ and $\mathsf{a}(\cobs_{\leqslant}(\mathbb{CP})) \coloneqq |\cobs_{\leqslant}(\mathbb{CP})| \cdot \max \{ \mathsf{b}(\obs_{\leqslant}(\Hcal)) \mid \Hcal \in \cobs_{\leqslant}(\mathbb{CP}) \}$ and $\mathsf{b}(\Zcal) \coloneqq |\Zcal| \cdot \max\{ |Z| \mid Z \in \Zcal \}.$

To conclude, an immediate corollary of \autoref{membership_is_decidable} and \autoref{wqo_implies_omega2_equiv_class_obs_finite} is the following.

\begin{corollary} Let $(\gall, \leqslant)$ be an $\omega^{2}$-well-quasi-ordering and $\leqslant$ be \FPT-decidable.
Then, for every $\leqslant$-class property $\mathbb{CP}$, \textsc{Membership in $\leqslant$-class property $\mathbb{CP}$} is decidable in polynomial time.
\end{corollary}

We wish to stress however that the algorithm we present for \eqref{class_property_membership} is not constructive, in the sense that it requires the knowledge of the $\leqslant$-obstruction sets, for every $\leqslant$-closed class in the $\leqslant$-class obstruction set of $\mathbb{CP}$.

\subsection{Effective parametric families}

Given a $\leqslant$-antichain $\mathcal{O}$ we say that $\mathcal{O}$ is a $\leqslant$-\emph{ideal antichain} if $\excl_{\leqslant}(\mathcal{O})$ is a $\leqslant$-ideal of $\gall.$
Let $\mathbf{O}_{\leqslant}^{\mathsf{fin}}$ denote the set of all finite $\leqslant^{*}$-antichains of $2^{\gall}$ of (possibly infinite) $\leqslant$-ideal antichains.

Observe that, by \autoref{subobs}, $\mathbf{O}_{\leqslant}^{\mathsf{fin}}$ bijectively corresponds to the set of $\leqslant$-obstruction sets of the inclusion-antichains of $\leqslant$-ideals of $\gall$ in $\mathbf{I}^{\mathsf{fin}}_{\leqslant}$.
Therefore, the following variant of \autoref{bijection_rational_ideals} immediately follows.

\begin{corollary}\label{bijection_rational_obs}
There exists a unique bijection $\rho_{\ref{bijection_rational_obs}}  \colon  \mathbf{P}_\leqslant^{\mathsf{rational}} \to \mathbf{O}_\leqslant^{\mathsf{fin}}$ such that, for every $[\p]_{\sim} \in \mathbf{P}_\leqslant^{\mathsf{rational}}$, for every $\p \in [\p]_{\sim}$, $\pobs_{\leqslant}(\p) = \rho_{\ref{bijection_rational_obs}}([\p]_{\sim}).$
\end{corollary}

Moving on, we give a definition of \textsl{effective} $\leqslant$-parametric families.

\begin{definition}
A $\leqslant$-parametric family $\mathfrak{H}$ is \emph{effective} if $\mathfrak{H}$ is finite and there exists an algorithm that, given $k \in \mathbb{N}$ outputs $\{ \mathscr{H}_{k} \mid \mathscr{H} \in \mathfrak{H}\}.$
\end{definition}

There is no guarantee that all finite $\leqslant$-parametric families are effective.
Nevertheless, we can guarantee this given some additional conditions.

\begin{theorem}\label{effective_existence}
Let $[\p]_{\sim} \in \mathbf{P}_\leqslant^{\mathsf{rational}}.$
If $\leqslant$ is \FPT-decidable and all sets in $\rho_{\ref{bijection_rational_obs}}([\p]_{\sim})$ are finite, then there exists an effective $\leqslant$-parametric family $\mathfrak{H}$ such that $\p_{\mathfrak{H}} \in [\p]_{\sim}.$
\end{theorem}
\begin{proof}
Let $<_{\mathsf{enum}}$ be a total-ordering\footnote{A total-ordering is a partial-ordering in which any two elements are comparable.} of $\gall$ computed by some deterministic enumeration algorithm such that for two graphs $G$ and $H,$ if $|V(G)| < |V(H)|,$ then $G <_{\mathsf{enum}} H.$

Let $\mathbb{I} = \{ \excl_{\leqslant}(\mathcal{O}) \mid \mathcal{O} \in \rho_{\ref{bijection_rational_obs}}([\p]_{\sim})\}.$
For a $\leqslant$-ideal $\mathcal{G} \in \mathbb{I}$ of $\gall,$ we define a $\leqslant$-omnivore $\mathscr{H}^{\mathcal{G}}$ of $\mathcal{G}$ as follows.
Let $\mathcal{G}_{r} = \{ G \in \mathcal{G} \mid |V(G)| ≤ r\}.$ 
Let $\mathscr{H}^{\mathcal{G}}_{1}$ be the $<_{\mathsf{enum}}$-minimum graph in $\mathcal{G},$ such that for every graph $G \in \mathcal{G}_{1}$, $G \leqslant \mathscr{H}^{\mathcal{G}}_{1}$.
For $k > 1,$ let $\mathscr{H}^{\mathcal{G}}_{k}$ be the $<_{\mathsf{enum}}$-minimum graph in $\mathcal{G}$, such that for every graph $G \in \mathcal{G}_{k}$, $G \leqslant \mathscr{H}^{\mathcal{G}}_{k}$ and $\mathscr{H}^{\mathcal{G}}_{k-1} \leqslant \mathscr{H}^{\mathcal{G}}_{k}$. 
Then, as in the proof of \autoref{omnivore}, $\mathscr{H}^{\mathcal{G}}$ is well-defined and a $\leqslant$-omnivore of $\mathcal{G}.$
 
\begin{claim}\label{compomnivore} Given a $\leqslant$-ideal $\mathcal{G} \in \mathbb{I},$ there exists an algorithm that, given $k \in \mathbb{N}$ outputs $\mathscr{H}^{\mathcal{G}}_{k}.$
\end{claim}
\begin{cproof}
The algorithm proceeds as follows.
Start by enumerating graphs in the order they are present in $<_{\mathsf{enum}}.$
For every such graph $G,$ first check whether $G \in \Gcal,$ by checking, using the $\leqslant$-containment algorithm given by the hypothesis, whether for every graph $Z \in \obs_{\leqslant}(\Gcal),$ $Z \not\leqslant G.$
If this check fails, we move on to the next graph.
Otherwise, we then check whether $\mathscr{H}^{\mathcal{G}}_{k-1} \leqslant G.$
If this check fails, we skip this graph as before, otherwise we continue.
Now, we compute the set $\{ H \in \gall \mid \text{$|V(H)| \leq k$ and $H \in \Gcal$}\}$ by a again enumerating graphs in the order of $<_{\mathsf{enum}}$ and using the $\leqslant$-containment algorithm.
Finally, check whether $H \leqslant G,$ for every graph $H$ in the set above.
The algorithm will terminate with output the first graph for which the previous checks succeed.
By definition of $\mathscr{H}^{\mathcal{G}},$ the algorithm will in fact terminate.
\end{cproof}

We conclude by defining $\mathfrak{H} \coloneqq \{ \mathscr{H}^{\mathcal{G}} \mid \mathcal{G} \in \mathbb{I} \}.$
Note that, by \autoref{primcollunbbasis} and definition of $\leqslant$-parametric obstructions, $\pobs_{\leqslant}(\p_{\mathfrak{H}}) = \rho_{\ref{bijection_rational_obs}}([\p]_{\sim}).$
Therefore, $\p_{\mathfrak{H}} \in [\p]_{\sim}$ and \autoref{compomnivore} completes our proof.
\end{proof}

\subsection{Cores and parameterized algorithms}

Consider the class of \textsl{computable} functions which we denote as $\REC.$
We can observe that given two equivalent parameters with computable gap, if one is \FPT-decidable the other is \FPT-approximable.

\begin{lemma}\label{thmfpt} Let $\p$ and $\p'$ be graph parameters such that $\p \sim_{\REC} \p'$ and $\p'$ be \FPT-decidable. 
Then $\p$ is \FPT-approximable.
\end{lemma}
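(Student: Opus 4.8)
The plan is to unwind the definitions of the relations involved and then use the hypothesis that $\p'$ is \FPT-decidable together with the computable gap function $f$ witnessing $\p \sim_{\REC} \p'$. First I would fix such a computable $f \in \REC$ so that for every $G \in \dom(\p) = \dom(\p')$ we have $\p(G) \le f(\p'(G))$ and $\p'(G) \le f(\p(G))$. Let $\mathcal{A}$ be an effective \FPT-algorithm for $\mathcal{P}_{\p'} = (\Pi_{\p'}, \kappa_{\p'})$, running in time $g(k) \cdot |G|^{O(1)}$ for some computable $g$. The goal is to design, from $\mathcal{A}$ and $f$, an algorithm that on input $(G, k)$ either certifies $(G, f(k)) \in \Pi_{\p}$ or certifies $(G, k) \notin \Pi_{\p}$, within time $h(\p(G)) \cdot |G|^{O(1)}$ for some computable $h$; this is exactly what it means for $\p$ to be \FPT-approximable with gap $f$ (hence \FPT-approximable).

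The core of the argument is the following dichotomy, which I would run on input $(G,k)$: invoke $\mathcal{A}$ on the instance $(G, f(k))$ of $\mathcal{P}_{\p'}$. If $\mathcal{A}$ accepts, then $\p'(G) \le f(k)$, hence $\p(G) \le f(\p'(G)) \le f(f(k))$ — wait, this gives gap $f \circ f$ rather than $f$, which is still fine since $f \circ f \in \REC$; but cleaner is to note that if $\mathcal{A}$ accepts $(G,k)$ (not $(G,f(k))$) then $\p'(G) \le k$, so $\p(G) \le f(k)$ and we may output $(G, f(k)) \in \Pi_{\p}$. If $\mathcal{A}$ rejects $(G,k)$, then $\p'(G) > k$; since $\p'(G) \le f(\p(G))$, this forces $f(\p(G)) > k$, and if additionally $\p(G) \le k$ we would need $f(k') > k$ for $k' = \p(G) \le k$. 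This last step needs $f$ to be nondecreasing to conclude $\p(G) > k$ outright; to avoid assuming monotonicity of $f$, I would instead replace $f$ by $\hat f(n) := \max_{i \le n} f(i)$, which is computable, nondecreasing, and still a valid gap witness. With $\hat f$ monotone: $\mathcal{A}$ rejects $(G,k)$ $\Rightarrow$ $\p'(G) > k$ $\Rightarrow$ $k < \p'(G) \le \hat f(\p(G))$; if $\p(G) \le k$ then $\hat f(\p(G)) \le \hat f(k)$, which does not immediately contradict $k < \hat f(\p(G))$. So the clean statement is: output ``$(G,k) \notin \Pi_{\p}$'' when $\mathcal{A}$ rejects $(G, k)$, using the implication $\p'(G) > k \Rightarrow \p(G) \ge$ (something); the correct reading is that rejection of $\mathcal{A}$ on $(G,k)$ certifies $\p'(G) > k$, and we then report $(G,k) \notin \Pi_{\p}$ is WRONG in general — the honest dichotomy is the one matching the definition: run $\mathcal{A}$ on $(G,k)$; if it accepts output $(G, \hat f(k)) \in \Pi_\p$ (valid since $\p(G) \le \hat f(\p'(G)) \le \hat f(k)$); if it rejects, we know $\p'(G) > k$, hence $\hat f(\p(G)) \ge \p'(G) > k$, hence $\p(G) > $ the largest $m$ with $\hat f(m) \le k$; in particular since $\hat f(k) \ge k$... here I would simply output ``$(G,k) \notin \Pi_{\p}$'', which is correct precisely because the definition of \FPT-approximable with gap only requires that the two outputs be consistent with SOME graph, and $\p'(G) > k$ together with $\p(G) \le k$ and $\p'(G) \le \hat f(\p(G)) \le \hat f(k)$ is not contradictory — so this branch is genuinely the delicate one.

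\textbf{Main obstacle.} The subtle point, and the step I expect to require the most care, is matching the asymmetric definition of ``\FPT-approximable with gap $g$'' to the two-sided nature of $\sim_{\REC}$. The resolution is to observe that \FPT-approximability is a one-sided promise: on input $(G,k)$ we run the decider $\mathcal{A}$ for $\p'$ on $(G,k)$; by $\p' \sim_{\REC} \p$ and monotonicity of $\hat f$, acceptance yields $\p(G) \le \hat f(k)$ so we may safely declare $(G, \hat f(k)) \in \Pi_\p$, while rejection yields $\p'(G) > k \ge$ — and here we need the other inequality $\p(G) \le \hat f(\p'(G))$ run backwards, i.e. we use instead that $\mathcal{A}$ rejecting $(G, \hat f(k))$ would give $\p'(G) > \hat f(k) \ge \hat f(\p(G))$ hence (by $\p'(G) \le \hat f(\p(G))$) a contradiction, so $\p(G) > k$. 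Therefore the correct algorithm runs $\mathcal{A}$ on $(G, \hat f(k))$: if it accepts, output $(G,\hat f(k)) \in \Pi_\p$; if it rejects, then $\p'(G) > \hat f(k)$, so $\hat f(\p(G)) \ge \p'(G) > \hat f(k)$, whence $\p(G) > k$ and we output $(G,k)\notin\Pi_\p$. The running time is $g(\hat f(k)) \cdot |G|^{O(1)}$; finally, since on a \yes-ish instance $\p(G)$ is the natural parameter and $g \circ \hat f$ is computable, this is of the required form $h(\p(G)) \cdot |G|^{O(1)}$ after bounding $k$ in terms of $\p(G)$ on the relevant instances (replacing $g(\hat f(k))$ by $g(\hat f(\hat f(\p(G))))$ when $\p(G) \le k$, else the instance is trivially a \no). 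I would assemble these observations into a short formal proof, with $\hat f := n \mapsto \max_{i\le n} f(i)$ introduced at the outset to sidestep monotonicity issues.
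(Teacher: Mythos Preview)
Your final algorithm is exactly the paper's: assume (or enforce, via $\hat f(n)=\max_{i\le n}f(i)$) that the gap function is monotone, run the decider for $\p'$ on $(G,\hat f(k))$, and branch on accept/reject. The paper states this in three lines, simply asserting ``w.l.o.g.\ $f$ is monotone.''

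One slip to fix: in your accept branch you write ``output $(G,\hat f(k))\in\Pi_\p$,'' but from $\p'(G)\le\hat f(k)$ you only get $\p(G)\le\hat f(\p'(G))\le\hat f(\hat f(k))$, so the correct gap is $\hat f\circ\hat f$, not $\hat f$. You actually caught this yourself earlier in the writeup (``this gives gap $f\circ f$ rather than $f$, which is still fine''), and indeed the paper's own proof also lands on gap $f\circ f$; just carry that through consistently to the end.

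On the running time: you are right to flag that the definition asks for time $h(\p(G))\cdot|G|^{O(1)}$ rather than $h(k)\cdot|G|^{O(1)}$, and the paper's proof does not address this point at all. Your sketch of how to bound $g(\hat f(k))$ in terms of $\p(G)$ is in the right spirit but incomplete as written. A clean way to close it is to first compute $\p'(G)$ exactly by iterating the decider on $(G,0),(G,1),\ldots$ until acceptance; this costs at most $\sum_{i\le\p'(G)}g(i)\cdot|G|^{O(1)}$, which is $h(\p(G))\cdot|G|^{O(1)}$ for computable $h$ since $\p'(G)\le\hat f(\p(G))$, and the subsequent comparison with $k$ is free.
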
 
\begin{proof} Let $A_{\p'}$ be the \FPT-algorithm for $\mathcal{P}_{\p'}$ and $f \in \REC$ be such that $\p \sim_{f} \p'.$ 
Notice that, without loss of generality, we can assume that $f$ is monotone. 
The algorithm proceeds as follows: 
given a graph $G$ and $k \in \mathbb{N}$ execute $A_{\p'}$ on $G$ and $f(k).$ 
If $\p'(G) > f(k)$ then, clearly $\p(G) > k.$ 
Otherwise, if $\p'(G) ≤ f(k),$ then $\p(G) ≤ f(f(k)).$
\end{proof}

Now, let $\mathbf{F}^{\mathsf{eff}}_{\leqslant} \subseteq \mathbf{F}_{\leqslant}^{\mathsf{fin}}$ denote the set of all equivalence classes of effective $\leqslant$-parametric families, namely,
$$\mathbf{F}^{\mathsf{eff}}_{\leqslant} \coloneqq \{ [\mathfrak{G}]_{f} \mid \text{$\mathfrak{G}$ is an effective $\leqslant$-parametric family} \}$$
and let $F \subseteq \mathbb{N}^{\mathbb{N}}$ be a set of functions.
Given a $\leqslant$-rational parameter $\p,$ we define its \emph{$F$ $\leqslant$-core} as the set of all effective $\leqslant$-parametric families that are $\leqslant$-universal obstructions for $\p$ whose corresponding $\leqslant$-monotone parameters are equivalent with $F$ gap, namely,
$$F\textrm{-}\core_{\leqslant}(\p) := \{ \mathfrak{H} \in [\mathfrak{G}]_{f} \mid \text{$[\mathfrak{G}]_{f} \in \mathbf{F}^{\mathsf{eff}}_{\leqslant}$ and $\p \sim_{F} \p_{\mathfrak{H}}$} \}.$$

For simplicity, we write $\core_{\leqslant}(\p)$ instead of $\mathbb{N}^{\mathbb{N}}\textrm{-}\core_{\leqslant}(\p)$ and we call $\core_{\leqslant}(\p)$ the $\leqslant$-\emph{core} of $\p.$
We may now observe that, for $\leqslant$-rational parameters \autoref{effective_existence} implies the following corollary.

\begin{corollary}\label{corollary_fullcore} Let $[\p]_{\sim} \in \mathbf{P}_\leqslant^{\mathsf{rational}}.$ 
If $\leqslant$ is \FPT-decidable and all sets in $\rho_{\ref{bijection_rational_obs}}([\p]_{\sim})$ are finite, then all graph parameters in $[\p]_{\sim}$ have a non-empty $\leqslant$-core.
\end{corollary}

We call $\mathsf{REC}\textrm{-}\core_{\leqslant}(\p)$ the \emph{recursive} $\leqslant$-\emph{core} of $\p.$ 
We can now rephrase \autoref{thmfpt} in terms of $\leqslant$-universal obstructions.

\begin{theorem}\label{thmuobsfpt} Assume that $\leqslant$ is \FPT-decidable. 
Then  every $\leqslant$-rational parameter $\p$ with a non-empty recursive $\leqslant$-core is \FPT-approximable.
\end{theorem}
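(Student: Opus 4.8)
The plan is to reduce \autoref{thmuobsfpt} to \autoref{thmfpt} by producing, from a nonempty recursive core, a concrete \FPT-decidable parameter that is asymptotically equivalent to $\p$ with a computable gap. First I would unpack the hypothesis: since $\p$ has a nonempty $\REC$-core, there is an effective $\leq$-prime collection $\frak{H} \in \mathtt{C}^{\sf eff}_{\leq}$ with $\p \sim_{\REC} \p_{\frak{H}}$; let $f \in \REC$ be a computable function witnessing this equivalence. Effectiveness of $\frak{H}$ means $\frak{H}$ is finite, say $\frak{H} = \{\mathscr{H}^{(1)}, \dots, \mathscr{H}^{(r)}\}$, and there is an algorithm that on input $k$ outputs the finite set $\{\mathscr{H}^{(i)}_k \mid i \in [r]\}$.

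Next I would argue that $\p_{\frak{H}}$ is effectively \FPT-decidable whenever $\leq$ is \FPT-decidable. By \autoref{prime_col_new}, $\p_{\frak{H}}(G) = \min\{k \mid \forall \mathscr{H} \in \frak{H} : \mathscr{H}_k \nleq G\}$. Because $\frak{H}$ is $\leq$-monotone (prime sequences may be assumed monotone, as noted after \autoref{rationalperiodic}), the predicate ``$\exists \mathscr{H} \in \frak{H} : \mathscr{H}_k \leq G$'' is monotone in $k$, so to decide whether $\p_{\frak{H}}(G) \leq k$ it suffices to check whether $\mathscr{H}^{(i)}_{k} \nleq G$ for all $i \in [r]$: this is exactly the statement that $(G,k) \in \Pi_{\p_{\frak{H}}}$ iff none of the $r$ graphs $\mathscr{H}^{(1)}_k, \dots, \mathscr{H}^{(r)}_k$ is $\leq$-contained in $G$. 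Using the algorithm from effectiveness of $\frak{H}$ we compute these $r$ graphs in time depending only on $k$, and then we run the \FPT-algorithm for $\Pi_{\leq}$ (whose parameter is $|\mathscr{H}^{(i)}_k|$, a function of $k$) $r$ times. The total running time is $g(k) \cdot |G|^{O(1)}$ for a computable $g$, so $\p_{\frak{H}}$ is effectively \FPT-decidable.

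Finally I would invoke \autoref{thmfpt} with the pair $(\p, \p_{\frak{H}})$: we have $\p \sim_{\REC} \p_{\frak{H}}$ and $\p_{\frak{H}}$ is \FPT-decidable, hence $\p$ is \FPT-approximable, which is exactly the conclusion of the theorem. The only mildly delicate point — and the step I would write most carefully — is the bookkeeping that turns ``$\frak{H}$ effective'' plus ``$\leq$ \FPT-decidable'' into a genuine \FPT-algorithm for $\p_{\frak{H}}$: one has to be explicit that the size of $\mathscr{H}^{(i)}_k$ is bounded by a computable function of $k$ (which follows since the effective enumeration algorithm halts and outputs these graphs), so that the parameter fed into the $\Pi_{\leq}$-solver is controlled by $k$ alone, and that the minimization in $\p_{\frak{H}}$ need not actually be performed — for \FPT-decidability we only need the membership test ``$\p_{\frak{H}}(G) \leq k$'' at the given $k$. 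Everything else is a direct chaining of the cited lemmas.
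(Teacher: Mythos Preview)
Your proposal is correct and follows essentially the same approach as the paper: pick $\frak{H}$ from the recursive core, show that $\p_{\frak{H}}$ is \FPT-decidable by generating $\{\mathscr{H}^{(i)}_k\}$ via effectiveness and testing $\leq$-containment with the \FPT-algorithm for $\Pi_{\leq}$, then apply \autoref{thmfpt}. Your write-up is in fact more careful than the paper's on the bookkeeping points (computable bound on $|\mathscr{H}^{(i)}_k|$, no need to perform the minimization), which is fine.
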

\begin{proof} Let $\mathfrak{H} \in \REC\textrm{-}\core_{\leqslant}(\p).$ 
Given a graph $G$ and $k \in \mathbb{N},$ by definition, $\p_{\mathfrak{H}}(G) \leq k$ if and only if $\{ \mathscr{H}_{k} \mid \mathscr{H} \in \mathfrak{H} \} \nleqslant^{*} \{ G \}.$
Note that, since $\mathfrak{H}$ is effective, the set $\{ \mathscr{H}_{k} \mid \mathscr{H} \in \mathfrak{H} \}$ can be computed in time depending on $k.$
Then, using the \FPT $\leqslant$-containment algorithm, we can check whether $\{ \mathscr{H}_{k} \mid \mathscr{H} \in \mathfrak{H} \} \nleqslant^{*} \{ G \}.$ 
This implies that $\p_{\mathfrak{H}}$ is \FPT-decidable. 
Then our claim holds by \autoref{thmfpt}.
\end{proof}

In general, we have no way to additionally guarantee that some particular $\p$ in $[\p]_{\sim}$ has a non-empty recursive $\leqslant$-core. 
Hence, it is important to determine for which graph parameters this is the case, which would imply its \FPT-approximability, by \autoref{thmuobsfpt}.

\medskip
The algorithms in the proofs of \autoref{thmfpt} and \autoref{thmuobsfpt} provide the following conditions in order to construct the implied approximation algorithm.
\begin{corollary} If $\p$ is a $\leqslant$-monotone parameter that has a finite $\leqslant$-universal obstruction $\mathfrak{H}$, and if we are given
\begin{enumerate}
\item an algorithm that, given $k \in \mathbb{N}$, outputs $\{ \mathscr{H}_{k} \mid \mathscr{H} \in \mathfrak{H}\},$
\item an algorithm computing the gap $g$ in the relation $\p\sim_{g}\p_{\mathfrak{H}}$,
\item and an effective \FPT-algorithm for $Π_{\leqslant}$,
\end{enumerate}
then $\p$ is effectively \FPT-approximable in time $f(k)\cdot |G|^{O(1)}$, for some computable function $f$.
\end{corollary}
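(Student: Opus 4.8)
The plan is to assemble the final corollary directly from the two algorithmic results that precede it, namely \autoref{thmfpt} and \autoref{thmuobsfpt}, by tracing through the algorithms in their proofs and bookkeeping the running time. First I would observe that hypotheses (1) and (3) of the corollary are exactly what is needed to make the prime collection $\frak{H}$ behave like an effective prime collection in the sense of \autoref{thmuobsfpt}: hypothesis (1) says that the map $k \mapsto \{ \mathscr{H}_k \mid \mathscr{H} \in \frak{H}\}$ is computable, and since a universal obstruction is by definition a finite prime collection, $\frak{H} \in \mathtt{C}^{\sf eff}_{≤}$. Hypothesis (3), the effective \FPT-algorithm for $Π_{≤}$, is precisely the \FPT-decidability of $≤$ that \autoref{thmuobsfpt} requires. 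So the proof of \autoref{thmuobsfpt} already yields that $\p_{\frak{H}}$ is effectively \FPT-decidable: given $(G,k)$, compute $\{ \mathscr{H}_k \mid \mathscr{H} \in \frak{H}\}$ using the algorithm of (1), then run the \FPT $≤$-containment algorithm of (3) for each of these (finitely many) graphs against $G$ to decide whether $\p_{\frak{H}}(G) \le k$, via \autoref{prime_col_new}.

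Next I would feed this into the proof of \autoref{thmfpt}, but with hypothesis (2) supplying the explicit computable gap function: since $\frak{H}$ is a universal obstruction of $\p$ we have $\p \sim \p_{\frak{H}}$, and hypothesis (2) gives us an algorithm computing a function $g$ with $\p \sim_g \p_{\frak{H}}$, so in particular $\p \sim_{\REC} \p_{\frak{H}}$. Applying the algorithm from the proof of \autoref{thmfpt}: on input $(G,k)$, we run the (effective) \FPT-decision procedure for $\p_{\frak{H}}$ on $(G, g(k))$; if it reports $\p_{\frak{H}}(G) > g(k)$ we output $(G,k) \notin Π_\p$, and otherwise we output $(G, g(g(k))) \in Π_\p$. (As noted in the proof of \autoref{thmfpt}, we may assume $g$ monotone; if it is not, replace it by $k \mapsto \max_{i \le k} g(i)$, which is still computable.) This is exactly an \FPT-approximation of $\p$ with gap $g \circ g$.

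Finally I would tally the running time to get the claimed $f(k)\cdot |G|^{O(1)}$ bound. Computing $\{ \mathscr{H}_{g(k)} \mid \mathscr{H} \in \frak{H}\}$ takes time bounded by some computable function of $g(k)$, hence of $k$, and produces graphs of size bounded by a computable function of $k$; computing $g(k)$ (and $g(g(k))$) likewise takes time computable in $k$. Each of the constantly many calls to the effective \FPT-algorithm for $Π_{≤}$ runs in time $h(|\mathscr{H}_{g(k)}|) \cdot |G|^{O(1)}$ for a computable $h$, which is again of the form (computable function of $k$) $\cdot\, |G|^{O(1)}$. Summing the constantly many such terms and the preprocessing gives a total of $f(k) \cdot |G|^{O(1)}$ with $f$ computable, so $\p$ is effectively \FPT-approximable. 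I do not expect a genuine obstacle here; the only mild care needed is (i) confirming the gap is $g\circ g$ rather than $g$ and handling non-monotone $g$, and (ii) being explicit that "effective" throughout is justified by the computability in hypotheses (1)--(3), so that the witnessing function $f$ is itself computable rather than merely existent.
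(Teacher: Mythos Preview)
Your proposal is correct and mirrors exactly what the paper does: it simply points back to the algorithms in the proofs of \autoref{thmfpt} and \autoref{thmuobsfpt}, and you have unpacked those two steps (effective \FPT-decidability of $\p_{\frak{H}}$ from hypotheses (1) and (3), then the approximation for $\p$ from hypothesis (2)) in more detail than the paper itself. One small slip: a universal obstruction is \emph{not} finite by definition in this paper (a prime collection is merely countable); the finiteness you need is instead implicit in hypothesis (1), since an algorithm that outputs $\{\mathscr{H}_k \mid \mathscr{H}\in\frak{H}\}$ must output a finite set.
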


Notice that the algorithms of conditions \emph{1.}~and \emph{2.} exist if and only if $\p$ has a non-empty recursive $\leqslant$-core.
Therefore Condition \emph{3.} is the only additional requirement for the effectivity of the \FPT-approximability of $\p$ in \autoref{thmuobsfpt}.

\section{Towards finite universal obstructions}\label{general_section}

In \autoref{asympotic_view} we saw the central role that finite $\leqslant$-parametric families play, in obtaining finite $\leqslant$-universal obstructions for $\leqslant$-rational parameters as well as for representing directed $\leqslant$-class properties whose $\leqslant$-class obstruction sets are finite.
In this section, our goal is to identify appropriate order theoretic assumptions on $(\gall, \leqslant)$ implying that finite $\leqslant$-parametric families suffice to characterize \textsl{every} $\leqslant$-monotone parameter in terms of a finite $\leqslant$-universal obstruction.
Moreover, we identify a necessary and sufficient order theoretic condition that implies a representation of \textsl{every} $\leqslant$-class property in terms of a finite collection of $\leqslant$-parametric families.

\subsection{More implications of well-quasi-ordering}

In this first subsection we consider the implications on the existence of $\leqslant$-universal obstructions when $(\gall, \leqslant)$ is a well-quasi-ordering.

\paragraph{Obstruction chains.}

Consider a $\leqslant$-monotone parameter $\p$ and let $I \in \{ [n] \mid n \in \mathbb{N} \} \cup \{ \mathbb{N} \}$ be a countable index set. 
Let $\mathscr{C} = \langle \mathscr{C}_{i} \rangle_{i \in I}$ be a $\leqslant$-chain such that for every $i \in I,$ $\mathscr{C}_{i} \in \obs_{\leqslant}(\mathcal{G}_{\p, i}).$ 
We call $\mathscr{C}$ a \emph{$\leqslant$-obstruction chain} of $\p.$ 
In the case that $I = [n]$ for some $n \in \mathbb{N}$ we say that $\mathscr{C}$ is finite and of length $|I|,$ while in the case that $I = \mathbb{N}$ we say it is infinite. 
Given a $\leqslant$-monotone parameter $\p$, we define
$$\mathfrak{C}_{\leqslant}(\p) \coloneqq \{ \mathscr{C} \mid \textrm{$\mathscr{C}$ is a $\leqslant$-obstruction chain of $\p$} \}$$
and from this point forward let $\mathfrak{F}_{\leqslant}(\p) \cup \mathfrak{I}_{\leqslant}(\p)$ be a partition of $\mathfrak{C}_{\leqslant}(\p)$ into the set of finite ($\mathfrak{F}_{\leqslant}(\p)$) and the set of infinite ($\mathfrak{I}_{\leqslant}(\p)$) $\leqslant$-obstruction chains of $\p.$

\medskip
Recall that, by \autoref{bijection_coll_ideals}, so far we have only been able to fully characterize the $\leqslant$-class obstructions and obtain $\leqslant$-universal obstructions, only for $\leqslant$-monotone parameters whose corresponding directed $\leqslant$-class property has a finite class $\leqslant$-obstruction set.
However, one could attempt to repeat the arguments in the proof of \autoref{bijection_coll_ideals} and try to show that, by taking a $\leqslant$-omnivore for each $\leqslant$-closed class in an infinite inclusion-antichain of $\leqslant$-ideals $\mathbb{I},$ one could define a $\leqslant$-parametric family, where the $\leqslant$-class obstruction of its corresponding parameter is precisely $\mathbb{I}.$
In this way, we could argue that we find a certificate of a $\leqslant$-monotone parameter that has an infinite $\leqslant$-universal obstruction and that captures a directed $\leqslant$-class property with an infinite $\leqslant$-class obstruction set.
However, this approach seems to fail, and the intuitive reason why it does seems to lie in our inability to certify that, the two equivalence relations $\equiv^{*}$ and $\equiv^{*}_{f}$ on the set of $\leqslant$-parametric families, are equivalent in the presence of infinite $\leqslant$-parametric families.

Therefore, should we wish to prove the existence of $\leqslant$-universal obstructions for arbitrary $\leqslant$-monotone parameters we need to be more careful in how we define our $\leqslant$-parametric graphs.
In the theorem that follows, we demonstrate that, for every $\leqslant$-monotone parameter, its $\leqslant$-class obstruction consists of the inclusion-minimal classes of graphs that corresponds to the $\leqslant$-closures of the infinite $\leqslant$-obstructions chains of $\p$.
Moreover, this set of minimal infinite $\leqslant$-obstruction chains of $\p$ corresponds to a $\leqslant$-parametric family that is a $\leqslant$-universal obstruction for $\p.$

\begin{theorem}\label{idealbasis}\label{cobs_to_uobs}
Let $(\gall, \leqslant)$ be a well-quasi-ordering.
Then, for every $\leqslant$-monotone parameter $\p$, any $\lesssim$-minimization $\mathfrak{Z}$ of the set $\mathfrak{I}_{\leqslant}(\p)$ is a $\leqslant$-universal obstruction for $\p.$
Moreover,
$$\cobs_{\leqslant}(\p) = \bigcup_{\mathscr{Z} \in \mathfrak{Z}} \{ \{ \closure{\leqslant}{\mathscr{Z}}\} \}.$$ 
\end{theorem}
\begin{proof}
Since $(\gall, \leqslant)$ is a well-quasi-ordering, by \autoref{wqo_wellfouned}, $(\down{\leqslant}{\gall}, \subseteq)$ is well-founded.
Then, by \autoref{obsprime}, $\lesssim$-minimizations are well-defined.
Let $\mathfrak{Z}$ be any $\lesssim$-minimization of the set $\mathfrak{I}_{\leqslant}(\p).$
Again, by \autoref{obsprime}, we have that the set $\mathbb{Z} \coloneqq \{ \closure{\leqslant}{\mathscr{Z}} \mid \mathscr{Z} \in \mathfrak{Z} \}$ is an inclusion-antichain.
We first demonstrate that $\cobs_{\leqslant}(\p) = \mathbb{Z}.$

Let $\mathcal{G} \in \UNB_{\leqslant}(\p).$ We show that there exists $\mathscr{I} \in \mathfrak{I}_{\leqslant}(\p)$ such that $\closure{\leqslant}{\mathscr{I}} \subseteq \mathcal{G}.$
For every $k \in \mathbb{N},$ since $\p$ is unbounded in $\mathcal{G},$ there exists $G_{k} \in \obs_{\leqslant}(\mathcal{G}_{\p, k})$ such that $G_{k} \in \mathcal{G}.$ For $i \leqslant j$ we have that $\mathcal{G}_{\p,i} \subseteq \mathcal{G}_{\p,j}.$ By \autoref{subobs}, we have that $\obs_{\leqslant}(\mathcal{G}_{\p,i}) \leqslant^{*} \obs_{\leqslant}(\mathcal{G}_{\p,j}).$ Then, by definition of $\leqslant^{*},$ there exists a finite $\leqslant$-chain $\mathscr{F}^{(k)} \in \mathfrak{F}(\p)$ of length $k$ whose $\leqslant$-maximum element is $G_{k}.$ We define an infinite $\leqslant$-obstruction chain $\mathscr{I}$ of $\p$ as follows:

Let $\mathfrak{F}^{0} = \{ \mathscr{F}^{(k)} \mid k \in \mathbb{N}\}$ and $\mathcal{B}^{1} = \{ H \in \obs_{\leqslant}(\mathcal{G}_{\p,1}) \mid \exists \mathscr{F} \in \mathfrak{F}^{0} : H = \mathscr{F}_{1}\}.$ 
Choose $\mathscr{I}_{1}$ from $\mathcal{B}^{1}$ such that $\{ \mathscr{F} \in \mathfrak{F}^{0} \mid \mathscr{F}_{1} = \mathscr{I}_{1} \}$ is infinite. 
This choice is valid, since $\mathcal{B}^{1}$ is finite (by the assumption that $(\gall, \leqslant)$ is a well-quasi-ordering) and $\mathfrak{F}^{0}$ is infinite. 

Now, assume that we have defined $\mathfrak{F}^{0}, \dots, \mathfrak{F}^{i-1},$ $\mathcal{B}^{1}, \dots, \mathcal{B}^{i}$ and $\mathscr{I}_{1}, \dots, \mathscr{I}_{i},$ for some $i > 0,$ such that $\{ \mathscr{F} \in \mathfrak{F}^{i-1} \mid \mathscr{F}_{i} = \mathscr{I}_{i} \}$ is infinite. 
Define $\mathfrak{F}^{i} = \{ \mathscr{F} \in \mathfrak{F}^{i-1} \mid \mathscr{F}_{i} = \mathscr{I}_{i} \}$ and $\mathcal{B}^{i+1} = \{ H \in \obs_{\leqslant}(\mathcal{G}_{\p,i+1}) \mid \exists \mathscr{F} \in \mathfrak{F}^{i} : H = \mathscr{F}_{i+1} \}.$
Choose $\mathscr{I}_{i+1}$ from $\mathcal{B}^{i+1}$ such that $\{ \mathscr{F} \in \mathfrak{F}^{i} \mid \mathscr{F}_{i+1} = \mathscr{I}_{i+1} \}$ is infinite.
As before, the choice is valid, since $\mathcal{B}^{i+1}$ is finite (by the assumption that $(\gall, \leqslant)$ is a well-quasi-ordering) and $\mathfrak{F}^{i}$ is infinite (by the assumption that $\{ \mathscr{F} \in \mathfrak{F}^{i-1} \mid \mathscr{F}_{i} = \mathscr{I}_{i} \}$ is infinite).
By the axiom of  choice, $\mathscr{I}$ is well-defined and by its definition, $\mathscr{I} \in \mathfrak{I}_{\leqslant}(\p)$ and $\closure{\leqslant}{\mathscr{I}} \subseteq \mathcal{G}.$

Let $\mathbb{I}(\p) = \{ \closure{\leqslant}{\mathscr{I}} \mid \mathscr{I} \in \mathfrak{I}(\p) \}.$ Recall that by definition, every $\mathcal{I} \in \mathbb{I}(\p)$ contains a graph in $\obs_{\leqslant}(\mathcal{G}_{\p, k})$ for every $k \in \mathbb{N}.$ Hence, $\p$ is unbounded for any $\mathcal{I} \in \mathbb{I}(\p),$ i.e. $\mathbb{I}(\p) \subseteq \UNB_{\leqslant}(\p).$ Since for every $\mathcal{G} \in \UNB_{\leqslant}(\p),$ we have that there exists $\mathcal{I} \in \mathbb{I}(\p)$ such that $\mathcal{I} \subseteq \mathcal{G},$ it must be that $\cobs_{\leqslant}(\p) = \mathbb{Z},$ since $\mathbb{Z} = \m{\subseteq}{\mathbb{I}(\p)}$.

We now argue that $\mathfrak{Z}$ is indeed a $\leqslant$-universal obstruction for $\mathsf{p}.$
By \autoref{def:univ_obs} and \autoref{parequivsamebounded}, it suffices to show that $\B_{\leqslant}(\p) = \B_{\leqslant}(\p_{\mathfrak{Z}}).$
Let $\mathcal{G}$ be a $\leqslant$-closed class in $\B_{\leqslant}(\p_{\mathfrak{Z}}).$
This implies that there exists $c \in \Nbbb$ such that for every graph $G \in \mathcal{G},$ $\p_{\mathfrak{Z}}(G) \leq c.$
By definition of $\p_{\mathfrak{Z}},$ this implies that for every graph $G \in \mathcal{G},$ $\{ \mathscr{Z}_{c} \mid \mathscr{Z} \in \mathfrak{Z} \} \nleqslant^{*} \{ G \}.$
By definition of $\leqslant^{*},$ this implies that for every graph $G \in \mathscr{G},$ $\mathscr{Z}_{c} \not\leqslant G,$ and since $\mathcal{G}$ is $\leqslant$-closed, this implies that $\mathscr{Z}_{c} \not\in \mathcal{G},$ for every $\mathscr{Z} \in \mathfrak{Z}.$
Therefore, $\mathcal{G} \in \B_{\leqslant}(\p).$
For the reverse direction, assume that $\mathcal{G} \in \B_{\leqslant}(\p).$
This implies that there exists $c \in \Nbbb$ such that for every graph $G \in \mathcal{G},$ $\p(G) \leq c.$
In other words, for every graph $G \in \mathcal{G},$ $G \in \mathcal{G}_{\p,c}.$
This implies that for every graph $G \in \mathcal{G},$ $Z \not\leqslant G,$ for every $Z \in \obs_{\leqslant}(\mathcal{G}_{\p, c}).$
Now, it suffices to observe that by definition of $\leqslant$-obstruction chains, $\{ \mathscr{Z}_{c} \mid \mathscr{Z} \in \mathfrak{Z} \} \subseteq \obs_{\leqslant}(\mathcal{G}_{\p, c}).$
Combining the two previous remarks we have that, for every graph $G \in \mathcal{G},$ $\mathscr{Z}_{c} \not\leqslant G,$ for every $\mathscr{Z} \in \mathfrak{Z}.$
With this we conclude that $\mathcal{G} \in \B_{\leqslant}(\p_{\mathfrak{Z}}).$
\end{proof}



From the algorithmic point of view, under the well-quasi-ordering assumption, we get the following stronger version of \autoref{finiteuobs_rational}.

\begin{corollary}\label{corollary_wqo_fullcore}
Let $(\gall, \leqslant)$ be a well-quasi-ordering and $\leqslant$ be \FPT-decidable.
Then, a $\leqslant$-monotone parameter $\p$ is $\leqslant$-rational if and only if it has an effective $\leqslant$-universal obstruction.
\end{corollary}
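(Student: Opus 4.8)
The statement follows by assembling results already established in the paper, with the wqo hypothesis entering only to discharge the finiteness side-condition of \autoref{effective_existence}.

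First I would dispatch the easy direction. Suppose $\p$ admits an effective universal obstruction $\frak{H}$. By the very definition of effectiveness, $\frak{H}$ is finite, so $\p$ has a finite universal obstruction, and \autoref{finiteuobs_rational} immediately yields that $\p$ is $≤$-rational. No use of the wqo assumption is needed here.

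For the converse, assume $\p$ is $≤$-rational, so its equivalence class $\mathbf{p}$ lies in $\mathbf{P}_{≤}^{\mathsf{rational}}$. By \autoref{bijection_rational_ideals}, $\pobs(\p)=γ(\mathbf{p})$, which is a finite $≤^{*}$-antichain whose members are ideal $≤$-antichains of $2^{\gall}$. The key observation is that, since $(\gall,≤)$ is a wqo, $\gall$ contains no infinite $≤$-antichain; consequently every member of $γ(\mathbf{p})$ is a \emph{finite} set. This is exactly the hypothesis required to apply \autoref{effective_existence}: with $≤$ being \FPT-decidable and all sets in $γ(\mathbf{p})$ finite, there is an effective prime collection $\frak{H}$ with $\p_{\frak{H}}\in\mathbf{p}$, i.e.\ $\p\sim\p_{\frak{H}}$. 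Hence $\frak{H}$ is, by definition, an effective universal obstruction of $\p$.

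The only place that calls for a moment's care is the passage, in the converse direction, from ``$γ(\mathbf{p})$ consists of ideal $≤$-antichains'' to ``these antichains are finite'': this is precisely where we invoke that $(\gall,≤)$ is a wqo (well-foundedness \emph{together with} absence of infinite antichains) rather than merely well-founded, which is the standing assumption elsewhere in the text. The rest is bookkeeping over the definitions of \emph{effective}, \emph{universal obstruction}, and \emph{rational}, so I do not anticipate any genuine obstacle.
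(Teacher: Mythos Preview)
Your proposal is correct and follows essentially the same approach as the paper. The paper's proof routes the forward direction through \autoref{corollary_fullcore} (which itself is just a one-line consequence of \autoref{effective_existence}), whereas you invoke \autoref{effective_existence} directly; both arguments use the wqo hypothesis in the same way, namely to conclude that every $≤$-antichain---in particular each $\obs(\mathcal{G})$ appearing in $γ(\mathbf{p})=\pobs(\p)$---is finite.
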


\begin{proof}
Since $(\gall, \leqslant)$ is a well-quasi-ordering, for every $\mathcal{G} \in \down{\leqslant}{\gall},$ $\obs_{\leqslant}(\mathcal{G})$ is finite.
Then, the forward direction is implied by \autoref{corollary_fullcore}, since $\core_{\leqslant}(\p) \neq \emptyset.$
The reverse direction is trivially implied by \autoref{finiteuobs_rational}.
\end{proof}

Furthermore, under the well-quasi-ordering assumption we can additionally show that, every $\leqslant$-monotone parameter is $\leqslant$-real, i.e., corresponds to a graph sequence, in the sense that it is equivalent to the parameter corresponding to said sequence.

\begin{theorem}
Let $(\gall, \leqslant)$ be a well-quasi-ordering.
Then, every $\leqslant$-monotone parameter is $\leqslant$-real.
\end{theorem}
\begin{proof}
Let $\p$ be a $\leqslant$-monotone parameter.
We define the graph sequence $\mathscr{H} := \langle \obs(\mathcal{G}_{\p, k}) \rangle_{k \in \mathbb{N}}.$
We first prove the following claim.

\begin{claim}\label{claimreal}
There exists a function $f \colon \Nbbb \to \Nbbb$ such that $\m{\leqslant}{\mathscr{H}_{\geq f(k)}} = \obs_{\leqslant}(\mathcal{G}_{\p, k}).$
\end{claim}
\begin{cproof}
We define the desired function $f \colon \Nbbb \to \Nbbb$ as follows.
Fix $k \in \mathbb{N}.$
We define $f(k)$ so that $\mathscr{H}_{\geq f(k)} \cap \bigcup_{j < k} \obs(\mathcal{G}_{\p,j}) = \emptyset.$
Let $i > k.$
Since $\p$ is $\leqslant$-monotone, we have that $\mathcal{G}_{\p, k} \subseteq \mathcal{G}_{\p, i}.$
Then, by \autoref{subobs}, we have that $\obs(\mathcal{G}_{\p,k}) \leqslant^{*} \obs(\mathcal{G}_{\p, i}).$
Now, observe that $\mathscr{H}_{\geq f(k)} = \bigcup_{j \geq k} \obs(\mathcal{G}_{\p, j}).$
Thus we have that for every $H \in \mathscr{H}_{\geq f(k)}$ there exists $H' \in \obs(\mathcal{G}_{\p, k})$ such that $H' \leqslant H$ and moreover, since $\obs(\mathcal{G}_{\p, k})$ is an $\leqslant$-antichain, $\m{\leqslant}{\mathscr{H}_{\geq f(k)}} = \obs(\mathcal{G}_{\p,k}).$
\end{cproof}

We show that $\p \sim \p_{\mathscr{H}}.$ Let $G$ be a graph such that $\p(G) \leq k.$
Then for every $H \in \obs_{\leqslant}(\mathcal{G}_{\p, k}),$ $H \nleqslant G.$
By \autoref{claimreal}, $\m{\leqslant}{\mathscr{H}_{\geq f(k)}} = \obs(\mathcal{G}_{\p,k}).$
Then, by definition of $\p_{\mathfrak{H}},$ it is implied that $\p_{\mathscr{H}}(G) \leq f(k).$
Now, let $G$ be a graph such that $\p_{\mathscr{H}}(G) \leq k.$ 
Let $i \in \mathbb{N}$ be maximum such that $\obs(\mathcal{G}_{\p, i}) \cap \{ \mathscr{H}_{j} \mid j \in [k] \} \neq \emptyset.$
By \autoref{claimreal}, $\m{\leqslant}{\mathscr{H}_{\geq f(i+1)}} = \obs(\mathcal{G}_{\p, i+1}).$ 
By definition of $\mathscr{H},$ $f(i+1) \geq k.$ Then, clearly $\m{\leqslant}{\mathscr{H}_{\geq k}} \leqslant^{*} \m{\leqslant}{\mathscr{H}_{\geq f(i+1)}}$ and thus $\obs(\mathcal{G}_{\p, i+1}) \nleqslant^{*} \{ G \},$ which implies $\p(G) \leq i+1.$
\end{proof}

Moreover, we show that, under the well-quasi-ordering assumption, every graph sequence (not necessarily rational as in \autoref{rationalperiodic}) also corresponds to a $\leqslant$-parametric family with respect to equivalence of their corresponding parameters.

\begin{theorem}
Let $(\gall, \leqslant)$ be a well-quasi-ordering.
Then, for every graph sequence $\mathscr{H}$ there exists a $\leqslant$-parametric family $\mathfrak{H}$ such that $\p_{\mathscr{H}} \sim \p_{\mathfrak{H}}.$
\end{theorem}
\begin{proof}
Let $\mathfrak{H}$ be a $\leqslant$-parametric family defined as any $\lesssim$-minimization of the set of infinite obstruction $\leqslant$-chains in $\mathfrak{I}(\p_{\mathscr{H}})$.
Note that, since $(\gall, \leqslant)$ is a well-quasi-ordering, by \autoref{wqo_wellfouned}, $(\down{\leqslant}{\gall}, \subseteq)$ is well-founded.
Then, by \autoref{obsprime}, $\lesssim$-minimizations are well-defined.
Then, our claim follows by \autoref{idealbasis}, since it implies that $\mathfrak{H}$ is a $\leqslant$-universal obstruction of $\p_{\mathscr{H}}$ and therefore $\p_{\mathscr{H}} \sim \p_{\mathfrak{H}}.$
\end{proof}

\subsection{Finite representation of class properties}
\label{chakmrkfinfoerobes}

In the previous subsection we saw that under the well-quasi-ordering assumption, every $\leqslant$-monotone parameter admits a $\leqslant$-universal obstruction.
In this subsection, we establish sufficient order theoretic conditions that moreover certify the existence of finite $\leqslant$-universal obstructions for all $\leqslant$-monotone parameters.
Moreover, we study the potential of these order-theoretic assumptions, paired with our viewpoint of $\leqslant$-parametric families, in obtaining a representation of every $\leqslant$-class property via a finite set of finite $\leqslant$-parametric families.

\medskip
As we saw in \autoref{idealbasis}, under the assumption that $(\gall, \leqslant)$ is a well-quasi-ordering, we can show that every $\leqslant$-monotone parameter admits a $\leqslant$-universal obstruction.
However, we would like to identify a sufficient order-theoretic condition which furthermore implies the existence of finite $\leqslant$-universal obstructions.
Such a condition of course has to be ``stronger'' that the well-quasi-ordering assumption.

Therefore it is natural to consider whether the ``second order'' well-quasi-ordering notion, i.e., an $\omega^{2}$-well-quasi-ordering suffices for our purposes.
Towards this direction, \autoref{wqo_implies_omega2_equiv_class_obs_finite} tells us that assuming that $(\gall, \leqslant)$ is an $\omega^{2}$-well-quasi-ordering, suffices to show that the $\leqslant$-class obstruction set of every $\leqslant$-class property is finite.

The observation above directly allows for a generalization of \autoref{finite_directed_properties_representable}.

\begin{theorem}\label{directed_properties_representable}
Let $(\gall, \leqslant)$ be an $\omega^{2}$-well-quasi-ordering and $\mathbb{CP}$ be any $\leqslant$-class property.
Then, $\mathbb{CP}$ is representable by a finite $\leqslant$-parametric family if and only $\mathbb{CP}$ is directed.
\end{theorem}

Now, let $\mathbf{F}_{\leqslant}$ denote the set of all equivalence classes of $\equiv^{*}_{f}$, namely,
$$\mathbf{F}_{\leqslant} \coloneqq \{ [\mathfrak{H}]_{f} \mid \text{$\mathfrak{H}$ is a $\leqslant$-parametric family}\}$$
and let $\mathbf{I}_{\leqslant}$ denote the set of all inclusion-antichains of $\leqslant$-ideals of $\gall,$ namely,
$$\mathbf{I}_{\leqslant} \coloneqq \{ \mathbb{I} \in 2^{\idl{\leqslant}{\gall}} \mid \text{$\mathbb{I}$ is an inclusion-antichain}\}.$$

\autoref{wqo_implies_omega2_equiv_class_obs_finite} has as a consequence that $\mathbf{I}_{\leqslant}$ is equal to its restricted counterpart $\mathbf{I}^{\mathsf{fin}}_{\leqslant},$ which we introduced in \autoref{subsec_omnivores}, and via \autoref{obsprime}, that $\mathbf{F}_{\leqslant}$ is also equal to its restricted counterpart $\mathbf{F}^{\mathsf{fin}}_{\leqslant}.$
This implies that the bijection $\rho_{\ref{bijection_coll_ideals}} \colon \mathbf{F}^{\mathsf{fin}}_{\leqslant} \to \mathbf{I}^{\mathsf{fin}}_{\leqslant}$ extends to a bijection between $\mathbf{F}_{\leqslant}$ and $\mathbf{I}_{\leqslant}.$

\begin{observation}\label{parametric_families_correspond_ideals}
Let $(\gall, \leqslant)$ be an $\omega^{2}$-well-quasi-ordering.
Then, $\rho_{\ref{bijection_coll_ideals}} \colon \mathbf{F}_{\leqslant} \to \mathbf{I}_{\leqslant}$ is a unique bijection such that, for every $[\mathfrak{G}]_{f} \in \mathbf{F}_{\leqslant},$ for every $\mathfrak{H} \in [\mathfrak{G}]_{f},$ $\cobs_{\leqslant}(\p_{\mathfrak{H}}) = \rho_{\ref{bijection_coll_ideals}}([\mathfrak{G}]_{f}).$
\end{observation}

Also, let $\mathbf{P}_{\leqslant}$ denote the set of all equivalence classes (with respect to $\sim$) of $\leqslant$-monotone parameters, namely,
$$\mathbf{P}_{\leqslant} \coloneqq \{ [\p]_{\sim} \mid \textrm{$\p$ is a $\leqslant$-monotone parameter}\}.$$

Now, there are a number of ways to see that, the $\omega^{2}$-well-quasi-ordering assumption, also implies that $\mathbf{P}_{\leqslant}$ is equal to its restricted counterpart $\mathbf{P}_{\leqslant}^{\mathsf{rational}},$ i.e., that every $\leqslant$-monotone parameter is $\leqslant$-rational.
One way is to observe that, by \autoref{params_obs_ideals}, the $\leqslant$-class obstruction of every $\leqslant$-monotone parameter is an inclusion-antichain of $\leqslant$-ideals of $\gall.$
Since, by \autoref{wqo_implies_omega2_equiv_class_obs_finite}, every such inclusion-antichain is finite, it is implied that the bijection $\rho_{\ref{bijection_rational_ideals}} \colon \mathbf{P}_{\leqslant}^{\mathsf{rational}} \to \mathbf{I}_{\leqslant}^{\mathsf{fin}}$ extends to a bijection between $\mathbf{P}_{\leqslant}$ and $\mathbf{I}_{\leqslant}.$

\begin{observation}
Let $(\gall, \leqslant)$ be an $\omega^{2}$-well-quasi-ordering.
Then, $\rho_{\ref{bijection_rational_ideals}} \colon \mathbf{P}_{\leqslant} \to \mathbf{I}_{\leqslant}$ is a unique bijection such that, for every $[\p]_{\sim} \in \mathbf{P}_{\leqslant}$, for every $\p \in [\p]_{\sim}$, $\cobs_{\leqslant}(\p) = \rho_{\ref{bijection_rational_ideals}}([\p]_{\sim})$.
Moreover, $\mathbf{P}_{\leqslant}$ is countable.
\end{observation}

\paragraph{Finite universal obstructions.}

As a corollary we obtain that the $\omega^{2}$-well-quasi-ordering assumption suffices to prove that every $\leqslant$-monotone parameter has a finite $\leqslant$-universal obstruction.

\begin{theorem}\label{omega2_finite_univ_obs} Let $(\gall, \leqslant)$ be an $\omega^{2}$-well-quasi-ordering.
Then, every $\leqslant$-monotone parameter has a finite $\leqslant$-universal obstruction.
\end{theorem}

From the algorithmic point of view, under the $\omega^{2}$-well-quasi-ordering, we get the following variant of \autoref{omega2_finite_univ_obs}, which is directly implied by \autoref{omega2_finite_univ_obs} and the fact that $\leqslant$-obstruction sets are finite for every $\leqslant$-closed class.

\begin{corollary}\label{corollary_wqo2_fullcore}
Let $(\gall, \leqslant)$ be an $\omega^{2}$-well-quasi-ordering and $\leqslant$ be \FPT-decidable.
Then, every $\leqslant$-monotone parameter $\p$ has an effective $\leqslant$-universal obstruction.
\end{corollary}

It is worthwhile to stress that even under the $\omega^{2}$-well-quasi-ordering assumption, \autoref{corollary_wqo2_fullcore} still does not guarantee the \FPT-approximability of $\p$ implied by \autoref{thmuobsfpt}.
For this, we morever need to find an efficient universal obstruction with a gap that is recursive.
We have no guarantees that such a universal obstruction exists. 
In practice, for explicit proofs where we find universal obstructions for parameters, these are always effective and the gap is typically constructive, so they can be used for deriving effective \FPT-approximation algorithms.

\paragraph{Representation of class properties.}

To conclude, we saw in the form of \autoref{directed_properties_representable} that, under the $\omega^{2}$-well-quasi-ordering assumption, every directed $\leqslant$-class property is representable by a finite $\leqslant$-parametric family.
We can in fact show that we can represent every $\leqslant$-class property by a finite set of finite $\leqslant$-parametric families.

\begin{theorem}\label{class_properties_rep_parametric_families} Let $(\gall, \leqslant)$ be an $\omega^{2}$-well-quasi-ordering.
Then, every $\leqslant$-class property is representable by a finite $\lesssim^{*}$-antichain of finite $\leqslant$-parametric families.
\end{theorem}
\begin{proof}
By \autoref{master_template} (\emph{1.} $\to$ \emph{4.}), every $\leqslant$-class property $\mathbb{CP}$ is equal to the union of a finite inclusion-antichain $\{ \mathbb{D}^{1}, \ldots, \mathbb{D}^{c} \},$ $c \in \Nbbb,$ of directed $\leqslant$-class properties.
By \autoref{wqo_implies_directed_ideal}, for every $i \in [c],$ $\cobs_{\leqslant}(\mathbb{D}^{i})$ is an inclusion-antichain of $\leqslant$-ideals of $\gall,$ which by \autoref{wqo_implies_omega2_equiv_class_obs_finite}, is moreover finite.
Then, by \autoref{parametric_families_correspond_ideals}, $\rho_{\ref{bijection_coll_ideals}}^{-1}(\{ \mathbb{D}^{1}, \ldots, \mathbb{D}^{c} \}) = \{ [\mathfrak{G}^{1}]_{f}, \ldots, [\mathfrak{G}^{c}]_{f} \}$ is a set of distinct equivalence classes of $\equiv^{*}_{f}$, such that, for every $\mathfrak{H} \in [\mathfrak{G}^{i}],$ $i \in [c],$ $\cobs_{\leqslant}(\p_{\mathfrak{H}}) = \rho_{\ref{bijection_coll_ideals}}([\mathfrak{G}^{i}]) = \cobs_{\leqslant}(\mathbb{D}^{i}).$
Now, we define the set $\mathbb{H} \coloneqq \{ \mathfrak{H}^{i} \mid i \in [c] \},$ by arbitrarily choosing some $\mathfrak{H}^{i} \in [\mathfrak{G}^{i}],$ $i \in [c].$
Clearly, $\mathbf{H}$ is a $\lesssim^{*}$-antichain.
Moreover, $\bigcup_{i \in [c]} \B_{\leqslant}(\p_{\mathfrak{H}^{i}}) = \bigcup_{i \in [c]} \mathbb{D}^{i} = \mathbb{CP}$ and therefore we conclude that $\mathbb{CP}$ is representable by the finite $\lesssim^{*}$-antichain of finite $\leqslant$-parametric families $\mathbf{H}$.
\end{proof}

In fact, under the well-quasi-ordering assumption, we can prove that both \autoref{directed_properties_representable} and \autoref{class_properties_rep_parametric_families} are equivalent to $(\gall, \leqslant)$ being an $\omega^{2}$-well-quasi-ordering.

\begin{theorem}\label{final_result}
Let $(\gall, \leqslant)$ be a well-quasi-ordering.
Then, the following statements are equivalent:
\begin{enumerate}
\item $(\gall, \leqslant)$ is an $\omega^{2}$-well-quasi-ordering;
\item every directed $\leqslant$-class property is representable by a finite $\leqslant$-parametric family;
\item every $\leqslant$-class property is representable by a finite $\lesssim^{*}$-antichain of finite $\leqslant$-parametric families.
\end{enumerate}
\end{theorem}
\begin{proof}
We first show the equivalence between \emph{1.} and \emph{2.}.
By \autoref{directed_properties_representable}, \emph{1.} directly implies \emph{2.}
For the reverse direction, let $\mathbb{CP}$ be a directed $\leqslant$-class property representable by a finite $\leqslant$-parametric family $\mathfrak{H}.$
Since $\mathbb{CP} = \B_{\leqslant}(\p_{\mathfrak{H}}),$ we have that $\cobs_{\leqslant}(\mathbb{CP}) = \cobs_{\leqslant}(\p_{\mathfrak{H}})$ and therefore $\cobs_{\leqslant}(\mathbb{CP})$ is finite.
Therefore, by \autoref{master_template} (\emph{3.} $\to$ \emph{1.}) $(\gall, \leqslant)$ is an $\omega^{2}$-well-quasi-ordering.

Now, we show the equivalence between \emph{1.} and \emph{3.}.
By \autoref{class_properties_rep_parametric_families}, \emph{1.} directly implies \emph{3}.
For the reverse direction, let $\mathbb{CP}$ be a $\leqslant$-class property representable by a finite $\lesssim^{*}$-antichain $\{ \mathfrak{H}^{1}, \ldots, \mathfrak{H}^{c} \},$ $c \in \Nbbb$ of finite $\leqslant$-parametric families.
By definition of representability, we have that $\mathbb{CP} = \bigcup_{i \in [c]} \B_{\leqslant}(\p_{\mathfrak{H}^{i}}).$
Now, note that by \autoref{bp_is_directed}, $\B_{\leqslant}(\p_{\mathfrak{H}}^{i})$ is a directed $\leqslant$-class property, for every $i \in [c].$
Next, we argue that $\B_{\leqslant}(\p_{\mathfrak{H}^{i}})$ is inclusion-incomparable with $\B_{\leqslant}(\p_{\mathfrak{H}^{j}}),$ for every $i \neq j \in [c].$
By \autoref{finite_param_families_equiv_params}, we have that $\p_{\mathfrak{H}^{i}} \not\preceq \p_{\mathfrak{H}^{j}}$ and
$\p_{\mathfrak{H}^{j}} \not\preceq \p_{\mathfrak{H}^{i}}.$
Then, by \autoref{parsimbounded}, we conclude that $\B_{\leqslant}(\p_{\mathfrak{H}^{i}}) \not\subseteq \B_{\leqslant}(\p_{\mathfrak{H}^{j}})$ and $\B_{\leqslant}(\p_{\mathfrak{H}^{j}}) \not\subseteq \B_{\leqslant}(\p_{\mathfrak{H}^{i}}).$
Now, we may conclude by applying \autoref{master_template} (\emph{4.} $\to$ \emph{1.}).
\end{proof}

\section{Some examples of obstructions for graph parameters}\label{examples_section}
 
In this section we present obstructions for a small indicative sample of graph parameters that are monotone under certain partial-ordering relations on graphs.
All results of this section are consequences of known results for the graph parameters that we present.

\subsection{One more example of a minor-monotone parameter} 
\label{bicpw}

Throughout the previous sections we presented the minor-monotone parameters \pw and \bipw.
In the next paragraph we present one more (perhaps the most famous one) example of such a graph parameter.

\paragraph{Treewidth.}

The graph parameter treewidth was defined by Robertson and Seymour in \cite{RobertsonS84GraphminorsIII} and used as a cornerstone parameter of their Graph Minors series (see also \cite{BerteleB73onno}).
Among the (many) equivalent definitions of treewidth we pick the following that is similar to the definition of pathwidth that we gave in \autoref{gt_conc}.
The \emph{treewidth} of a graph $G,$ denoted by $\tw(G),$ is defined as the minimum $k$ for which there is a vertex ordering $\langle v_{1},\ldots,v_{n} \rangle$ of the vertices of $G$ such that, for each $i \in [n],$ there are at most $k$ vertices in $\{ v_{1}, \ldots, v_{i-1} \}$ that are adjacent with vertices in the connected component of $G[\{v_{i}, \ldots, v_{n} \}]$ that contains $v_{i}$ (see \cite{DendrisKT97FugitiveSearch}).

According to the \textsl{grid theorem}, proved by Robertson and Seymour in~\cite{RobertsonS86GMV} (see also ~\cite{RobertsonST94Quickly,DiestelJGT99high,ChekuriC16Polynomial,ChuzhoyT21Towards}), there exists a function $f \colon \nton$ such that every graph of treewidth at least $f(k)$ contains as a minor the $(k\times k)$-grid $Γ_{k}.$\footnote{The $(k\times k)$-grid is defined as the Cartesian product of two paths on $k$ vertices.}
As $\tw(Γ_k) \geq k$ (see e.g., \cite[Lemma 88]{Bodlaender98}), it follows that the singleton $\{Γ\},$ where  $Γ = \langle Γ_{k} \rangle_{k \in \mathbb{N}},$ is a $\leqslant_{\mathsf{m}}$-universal obstruction of $\tw.$
Using our terminology, it follows that $\obs_{\leqslant_{\mathsf{m}}}(\Gcal_{\tw, k})$ contains a planar graph, for every $k.$
This means that by considering one planar graph from each $\obs_{\leqslant_{\mathsf{m}}}(\Gcal_{\tw, k}), k\in\mathbb{N},$ we define the obstruction set of the classes $\Gcal_{\p', k}, k \in \Nbbb,$ of a graph parameter $\p'$ that is equivalent to $\tw.$
In other words, among the (exponentially many, according to \cite{Ramachandramurthi97thes}) obstructions in $\obs_{\leqslant_{\mathsf{m}}}(\Gcal_{\tw, k}),$ \textsl{just one of them} is sufficient to determine the approximate behaviour of treewidth and this holds for \textsl{every} other parameter that is equivalent to treewidth (see \cite{HarveyW17ParametersTied}).
The same phenomenon has been observed for many other parameters for which similar approximate characterizations are known.
It is also easy to observe that the class $\gplanar$ of all planar graphs is exactly the class of all minors of grids (see e.g., \cite[(1.5)]{RobertsonST94Quickly}). 
We conclude with the following proposition.

\begin{proposition}
The set $\{\Gamma\}$ is a minor-universal obstruction for $\tw.$
Moreover, $\cobs_{\leqslant_{\mathsf{m}}}(\tw)=\{\gplanar\}$ and $\pobs_{\leqslant_{\mathsf{m}}}(\tw)=\big\{\{K_5,K_{3,3}\}\big\}.$
\end{proposition}

In the spirit of \autoref{relationequiv}, observe that
$$\tw \preceq \bipw \preceq \pw$$
if and only if
$$\{ \gforest \} \subseteq^{*} \{ \gfapex, \gouterplanar \} \subseteq^{*} \{ \gplanar \}$$
if and only if
$$\big\{ \{ K_{3} \} \big\} \leqslant^{**}_{\mathsf{m}} \big\{ \{ K_4, S_3, 2\cdot K_3 \}, \{ K_4, K_{2,3} \} \big\} \leqslant^{**}_{\mathsf{m}} \big\{ \{ K_{5}, K_{3, 3} \} \big\}.$$

\subsection{Two examples of immersion-monotone parameters}\label{immersion_subsection}

In this subsection we consider graphs that may contain multiple edges but no loops. 
For this, given a graph $G,$ we see its edge set as a multiset. 
We use $\gall^{\mathsf{e}}$ to denote the class of all graphs with multiple edges. 

Let $G$ be a graph and let $e_1 = \{x,y\}$ and $e_{2} = \{y,z\}$ be two edges of $G$ where $y$ is a common endpoint of both.
The result of the operation of \emph{lifting} the edges $e_{1}$ and $e_{2}$ in $G$ is the graph obtained from $G$ if we first remove $e_{1}$ and $e_{2}$ from $G$ and then add the edge $\{ x, z \}.$
As we deal with multigraphs, we agree that if any of $e_{1}$ and $e_{2}$ has multiplicity bigger than two, then its removal reduces its multiplicity. 
Also if the edge $\{x, z\}$ already exists, then we just increase its multiplicity by one.
Moreover, if $e_{1}$ and $e_{2}$ are parallel edges then the lifting operation simply removes both of them.
That is because we consider multigraphs with only multiple edges and therefore we suppress all loops that may arise from the lifting operation.

We define the \textsl{immersion} relation on the set of graphs with multiple edges as follows.
We say that a graph $H$ is an \emph{immersion} of a graph $G,$ denoted by $H \leqslant_{\mathsf{i}} G$ if $H$ can be obtained by a subgraph of $G$ after a (possibly empty) sequence of liftings of pairs of edges with common endpoints.
Observe that if $H$ is a topological minor of $G$ then $H$ is also an immersion of $G$ (but not vice versa).
Recall that Robertson and Seymour proved in \cite{RobertsonS10GraphminorsXXIII} that $\leqslant_{\mathsf{i}}$ is a wqo in $\gall^{\mathsf{e}}.$
This directly implies that for every $\mathcal{G} \in \mathsf{Down}_{\leqslant_{\mathsf{i}}}(\gall),$ $\obs_{\leqslant_{\mathsf{i}}}(\mathcal{G})$ is a finite set.

\paragraph{Degree.}

A (simple) example of an immersion-monotone parameter is the \emph{degree} $\Delta,$ 
where $\Delta(G)$ is the maximum number of edges incident to a vertex of $G.$ 
Let $\mathscr{K}^{\mathsf{s}}=\langle K_{1,k} \rangle_{k\in\mathbb{N}}$ be the graph sequence of \emph{star graphs}.
Note that star graphs are simple graphs.
We define $K_{1, 0}$ to be $K_{1}$, i.e., a graph with a single vertex.
Moreover, let $Θ = \langle θ_{k }\rangle_{k \in \mathbb{N}},$ where $θ_{k}$ is the graph on two vertices with $k$ parallel edges. 
Let also $\mathcal{C}^{S}$ be the set of all star forests with at most one component on at least three vertices and $\mathcal{C}^{θ} := \{ θ_{k} \mid k \in \mathbb{N}_{≥1}\} \cup \{K_{1}, K_{0} \}.$
It is not difficult to see that using the terminology introduced in this paper, we get the following:

\begin{proposition} The set $\mathfrak{D} = \{Θ,\mathscr{K}^{\mathsf{s}}\}$ is an immersion-universal obstruction for $\Delta.$ 
Moreover, it holds that $\cobs_{\leqslant_{\mathsf{i}}}(Δ) = \{\mathcal{C}^{θ},\mathcal{C}^{S}\}$ and $\pobs_{\leqslant_{\mathsf{i}}}(Δ) = \big\{ \{3 \cdot K_{1}\}, \{ θ_{2}, 2\cdot P_{3}, P_{4} \} \big\}$
\end{proposition}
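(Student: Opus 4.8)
The plan is to establish two things: first, that $\mathfrak{D}=\{\Theta,\mathscr{K}^{\mathsf{s}}\}$ is a $\le_{\mathsf{i}}$-prime collection with $\p_{\mathfrak{D}}\sim\Delta^{\!\mathsf{e}}$, i.e. a $\le_{\mathsf{i}}$-universal obstruction for $\Delta^{\!\mathsf{e}}$; and second, that the downward closures $\closure{}{\Theta}$ and $\closure{}{\mathscr{K}^{\mathsf{s}}}$ are the classes $\mathcal{C}^{\theta}$ and $\mathcal{C}^{S}$. Granting these, \autoref{uobs_to_cobs} yields $\cobs_{\le_{\mathsf{i}}}(\Delta^{\!\mathsf{e}})=\{\closure{}{\Theta},\closure{}{\mathscr{K}^{\mathsf{s}}}\}=\{\mathcal{C}^{\theta},\mathcal{C}^{S}\}$ (and \autoref{corollary_uobs_ideal} together with \autoref{omnivore} confirm that these are $\le_{\mathsf{i}}$-ideals), after which $\pobs_{\le_{\mathsf{i}}}(\Delta^{\!\mathsf{e}})$ is read off from its definition as $\{\obs_{\le_{\mathsf{i}}}(\mathcal{C}^{\theta}),\obs_{\le_{\mathsf{i}}}(\mathcal{C}^{S})\}$, a pair that a short inspection of small graphs evaluates.

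That $\mathfrak{D}$ is a $\le_{\mathsf{i}}$-prime collection is direct. Both $\Theta$ and $\mathscr{K}^{\mathsf{s}}$ are $\le_{\mathsf{i}}$-monotone, since $\theta_i$ is a subgraph of $\theta_j$ and $K_{1,i}$ a subgraph of $K_{1,j}$ whenever $i\le j$; hence each has $\le_{\mathsf{i}}$-width $1$ and is prime. They form a $\lesssim$-antichain: $\mathscr{K}^{\mathsf{s}}\not\lesssim\Theta$ because an immersion cannot increase the number of vertices, so $K_{1,3}\not\le_{\mathsf{i}}\theta_m$ for every $m$; and $\Theta\not\lesssim\mathscr{K}^{\mathsf{s}}$ because every immersion of a simple graph is simple (a lifting at the centre of a star only adds an edge between two former leaves, never a parallel edge), so the multigraph $\theta_2$ is an immersion of no $K_{1,m}$.

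The core is $\p_{\mathfrak{D}}\sim\Delta^{\!\mathsf{e}}$, for which I would use the edge-disjoint-paths description of immersion: $\theta_k\le_{\mathsf{i}}G$ exactly when some two distinct vertices of $G$ are joined by $k$ pairwise edge-disjoint paths, and $K_{1,k}\le_{\mathsf{i}}G$ exactly when some vertex of $G$ is joined by $k$ pairwise edge-disjoint paths to $k$ distinct other vertices. By \autoref{prime_col_new} together with the monotonicity of the two sequences, $\p_{\mathfrak{D}}(G)=1+\max(a_G,b_G)$, where $a_G$ (resp. $b_G$) is the largest $k$ with $\theta_k\le_{\mathsf{i}}G$ (resp. $K_{1,k}\le_{\mathsf{i}}G$). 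In either witnessing configuration the first edges of the $k$ paths are pairwise distinct and all incident to one vertex, which therefore has edge-degree at least $k$; hence $a_G,b_G\le\Delta^{\!\mathsf{e}}(G)$ and $\p_{\mathfrak{D}}(G)\le\Delta^{\!\mathsf{e}}(G)+1$. Conversely, let $v$ realize $d:=\Delta^{\!\mathsf{e}}(G)$ and have $t$ distinct neighbours: choosing one edge to each neighbour gives $K_{1,t}\le_{\mathsf{i}}G$, so $b_G\ge t$, while by pigeonhole some neighbour is joined to $v$ by at least $\lceil d/t\rceil$ parallel edges, so $\theta_{\lceil d/t\rceil}\le_{\mathsf{i}}G$ and $a_G\ge d/t$; thus $\max(a_G,b_G)\ge\max(t,d/t)\ge\sqrt{d}$, i.e. $\Delta^{\!\mathsf{e}}(G)\le\p_{\mathfrak{D}}(G)^2$. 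Choosing $f(x)=x^2+x+1$ witnesses $\p_{\mathfrak{D}}\sim\Delta^{\!\mathsf{e}}$. I expect this balancing of the ``many distinct neighbours'' alternative against the ``many parallel edges'' alternative (and the attendant care with the immersion-to-paths translation) to be the main, if elementary, obstacle.

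Finally, I would identify the closures. An immersion preserves the number of vertices, and the only lifting available inside $\theta_k$ merges its two parallel edges into a loop, which is disallowed; hence $\closure{}{\Theta}$ is precisely the set of graphs on at most two vertices, namely $\theta_0=2\cdot K_1$, $\theta_1=K_2$, $\theta_2,\ldots$ together with $K_1$ and $K_0$ — the class $\mathcal{C}^{\theta}$. Since $3\cdot K_1\le_{\mathsf{i}}G$ for every $G$ with at least three vertices, and every proper immersion of $3\cdot K_1$ has fewer vertices, $\obs_{\le_{\mathsf{i}}}(\mathcal{C}^{\theta})=\{3\cdot K_1\}$. For stars, unravelling ``subgraph then liftings'' shows that the immersions of $K_{1,k}$ are exactly the disjoint unions of one star, a matching, and isolated vertices; so $\closure{}{\mathscr{K}^{\mathsf{s}}}=\mathcal{C}^{S}$, and a short finite check over the minimal graphs lying outside this class gives $\obs_{\le_{\mathsf{i}}}(\mathcal{C}^{S})$. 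Both $\closure{}{\Theta}$ and $\closure{}{\mathscr{K}^{\mathsf{s}}}$ are up-directed, hence $\le_{\mathsf{i}}$-ideals, consistently with \autoref{omnivore} and \autoref{corollary_uobs_ideal}; feeding all of this into \autoref{uobs_to_cobs} and the definition of $\pobs_{\le_{\mathsf{i}}}$ finishes the proof.
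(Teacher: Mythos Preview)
The paper supplies no proof here (it says only that the proposition is ``not difficult to see''), so your argument must stand on its own, and for the most part it does.

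Your proof that $\mathfrak{D}$ is a $\le_{\mathsf{i}}$-prime collection and that $\p_{\mathfrak{D}}\sim\Delta^{\!\mathsf{e}}$ is correct; the split into ``many distinct neighbours'' versus ``high edge-multiplicity'' and the resulting $\sqrt{d}$ bound are exactly the right idea. One phrasing issue: the blanket claim ``every immersion of a simple graph is simple'' is false (for instance $\theta_2\le_{\mathsf{i}}K_3$, by lifting two sides of the triangle). Your parenthetical already supplies the correct argument for stars specifically, so just drop the general statement and keep the star-specific reasoning.

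Your identification of the downward closures is also correct: $\closure{}{\Theta}$ is the class of all graphs on at most two vertices, and $\closure{}{\mathscr{K}^{\mathsf{s}}}$ is exactly the class of simple graphs with at most one vertex of degree at least two (equivalently: one star plus a matching plus isolated vertices). Note that the latter is \emph{strictly larger} than the paper's $\mathcal{C}^{S}$ as literally defined (``all stars and all edgeless graphs''), which is not even immersion-closed --- $2K_2\le_{\mathsf{i}}K_{1,3}$, yet $2K_2$ is neither a star nor edgeless --- so the mismatch is in the paper's definition, not in your computation.

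The real gap is the last step. You defer $\obs_{\le_{\mathsf{i}}}(\closure{}{\mathscr{K}^{\mathsf{s}}})$ to ``a short finite check'', but actually carrying it out shows that the four-vertex path $P_4$ is \emph{also} a minimal obstruction: it has two vertices of degree~$2$, hence lies outside $\closure{}{\mathscr{K}^{\mathsf{s}}}$ (in any immersion model in $K_{1,k}$, each degree-$2$ vertex of $P_4$ would have to map to the centre); every proper immersion of $P_4$ --- obtained by deleting a vertex, deleting an edge, or lifting at an internal vertex --- has at most one vertex of degree $\ge 2$ and so lies in the class; and neither $\theta_2$ (since $P_4$ is a tree) nor $2\cdot P_3$ (by vertex count) immerses into $P_4$. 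Thus the correct set is $\{\theta_2,\,P_4,\,2\cdot P_3\}$, and the paper's stated $\{\theta_2,\,2\cdot P_3\}$ is incomplete. Your outline is sound in structure, but this is precisely the place where the ``short check'' cannot be waved through --- doing it reveals an error in the very statement you were asked to prove.
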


Observe that $Θ$ is a $\leqslant_{\mathsf{i}}$-omnivore of $\mathcal{C}^{θ}$ and $\mathscr{K}^{\mathsf{s}}$ is $\leqslant_{\mathsf{i}}$-omnivore of $\mathcal{C}^{S}.$
Also $\mathcal{C}^{θ},\mathcal{C}^{S}$ are immersion-ideals, $\p_{\mathfrak{D}}$ and $\Delta$ are equivalent and $\{\mathcal{C}^{θ},\mathcal{C}^{S}\}$ is a $\lesssim$-antichain. 
Finally, it is easy to observe that $\big\{ \{3 \cdot K_{1}\}, \{ θ_{2}, 2\cdot P_{3}, P_{4} \} \big\}$ is a $\leqslant_{\mathsf{i}}^{*}$-antichain.

\paragraph{Cutwidth.}
The graph parameter \emph{cutwidth}, denoted by $\cw \colon \gall^{\mathsf{e}} \to \mathbb{N},$ is defined so that $\cw(G)$ is the minimum $k$ such that there exists a vertex ordering $\langle v_1, \ldots,v_{n}\rangle$ of $V(G)$ such that for every $i ∈ [n]$ there are at most $k$ edges in $G$ with one endpoint in $\langle v_1, \ldots,v_{i-1}\rangle$  and the other in $\langle v_1, \ldots,v_{i-1}\rangle.$ 

Let $\gtforest$ be the class of subcubic forests, that is the class of all forests of maximum degree at most three. 
Additionally, let $3\cdot K_{1}, θ_{2}, 2\cdot P_{3}, P_{4}$ and $K_{1,4}$ be the graphs depicted in \autoref{cutwidth_pobs}.

\begin{figure}[ht]
\begin{center}
\scalebox{1}{\includegraphics{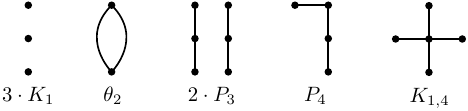}}
\end{center}
\caption{\label{cutwidth_pobs}The graphs in the three immersion-obstruction sets in $\pobs_{\leqslant_{\mathsf{i}}}(\cw).$}
\end{figure}

The results of \cite{ChungS89Graphswith,BienstockRST91Quickly,KirousisP86Searching,RobertsonS83GraphminorsI}, using the terminology introduced in this paper, can be rewritten as follows.

\begin{proposition}
\label{ishmssdwsaal}
The set  $\mathfrak{C} = \{Θ,\mathscr{K}^{\mathsf{s}},\mathscr{T}\}$ is an immersion-universal obstruction for $\cw.$ 
Moreover, it holds that $\cobs_{\leqslant_{\mathsf{i}}}(\cw) = \{\mathcal{C}^{θ}, \mathcal{C}^{S}, \gtforest\}$ and $\pobs_{\leqslant_{\mathsf{i}}}(\cw) = \big\{ \{3 \cdot K_{1}\}, \{θ_{2}, 2 \cdot P_{3}, P_{4}\}, \{θ_2, K_{1,4}\} \big\}.$
\end{proposition}

Observe that also $\mathscr{T}$ is an immersion-omnivore of the class $\gtforest$ of subcubic forests.
Also $\gtforest$ is an immersion-ideal, $\p_{\mathfrak{C}}$ and $\cw$ are equivalent, and $\{\mathcal{C}^{θ},\mathcal{C}^{S},\gtforest\}$ is a $\lesssim$-antichain. 
Finally, it is easy to observe that $\big\{\{3\cdot K_{1}\},\{θ_{2},2\cdot P_{3}, P_{4} \},\{θ_2,K_{1,4}\}\big\}$ is a $\leqslant_{\mathsf{i}}^{*}$-antichain.

\medskip
In the spirit of \autoref{relationequiv}, observe that
$$\Delta \preceq \cw$$
if and only if
$$\{\mathcal{C}^{θ},\mathcal{C}^{S},\gtforest\} \subseteq^{*} \{\mathcal{C}^{θ},\mathcal{C}^{S}\}$$
if and only if
$$\big\{ \{ 3\cdot K_{1} \}, \{ θ_{2}, 2\cdot P_{3}, P_{4} \}, \{ θ_2, K_{1,4} \} \big\}  \leqslant^{**}_{\mathsf{i}} \big\{ \{ 3\cdot K_{1} \}, \{ θ_{2}, 2\cdot P_{3}, P_{4} \} \big\}.$$

\section{Conclusion}

In this paper we introduced a parametric framework in an attempt to bridge the gap between finite obstruction characterizations for $\leqslant$-class properties and the study of graph parameters.
We remark that all concepts and results of  \autoref{prelim}, \autoref{asympotic_view}, and \autoref{general_section}
can be directly transferred to any kind of (finite) structures where some notion of ``size'' can be defined. This may certainly include directed graphs, hypergraphs, matroids, and other combinatorial structures. 

A first remark is that the definition of a $\leqslant$-universal obstruction for a parameter $\p$ does not require that $\p$ is monotone under some quasi-ordering $\leqslant$ on graphs.
The choice of such a ``$\leqslant$'' for a graph parameter so that $\leqslant$ is a well-quasi-ordering on graphs is not always easy (or even possible).
An indicative example is the graph parameter \emph{tree-partition treewidth}, denoted by $\mathsf{tpw}$ where, given a graph $G,$ $\mathsf{tpw}(G)$  is the minimum $k$ such that there is a partition $\{X_{t} :t \in V(T)\}$ of $V(G),$ indexed by the vertices of a tree $T$ such that (a) $|X_{t}| ≤ k,$ for every $t\in V(T)$ and (b) for every edge $e=\{x,y\}$ of $G,$ if $x\in X_{t}$ and $y\in X_{t'},$ then either $\{t,t'\}\in E(T)$ or $t=t'.$
In \cite{DingO96ontre}, Ding and Oporowski gave a finite $\leqslant_{\mathsf{tp}}$-universal obstruction for $\mathsf{tpw}$ where $\leqslant_{\mathsf{tp}}$ is the topological minor relation.
Interestingly, neither $\mathsf{tpw}(G)$ is topological minor-monotone (see~\cite{DingO96ontre}) nor $\leqslant_{\mathsf{tp}}$ is a wqo on all graphs.
This indicates that, for particular parameters, (finite) universal obstructions may exist even without the general requirements of \autoref{final_result} about $\p$ and $\leqslant$.

As we have seen in \autoref{chakmrkfinfoerobes}, to have a general theory that guarantees the existence of \textsl{finite} $\leqslant$-universal obstructions we need that $\leqslant$ is an $\omega^2$-well-quasi-ordering on graphs.
For the minor ($\leqslant_{\mathsf{m}}$) and the immersion ($\leqslant_{\mathsf{i}}$) relation, this is an important and wide open question in order theory \cite{Requinot2017TowardsBetter, Robertson1995BQOisHard}.
Especially for the minor relation, the greatest known advance in this direction is the celebrated result of Thomas in \cite{Thomas1989wellquasi}, stating that, for every $k \in \mathbb{N},$ $\leqslant_{\mathsf{m}}$ is $\omega^2$-wqo when we restrict to graphs of treewidth at most $k,$ i.e., $(\mathcal{G}_{\tw,k}, \leqslant_{\mathsf{m}})$ is an $\omega^2$-well-quasi-ordering.
In fact, Thomas proved that $\leqslant_{\mathsf{m}}$ is a \textsl{better-quasi-ordering} in the more general case of infinite graphs that exclude as a minor some finite planar graph.
We avoid here the technical definition of better-quasi-ordering; we only mention that it implies $\omega^2$-well-quasi-ordering.
A consequence of this is that when we restrict our universe of graphs to bounded treewidth graphs, it is indeed the case that all minor-universal-obstructions are finite.
We hope that the present work may motivate towards resolving the $\omega^2$-well-quasi-ordering conjecture for the minor relation (and perhaps also for other relations that are known to be well-quasi-orderings on all graphs).

We conclude with the remark that independently of the status of the $\omega^2$-well-quasi-ordering conjecture for the minor or immersion relation, it is still important to develop theories that can automatically produce finite obstructions sets for wide families of parameters.
An example of such a theory is the Erdős–Pósa property for minors: given a (finite) set of graphs $\mathcal{Z}$ we define the parameter $\mathsf{apex}_{\Zcal} \colon \gall \to \mathbb{N}$ such that  $\mathsf{apex}_{\Zcal}(G)$ is the minimum size of a set $S$ of vertices such that none of the graphs in ${\mathcal{Z}}$ is a minor of $G-S$ (that is the graph obtained from $G$ if we remove the vertices of $S$).
According to the results of \cite{RobertsonS86GraphminorsV} (see also \cite{van2019tight} for an optimized version), if $\mathcal{Z}$ contains a planar graph then the set $\{\langle k \cdot Z \rangle_{k\in\mathbb{N}}\mid Z\in \mathcal{Z}\}$ is a (finite) minor-universal obstruction for $\mathsf{apex}_{\Zcal}.$
Employing the parametric viewpoint introduced in this paper, \cite{PaulPTW24Delineating, PaulPTW24Obstructions} extend these results to arbitrary $\Zcal$ as a result of providing a finite minor-universal obstruction for $\Hcal$-treewidth\footnote{$\Hcal$-treewidth is a natural extension of treewidth that measures the tree decomposability of a graph into small pieces and pieces belonging to a fixed class $\Hcal$.} \cite{EibenGHK21}, for every minor-closed class $\Hcal.$
Is it possible to prove similar results about the existence of finite universal obstructions for other families of parameters and obtain finite class obstruction sets for families of class properties?
Such kind of general conditions have been investigated for obstructions of minor-closed graph classes (see e.g., \cite{SauST21apices, LagergrenA91mini, Lagergren98, AdlerGK08comp}).
It is an open challenge whether analogous combinatorial or algorithmic results may be developed for parametric obstructions, even without the a priori knowledge that they are finite.

\paragraph{Acknowledgements:} We wish to thank Sebastian Wiederrecht for all the endless discussions related to the topics and ideas of this paper.

\bibliographystyle{plainurl}


%

\end{document}